\documentclass[12pt]{amsart}
\usepackage[letterpaper,margin=1in]{geometry}
\usepackage[T1]{fontenc}
\usepackage{lmodern,microtype,mathtools,amssymb}
\usepackage{enumitem,needspace}
\usepackage{xcolor,tikz,tikz-cd}
\usepackage[colorlinks=true,linkcolor=blue!45!black,citecolor=blue!45!black,urlcolor=blue!45!black]{hyperref}
\newtheorem{theorem}{Theorem}[section]
\newtheorem{proposition}[theorem]{Proposition}
\newtheorem{lemma}[theorem]{Lemma}
\newtheorem{corollary}[theorem]{Corollary}
\theoremstyle{definition}

\newtheorem{example}[theorem]{Example}
\newtheorem{remark}[theorem]{Remark}
\newcommand{\C}{\mathbb C}
\newcommand{\Z}{\mathbb Z}
\newcommand{\R}{\mathbb R}
\newcommand{\RPP}{\operatorname{RPP}}
\newcommand{\Sym}{\operatorname{Sym}}
\newcommand{\Hom}{\operatorname{Hom}}
\newcommand{\Ext}{\operatorname{Ext}}
\newcommand{\wt}{\operatorname{wt}}
\newcommand{\row}{\operatorname{Row}}
\newcommand{\tr}{\operatorname{tr}}
\newcommand{\Fix}{\operatorname{Fix}}

\newcommand{\LL}{\mathcal L}
\newcommand{\eps}{\varepsilon}
\newcommand{\qbinom}[2]{\genfrac{[}{]}{0pt}{}{#1}{#2}}
\DeclareMathOperator{\pos}{pos}
\setlist[enumerate]{label=\textup{(\roman*)},leftmargin=*,itemsep=3pt}
\title[Quiver bases of Cartan squares]{Quiver bases of Cartan squares of minuscule representations}
\author{David B Rush}
\email{dbr@alum.mit.edu}
\date{}
\hypersetup{pdftitle={Quiver bases of Cartan squares of minuscule representations},pdfauthor={David B Rush}}
\begin{document}
\begin{abstract}
We consider the Cartan square $V^{2\lambda}$ of a minuscule representation $V^\lambda$ of a simply laced complex simple Lie algebra $\mathfrak g$.  We construct for $V^{2\lambda}$ a family of bases, which we call quiver bases, each indexed by the set $\RPP_2(P_\lambda)$ of reverse plane partitions of height two on the minuscule poset $P_{\lambda}$ of $V^{\lambda}$.  

Let $Q$ be a quiver on the Dynkin diagram of $\mathfrak g$, and let $c_Q$ be the corresponding Coxeter element.  The quiver basis $\mathcal B^Q$ is distinguished by the following property: Up to sign, the action of the Tits representative $\dot c_Q$ on $\mathcal B^Q$ lifts the action of $c_Q$, via piecewise-linear toggles, on $\RPP_2(P_\lambda)$.  This proves uniformly that, for any minuscule poset $P$, piecewise-linear Coxeter-motion and rowmotion on $\RPP_2(P)$ exhibit the cyclic sieving phenomenon.  

In type~$A$, the quiver basis for the standard orientation recovers, up to rescaling, the canonical basis, whose compatibility with the long cycle was established by Rhoades.  In other types, however, we show the canonical basis is not compatible with any Coxeter element.  
\end{abstract}
\maketitle

\section{Introduction}

\subsection{Overview}\label{subsec:overview}

The study of reverse plane partitions on minuscule posets encompasses two of the most celebrated cyclic sieving results of recent decades.  

Given a minuscule poset $P$, there exists a simply laced complex simple Lie algebra $\mathfrak{g}$ and a minuscule $\mathfrak{g}$-representation $V$ such that $P$ is the minuscule poset of $V$, meaning that the lattice of order ideals of $P$ is isomorphic to the weight lattice of $V$.  

For a positive integer $m$, let
\[
 \RPP_m(P):=\{\pi:P\to\{0,1,\ldots,m\}:\pi\text{ is order-preserving}\}
\]
be the set of reverse plane partitions (RPPs) of height $m$ on $P$.  To each Coxeter element $c$ of the Weyl group $W$ of $\mathfrak g$ corresponds a composition of piecewise-linear toggles on $\RPP_m(P)$.  Following Okada~\cite{Okada}, we call this map \emph{Coxeter-motion} by $c$ and denote it by $\gamma_c$.  Piecewise-linear rowmotion, denoted by $\row$, also acts on $\RPP_m(P)$ and is conjugate to every $\gamma_c$.\footnote{Rush--Shi~\cite{RS}, Theorem~1.3, proved the conjugacy on order ideals.  Okada~\cite{Okada}, Theorem~15, generalized this to the birational setting, and the piecewise-linear statement follows by tropicalization.}

For the rectangle $P=[k]\times[n-k]$, we may take $\mathfrak g=\mathfrak{sl}_n$ and $V=\bigwedge^k\C^n$, with $W=\mathfrak S_n$.  In 2010, Rhoades~\cite{Rhoades} proved that $\gamma_{(1\,2\,\cdots\,n)}$ exhibits cyclic sieving on $\RPP_m([k]\times[n-k])$ for all $m\geq1$.  The next year, Rush--Shi~\cite{RS} proved uniformly that, for any minuscule poset $P$, every $\gamma_c$ exhibits cyclic sieving on $\RPP_1(P)$.\footnote{These results were stated in different settings; we review their original formulations in Section~\ref{subsec:csp-history}.}

Hopkins~\cite{Hopkins} proposed the simultaneous generalization --- that for every minuscule poset $P$ and positive integer $m$, rowmotion $\row$ (equivalently, Coxeter-motion $\gamma_c$ for any $c$) on $\RPP_m(P)$ exhibits cyclic sieving with respect to the size generating function
\begin{equation}\label{eq:intro-generating-function}
 F_{P,m}(q):=\sum_{\pi\in\RPP_m(P)}q^{|\pi|},
 \qquad |\pi|:=\sum_{p\in P}\pi(p).
\end{equation}

GPT~5.6 Sol~\cite{Sol}, guided by the author, has confirmed Hopkins's conjecture is true.  But a representation-theoretic explanation that demonstrates the occurrence of cyclic sieving to be more than mere combinatorial coincidence remains to be found.  

Let $\lambda$ be the highest weight of $V$.  The irreducible representation $V^{m\lambda}$ of highest weight $m\lambda$ satisfies
\[
 \dim V^{m\lambda}=\#\RPP_m(P).
\]
Let $G$ be the simply connected Lie group with Lie algebra $\mathfrak g$.  For a convincing resolution of Hopkins's conjecture, one seeks to determine, for each Coxeter element $c \in W$, a basis $\mathcal B=\{v_\pi:\pi\in\RPP_m(P)\}$ of $V^{m\lambda}$ and an element $g_c\in G$ that permutes $\mathcal B$ up to scalars as $\gamma_c$ permutes $\RPP_m(P)$:
\[
 g_c\,\C v_\pi=\C v_{\gamma_c(\pi)}
 \qquad\bigl(\pi\in\RPP_m(P)\bigr).
\]
We would call such a $g_c$ a \emph{lift} of $\gamma_c$ up to scalars, and the cyclic sieving polynomial would then come from the character of $V^{m\lambda}$, with the scalar factors taken into account.

To consider the weights of $V$, we have implicitly fixed a Cartan subalgebra $\mathfrak{h} \subset \mathfrak{g}$.  Choose a maximal torus $T \subset G$ with Lie algebra $\mathfrak{h}$.  Then $W\simeq N_G(T)/T$, and each $w\in W$ has a Tits representative $\dot w\in N_G(T)$, defined in Section~\ref{subsec:intro-results}.  Thus, the natural choice for $g_c$ is $\dot c$.  

A natural candidate for $\mathcal{B}$ is Lusztig's canonical basis, which is supported by three historical precedents:
\begin{itemize}
 \item For every $m\geq1$, the Tits representative $\dot w_0$ of the long element of $W$ permutes the canonical basis of $V^{m \lambda}$, up to signs, and lifts the action of the Sch\"utzenberger involution $\xi$ on $\RPP_m(P)$.\footnote{Stembridge~\cite{Stembridge1994}, Theorem~4.1, proved that $\xi$ exhibits the ``$q=-1$ phenomenon'' in 1994 using the standard monomial basis and a representative of $w_0$.  The canonical-basis result follows from relating the Sch\"utzenberger involution on global bases to a quantum Weyl group operator; see Kamnitzer and Tingley~\cite{KT}, Sections~6 and~8, specialized at $q=1$.}
 \item For $\mathfrak{g} = \mathfrak{sl}_n$ and all $1 \leq k \leq n-1$, Rhoades \cite{Rhoades} proved that $\dot c$, for $c=(1\,2\,\cdots\,n)$, permutes the canonical basis of $V^{m \omega_k}$, up to signs, and lifts the action of $\gamma_c$ on $\RPP_m([k] \times [n-k])$ for every $m \geq 1$.  Here $\omega_k$ is the $k$th fundamental weight of $\mathfrak{sl}_n$, and every fundamental weight is minuscule.\footnote{Rhoades stated his result for the dual canonical basis and the long-cycle permutation matrix.  The author's re-proof via restriction of global bases applies equally to the canonical basis; see Rush~\cite{RushGlobal}, Sections~4.2--4.3 and~5.  Replacing the permutation matrix by the Tits representative changes only signs on weight vectors.}
 \item Rush--Shi \cite{RS} proved that $\dot c$ permutes the canonical basis of $V^{\lambda}$, up to signs, and lifts the action of $\gamma_c$ on $\RPP_1(P)$.\footnote{For a minuscule representation, the canonical basis is the standard weight basis, with one vector for each weight and simple lowering operators carrying one basis vector to another; see Geck~\cite{Geck}, Proposition~2.9.  Tits representatives act on this basis by the corresponding Weyl group elements, up to signs.  Rush--Shi~\cite{RS}, Theorem~1.4, identify this permutation of weights with Coxeter-motion.}
\end{itemize}

Outside type~$A$, however, the canonical basis fails at height two: 
No Tits representative $\dot c$ of a Coxeter element permutes the canonical basis of $V^{2\lambda}$ even up to nonzero scalar multiples (Theorem~\ref{thm:canonical-obstruction}).  Rescaling its vectors therefore cannot produce a compatible basis.  

In this article, we completely resolve the $m=2$ case by allowing the basis $\mathcal{B}$ to vary with the Coxeter element $c$.  Each orientation $Q$ of the Dynkin diagram of $\mathfrak{g}$ determines a Coxeter element $c_Q$, and we construct a \emph{quiver basis} $\mathcal B^Q$ of the Cartan square $V^{2\lambda}$, indexed by $\RPP_2(P)$, on which $\dot c_Q$ lifts $\gamma_{c_Q}$ up to signs.  This yields a uniform, representation-theoretic proof that $\gamma_c$ exhibits cyclic sieving on $\RPP_2(P)$ for every minuscule poset $P$ and Coxeter element $c$.  

With his 1994 ``$q = -1$'' theorem, Stembridge initiated the program of finding, for Cartan powers $V^{m \lambda}$ of minuscule representations, bases compatible with natural cyclic actions on reverse plane partitions of minuscule posets.  After more than thirty years, this work is the first advance for Coxeter-motion beyond rectangles and height one --- and is uniform in both the minuscule poset and the Coxeter element.  A compatible-basis construction for arbitrary height is still open.  

\begin{figure}[htbp]
\centering
\begin{tikzpicture}[x=1pt,y=1pt,
every node/.style={font=\small},
case/.style={draw=black!55,rounded corners=2pt,minimum height=56pt,align=center,inner sep=3pt},
generalization/.style={->,draw=black!65,semithick}]
\node at (142,41.0) {Height $1$};
\node at (289,41.0) {Height $2$};
\node at (411,41.0) {General height $m$};
\node[align=center,text width=52pt,font=\footnotesize] at (27,0) {Rectangles\\$[k]\times[n-k]$\\long cycle $c$};
\node[align=center,text width=52pt,font=\footnotesize] at (27,-88) {All minuscule\\posets $P_\lambda$\\arbitrary $c$};
\node[case,text width=158pt] (prototype) at (142,0) {Standard wedge basis\\$\bigwedge^k\C^n$\\\mbox{Reiner--Stanton--White~\cite{RSW}}};
\node[case,text width=104pt] (typeAtwo) at (289,0) {Dual canonical basis\\$V^{2\omega_k}$\\Lam~\cite{LamDimers}};
\node[case,text width=104pt] (typeAall) at (411,0) {Dual canonical basis\\$V^{m\omega_k}$\\Rhoades~\cite{Rhoades}};
\node[case,text width=158pt] (minusculeone) at (142,-88) {Weight basis\\$V^\lambda$\\Rush--Shi~\cite{RS}};
\node[case,text width=104pt,draw=blue!50!black,fill=blue!6] (minuscletwo) at (289,-88) {\textbf{Quiver bases}\\$V^{2\lambda}$\\\textbf{This paper}};
\node[case,text width=104pt,dashed] (minusculeall) at (411,-88) {$V^{m\lambda}$\\Open};
\draw[generalization] (prototype) -- (typeAtwo);
\draw[generalization] (typeAtwo) -- (typeAall);
\draw[generalization] (typeAtwo) -- (minuscletwo);
\draw[generalization] (prototype) -- (minusculeone);
\draw[generalization] (minusculeone) -- (minuscletwo);
\draw[generalization,dashed] (minuscletwo) -- (minusculeall);
\draw[generalization,dashed] (typeAall) -- (minusculeall);
\end{tikzpicture}
\caption{Compatible bases for Coxeter-motion on $\RPP_m(P)$.  Horizontal arrows extend the range of heights; vertical arrows extend the range of minuscule posets and Coxeter elements.  The highlighted box is the height-two result of this paper; the dashed box is the open problem at arbitrary height.  Following Rhoades's work on the type~$A$ dual canonical basis, Lam~\cite{Lam} gave a combinatorial realization at height two using noncrossing matchings.  Our quiver bases generalize his construction, as we show in Section~\ref{sec:comparison}.}
\label{fig:motivation}
\end{figure}

\subsection{Cyclic sieving and representation theory}\label{subsec:csp-history}

Figure~\ref{fig:motivation} delineates the settings for which we consider Coxeter-compatible bases, starting from height-one RPPs on the rectangle $[k] \times [n-k]$.  We describe this prototypical example and its complementary generalizations due to Rhoades~\cite{Rhoades} and Rush--Shi~\cite{RS} as they were originally formulated, and we explain how equivalent formulations fit into the RPP framework.  

We begin with the upper-left box of Figure~\ref{fig:motivation}.  Fix $P=[k]\times[n-k]$, with $1\leq k\leq n-1$.  Let $S$ be the set of $k$-element subsets of $[n]$.  A subset $A=\{a_1<\cdots<a_k\}\in S$ determines a partition $(a_k-k,\ldots,a_1-1)$, which determines an order ideal $I_A \subset P$, and hence a height-one RPP $\pi_A:=\mathbf1_{P\setminus I_A}$.  Under this correspondence, the long cycle $c=(1\,2\,\cdots\,n)$ on subsets becomes $\gamma_c$ on RPPs.  Furthermore, the Gaussian polynomial for $S$ coincides with the size generating polynomial for $\RPP_1(P)$:
\[
 f_S(q):=\qbinom{n}{k}_q
 =\sum_{1\leq a_1<\cdots<a_k\leq n}q^{\sum_{r=1}^k(a_r-r)}
 =\sum_{I_A \subset P} q^{|I_A|}
 =\sum_{\pi_A \in \RPP_1(P)} q^{|P|-|\pi_A|}
 =F_{P,1}(q),
\]
where the last equality follows from the self-duality of the rectangle.\footnote{Order ideals correspond directly, with their usual size grading, to order-reversing $\{0,1\}$-valued maps, but the order-preserving convention is more common in the literature.  See Appendix~\ref{app:classical-conventions} for more details on these conventions.}  At $q=1$, the polynomial counts all $k$-subsets, and, at $n$th roots of unity, Reiner--Stanton--White~\cite{RSW} showed
\begin{equation}\label{eq:csp-definition}
 \#\Fix(c^d)=f_S(\zeta_n^d)
 \qquad(d\in\Z),\qquad \zeta_n=\exp(2\pi i/n).
\end{equation}
This relationship defines the \emph{cyclic sieving phenomenon}: Root-of-unity evaluations of a generating function count fixed points of the corresponding powers of a cyclic action.

Consider the $GL_n(\C)$-representation $\bigwedge^k\C^n$, with standard basis $\mathcal B=\{e_A:A\in S\}$, where $e_A:=e_{a_1}\wedge\cdots\wedge e_{a_k}$.  Let $g \in GL_n(\C)$ be the permutation matrix given by $ge_j=e_{j+1}$ for $j<n$ and $ge_n=e_1$.  Then
\[
 ge_A=(-1)^{(k-1)\mathbf1_{n\in A}}e_{cA}.
\]
Thus, up to sign, the action of $g$ on $\mathcal{B}$ lifts the action of $c$ on $S$, and, therefore, the action of $\gamma_c$ on $\RPP_1(P)$.

The character of $\bigwedge^k \C^n$ is the elementary symmetric polynomial $e_k$ in $n$ variables, with principal specialization $e_k(1,q,\ldots,q^{n-1})=q^{\binom{k}{2}}f_S(q)$.  The eigenvalues of $g$ are $1,\zeta_n,\ldots,\zeta_n^{n-1}$, and were $g$ to permute $\mathcal B$ exactly as $c$ permutes $S$, traces of its powers would count fixed points directly.  Accounting for the wedge signs\footnote{For the details, see Appendix~\ref{app:classical-conventions}.} gives
\[
 \#\Fix(c^d)=\zeta_n^{-d\binom{k}{2}}
 \tr\!\left(g^d\mid\bigwedge^k\C^n\right)=f_S(\zeta_n^d).
\]
This demonstrates the representation-theoretic paradigm for cyclic sieving --- the crux is constructing a basis compatible with the cyclic action, after which obtaining the cyclic sieving polynomial from the representation's character, taking scalar factors into account, is straightforward.

We move to the upper-right box of Figure~\ref{fig:motivation}.  For $m\geq1$, the set $S_m$ of semistandard Young tableaux of shape $(m^k)$ with entries in $[n]$ generalizes $S$ (identify a one-column tableau with its entries to see $S_1 \simeq S$).  For $T\in S_m$, regard each column $T^{(s)}$ as a subset in $S$, and set $I_s:=I_{T^{(s)}}$ for $1 \leq s \leq m$.  Since the entries of $T$ weakly increase across rows, $I_1\subseteq\cdots\subseteq I_m$, and this chain determines an RPP
\[
 \pi_T:=\sum_{s=1}^m\mathbf1_{P\setminus I_s}\in\RPP_m(P).
\]
The map $S_m \rightarrow \RPP_m(P)$ given by $T\mapsto\pi_T$ has an inverse, which recovers the order ideals via $I_s=\{p:\pi(p)<s\}$ and then the tableau $T$ by writing the subset corresponding to $I_s$ in column $s$.  Under this bijection, Sch\"utzenberger's jeu-de-taquin promotion\footnote{We follow Rhoades's convention, in which promotion removes the entries equal to $n$, slides the holes northwest, increases the remaining entries by one, and fills the holes with $1$'s.  At $m=1$, this is the action of $c$ on $S$.} $j$ on $S_m$ becomes $\gamma_c$ on $\RPP_m(P)$; cf. Hopkins~\cite{HopkinsPromotion}, Appendix~A.

The irreducible polynomial $\operatorname{GL}_n(\C)$-representation of highest weight $(m^k)$, which we denote by $S^{(m^k)}\C^n$, generalizes $\bigwedge^k \C^n$.  Rhoades~\cite{Rhoades} proved that $g$ acts on its dual canonical basis $\{F_T:T\in S_m\}$ by promotion (up to signs).\footnote{The restriction of $S^{(m^k)} \C^n$ to $\operatorname{SL}_n(\C)$ is $V^{m\omega_k}$.  The lifting property holds with $\dot c \in \operatorname{SL}_n(\C)$ in place of $g$; see Section~\ref{subsec:intro-results}.}  Explicitly, if $T \in S_m$ has content $(T_1,\ldots,T_n)$, then
\[
 gF_T=(-1)^{(k-1)T_n}F_{jT}.
\]
The character of $S^{(m^k)} \C^n$ is the Schur polynomial $s_{(m^k)}$ in $n$ variables, and $j$ has order $n$ and exhibits cyclic sieving on $S_m$ with respect to the polynomial
\begin{equation}\label{eq:rhoades-polynomial}
	f_{S_m}(q):=q^{-m\binom{k}{2}}s_{(m^k)}(1,q,\ldots,q^{n-1})
	=F_{P,m}(q),
\end{equation}
where the last equality again follows from the self-duality of the rectangle.\footnote{See Appendix~\ref{app:classical-conventions} for the details.}  

Subsequently, Shen and Weng~\cite{ShenWeng} reproved Rhoades's cyclic sieving result by showing a corresponding lifting result for the theta basis, and the author~\cite{RushGlobal} obtained a concise proof of Rhoades's lifting result by restriction of global bases.\footnote{The result is stated in~\cite{RushGlobal} for Kashiwara's upper global basis.  One advantage of the author's approach is that the same argument applies equally to the lower and upper global bases, which coincide with Lusztig's canonical and dual canonical bases, respectively; see Grojnowski and Lusztig~\cite{GrojnowskiLusztig}.  Throughout this article, we refer to the $q=1$ specialization of these bases.}

We turn to the lower-left box of Figure~\ref{fig:motivation}.  For any minuscule poset $P$, Rush--Shi~\cite{RS} work with its lattice of order ideals $J(P)$, which maps bijectively to $\RPP_1(P)$ by $I\mapsto\mathbf1_{P\setminus I}$.  To toggle an order ideal $I \in J(P)$ at an element $p \in P$ means to return the symmetric difference $I \triangle \{ p\}$ if $I \triangle \{ p\} \in J(P)$ and $I$ otherwise.\footnote{Under the bijection $J(P) \rightarrow \RPP_1(P)$, toggling at $p$ becomes piecewise-linear toggling at $p$; thus, piecewise-linear toggling on $\RPP_m(P)$ generalizes toggling on $J(P)$.}  Both rowmotion and Coxeter-motion can be expressed as compositions of toggles, with the toggle at each $p\in P$ applied exactly once.  For rowmotion, the toggles are applied along a linear extension of $P$ in reverse order.  For a Coxeter element $c=s_{i_r}\cdots s_{i_1}$, Coxeter-motion applies the toggles at elements labeled $i_1$, then those at elements labeled $i_2$, and so on.\footnote{The elements of $P$ are naturally labeled by the vertices of the Dynkin diagram of $\mathfrak{g}$; following Stembridge~\cite{StembridgeMin}, we refer to this as the heap labeling and to the labeled poset $P$ as a minuscule heap.}  Rush--Shi prove that these actions are conjugate and exhibit cyclic sieving at $h$th roots of unity, where $h$ is the Coxeter number of $\mathfrak{g}$, with respect to the polynomial
\[
 J(P;q):=\sum_{I\in J(P)}q^{|I|}=\sum_{\pi \in \RPP_1(P)} q^{|P| - |\pi|} = F_{P,1}(q).
\]

Their proof, although formulated in the language of fully commutative Weyl group elements, is in keeping with the representation-theoretic paradigm.  Let $\wt\colon J(P)\to W\lambda$ be the bijection between the lattice of order ideals of $P$ and the weight lattice of the corresponding minuscule representation $V=V^\lambda$.  In this language, Theorem~1.4 of Rush--Shi~\cite{RS} asserts that $\wt$ intertwines Coxeter-motion and the action of $c$:
\[
 c\wt(I)=\wt(\gamma_c I).
\]
Since the weight spaces of $V^\lambda$ are one-dimensional, any representative of $c$ lifts Coxeter-motion on the weight spaces; on the standard weight basis (which coincides with the canonical and dual canonical bases), the Tits representative $\dot c$ lifts $\gamma_c$ up to signs.  

In this article, we focus on the center box in the bottom row of Figure~\ref{fig:motivation}, which sits at the intersection of the Rhoades~\cite{Rhoades} and Rush--Shi~\cite{RS} stories.  As in the tableau correspondence, an RPP determines nested threshold ideals; as in the height-one case, those ideals determine weights of $V^\lambda$.  Thus, at height two, each RPP corresponds to a pair of weights.  In the next subsection, we construct a vector of $V^{2\lambda}$ from each pair, in accordance with a quiver on the Dynkin diagram of $\mathfrak{g}$ --- and explain how these vectors form a quiver basis compatible with Coxeter-motion.

\subsection{Quiver bases}\label{subsec:intro-construction}

We construct the quiver bases inside the symmetric square of a minuscule representation.  Fix a dominant minuscule weight $\lambda$ of a simply laced complex simple Lie algebra $\mathfrak g$.  Let $V^{\lambda}$ be the irreducible $\mathfrak{g}$-representation with highest weight $\lambda$; let $P=P_\lambda$ be the minuscule poset of $V^{\lambda}$, and let $\{u_\mu:\mu\in W\lambda\}$ be its standard weight basis.  

Note that $V^{2\lambda}$ is the $\mathfrak{g}$-submodule of $\Sym^2V^\lambda$ generated by $u_\lambda^2$ and contains every square $u_\mu^2$.  A reverse plane partition $\pi\in\RPP_2(P)$ determines two nested threshold ideals
\[
 A:=\{p:\pi(p)<1\}\subseteq B:=\{p:\pi(p)<2\},
\]
and hence two weights $\mu:=\wt(A)$ and $\nu:=\wt(B)$ such that $\mu \geq \nu$ in the root order.\footnote{The root order on the weight lattice of $\mathfrak g$ is the partial order generated by the covering relations $\mu\gtrdot\nu$ for all $\mu, \nu$ such that $\mu-\nu$ is a simple root.}  Write $\mu-\nu=\sum_i d_i\alpha_i$ in the basis of simple roots.  

Fix an orientation $Q$ of the Dynkin diagram of $\mathfrak g$.  A representation of $Q$ with dimension vector $d = (d_i)$ assigns $\C^{d_i}$ to vertex $i$ and a linear map to each arrow.  These representations form the affine space
\[
 \operatorname{Rep}_Q(d):=\prod_{i\to j}\operatorname{Hom}(\C^{d_i},\C^{d_j}),
\]
on which $\prod_i\operatorname{GL}_{d_i}(\C)$ acts by changes of bases at the vertices.  By Gabriel's theorem~\cite{Gabriel}, this action has finitely many orbits.  Since $\operatorname{Rep}_Q(d)$ is irreducible, exactly one such orbit is Zariski-open.\footnote{By irreducibility, that there are finitely many orbits implies one orbit is dense; since orbits are locally closed, the dense orbit is open.  Two open orbits would intersect, so there can only be one.}  We show that the indecomposable summands of a \textit{generic} representation --- that is, a representation in the unique open orbit --- have as their dimension vectors distinct, pairwise-orthogonal positive roots.  Write $R_Q(\mu-\nu)$ for this set of roots, whose sum is $\mu-\nu$.

Write $R:=R_Q(\mu-\nu)$.  For every subset $S\subseteq R$, the expression
\[
 \mu_S:=\mu-\sum_{\beta\in S}\beta
\]
is a weight of $V^\lambda$.  These weights form the vertices of a cube $C$, with $\mu_\varnothing=\mu$ and $\mu_R=\nu$ as opposite vertices.

For each $\beta\in R$, choose a lowering operator $F_\beta \in \mathfrak g_{-\beta}$.\footnote{The root space $\mathfrak g_{-\beta}$ is the one-dimensional space of weight $-\beta$ in the adjoint representation.  Any nonzero element is a lowering operator, meaning that it sends a vector of weight $\eta$ to one of weight $\eta-\beta$ or to zero.}  On the symmetric square, it acts as a derivation: $F_\beta(vw)=(F_\beta v)w+v(F_\beta w)$.  The orthogonality of the roots in $R$ implies that the operators commute, so the product
\[
 z_\pi^Q:=\left(\prod_{\beta\in R}F_\beta\right)u_\mu^2\in V^{2\lambda}
\]
is well defined and has weight $2\mu-\sum_{\beta\in R}\beta=\mu+\nu$.  

By the Leibniz rule, $z_\pi^Q$ is a linear combination of products $u_{\mu_S}u_{\mu_{R\setminus S}}$ associated to pairs of opposite vertices of $C$.  The coefficient of $u_\mu u_\nu$ is nonzero, and we define $v_\pi^Q$ as $z_\pi^Q$ divided by this coefficient so that the coefficient of $u_{\mu} u_{\nu}$ in $v_\pi^Q$ is $1$.  When $\mu=\nu$, the set $R$ is empty, and $v_\pi^Q=u_\mu^2$.

Set $\mathcal B^Q := \{v_\pi^Q : \pi \in \RPP_2(P)\}$.  We prove that $\mathcal B^Q$ has the following properties:
\begin{itemize}
\item $\mathcal B^Q$ is a \emph{weight basis} of $V^{2\lambda}$, with each $v_\pi^Q$ of weight $\mu+\nu$.
\item Every $v_\pi^Q$ is a linear combination of distinct products $u_\eta u_\theta$ (with $\eta,\theta\in W\lambda$), each appearing with coefficient $1$ or $-1$.
\item Every integer linear combination of the products $u_\eta u_\theta$ that lies in $V^{2\lambda}$ also has integer coefficients in $\mathcal B^Q$.  In other words, $\mathcal B^Q$ is a $\Z$-basis of the lattice
\[
 V^{2\lambda}\cap\Sym^2V^\lambda_\Z,
 \qquad
 V^\lambda_\Z:=\bigoplus_{\eta\in W\lambda}\Z u_\eta.
\]
\end{itemize}

In fact, for each pair of weights $\mu\geq\nu$ of $V^\lambda$, we may choose any set of pairwise-orthogonal positive roots summing to $\mu-\nu$, independently of the choices for other pairs.  Applying the same construction with these sets in place of the $R_Q(\mu-\nu)$ still gives a basis with all the preceding properties (Theorem~\ref{thm:arbitrary-basis}).

What distinguishes the quiver choices is their compatibility with the generators of the Tits group.  At a \emph{source} or \emph{sink} --- a vertex whose incident arrows all point out or all point in --- the Tits representative of the corresponding simple reflection carries $\mathcal B^Q$, up to signs, to the quiver basis for the orientation obtained by reversing those arrows.  The product of these representatives in an order compatible with $Q$ takes $\mathcal B^Q$ back to itself --- and lifts Coxeter-motion, again up to signs.  

\subsection{Compatibility with Coxeter-motion}\label{subsec:intro-results}
\label{subsec:intro-compatibility}

Let $G$ be the simply connected Lie group with Lie algebra $\mathfrak g$, and $T\subset G$ a maximal torus with Lie algebra $\mathfrak h$.  Let $e_i\in\mathfrak g_{\alpha_i}$ and $f_i\in\mathfrak g_{-\alpha_i}$ be the Chevalley generators defining the standard weight basis $\{u_\mu\}$, and set
\[
 n_i:=\exp(e_i)\exp(-f_i)\exp(e_i)\in N_G(T).
\]
These elements generate the \emph{Tits group}.\footnote{See Tits~\cite{Tits} for the original construction, and Adams--Vogan~\cite[\S5]{AdamsVogan} and Adams--He~\cite[\S2]{AdamsHe} for modern accounts.  The generators satisfy $n_i^2=\alpha_i^\vee(-1)$.}  Under the quotient map $N_G(T)\rightarrow N_G(T)/T \simeq W$, each $n_i$ maps to the corresponding simple reflection $s_i$.  Furthermore, the $n_i$ obey the braid relations.  Thus, for any reduced expression $w=s_{i_t}\cdots s_{i_1}$, the product $\dot w=n_{i_t}\cdots n_{i_1} \in N_G(T)$ is a well-defined representative of $w$.\footnote{For $\mathfrak g=\mathfrak{sl}_n$, the standard Chevalley generators and $c=(1\,2\,\cdots\,n)=s_1\cdots s_{n-1}$ give
\[
 \dot c=g\,\operatorname{diag}(-1,\ldots,-1,1),
\]
where $g$ is the long-cycle permutation matrix used in Section~\ref{subsec:csp-history}.  The actions of $\dot c$ and $g$ on weight vectors differ only by signs.}

Denote by $\tau_i$ the composition of the toggles at all elements of $P$ labeled $i$, whether ordinary toggles on $J(P)$ or piecewise-linear toggles on $\RPP_m(P)$.  For a Coxeter element $c = s_{i_r} \cdots s_{i_1}$, Coxeter-motion by $c$ is given by $\gamma_c = \tau_{i_r} \cdots \tau_{i_1}$.  Rush--Shi proved that the bijection $\wt\colon J(P)\to W\lambda$ intertwines $\gamma_c$ and $c$ by showing that it intertwines each label toggle $\tau_i$ with the corresponding simple reflection $s_i$~\cite[Theorem~6.3(ii)]{RS}:
\[
 s_i\wt(I) = \wt(\tau_i I)\qquad(I\in J(P)).
\]
Consequently, the Tits generators act on the standard weight basis\footnote{We index the weight basis by order ideals via $u_I:=u_{\wt(I)}$.} by
\[
 n_i u_I = \pm u_{\tau_i I}\qquad(I \in J(P)).
\]

We prove a height-two counterpart of this identity.  Our local lifting theorem establishes not that a Tits generator permutes a quiver basis up to signs, but rather that, at a source or sink, it carries one quiver basis to another, up to signs, and lifts the piecewise-linear label toggle on the underlying RPPs.  

For a source or sink $i$ of $Q$, let $\xi_i(Q)$ be the orientation obtained by reversing the arrows incident to $i$.  When there is no ambiguity, we write $Q^i$ for $\xi_i(Q)$.  

\begin{theorem}[Local lifting]\label{thm:intro-local}
For every source or sink $i$ of $Q$, 
\begin{equation}\label{eq:intro-local}
 n_i v^Q_\pi=\pm v^{Q^i}_{\tau_i(\pi)} \qquad (\pi \in \RPP_2(P)).
\end{equation}
The sign is negative precisely when exactly one of the two endpoint pairings $\langle\mu,\alpha_i^\vee\rangle$ and $\langle\nu,\alpha_i^\vee\rangle$ equals $1$, and positive otherwise.
\end{theorem}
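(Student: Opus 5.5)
Conjugating $z^Q_\pi$ by $n_i$ should give, up to sign, the vector $z^{Q^i}_{\tau_i(\pi)}$. Since $n_i$ acts on $\mathfrak g$ by an automorphism lifting $s_i$, we have $n_i F_\beta n_i^{-1}\in\mathfrak g_{-s_i\beta}$. Also $n_i u_\mu^2=(\pm u_{s_i\mu})^2=u_{s_i\mu}^2$. Hence
\[
 n_i z^Q_\pi=\Bigl(\prod_{\beta\in R}\operatorname{Ad}(n_i)F_\beta\Bigr)u_{s_i\mu}^2 .
\]
This is again a product of commuting lowering/raising operators applied to a square. The plan is to compare the root set $s_iR$, with $R=R_Q(\mu-\nu)$, with $R_{Q^i}(\mu'-\nu')$, where $\mu'=\wt(\tau_i A)$ and $\nu'=\wt(\tau_i B)$ are the endpoint weights of $\tau_i(\pi)$. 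By Rush--Shi, the threshold ideals of $\tau_i(\pi)$ are not simply $\tau_i A$ and $\tau_i B$. A piecewise-linear toggle on $\{0,1,2\}$-valued RPPs toggles the two thresholds, but with the roles of the two levels possibly interchanged.

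The first step is a combinatorial lemma. Since $\mu,\nu$ are minuscule weights, $\langle\mu,\alpha_i^\vee\rangle,\langle\nu,\alpha_i^\vee\rangle\in\{-1,0,1\}$. For each of the nine cases I would compute the endpoints of $\tau_i(\pi)$ and show they are $\{s_i\mu,s_i\nu\}$ when $\langle\mu-\nu,\alpha_i^\vee\rangle$ permits. The exceptional case is $\langle\mu,\alpha_i^\vee\rangle=-1$, $\langle\nu,\alpha_i^\vee\rangle=1$, where elements labeled $i$ carry value $1$ in both levels. There the toggle fixes those entries, the endpoints become $\mu\pm\alpha_i$ and $\nu\mp\alpha_i$, and the endpoint difference changes by $\pm 2\alpha_i$ instead of being reflected.

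The second step is representation-theoretic and uses the BGP reflection functor $\Phi_i^\pm$ at the source or sink $i$. Reflection functors send indecomposables of $Q$ other than $S_i$ bijectively to indecomposables of $Q^i$, with dimension vectors transformed by $s_i$. They also preserve genericity, since they are equivalences between the subcategories avoiding $S_i$ and respect orbit closures via $\Ext$-vanishing: a representation is generic iff $\Ext^1(M,M)=0$. Consequently:
\begin{itemize}
 \item If $\alpha_i\notin R$, then $R_{Q^i}(s_i(\mu-\nu))=s_iR$.
 \item If $\alpha_i\in R$, the simple summand is deleted or reinserted.
\end{itemize}
Matching this against the case analysis should give $R_{Q^i}(\mu'-\nu')=s_iR$ up to adding or removing $\alpha_i$ and negating. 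Negated roots correspond to raising operators. In that case I would rewrite $n_iz$ by using $u_{s_i\mu}^2$ versus a lowered partner: apply $\operatorname{Ad}(n_i)F_{\alpha_i}\propto e_i$, and use that $e_i$ applied to the appropriate square reproduces the other endpoint's square times a factor. The key identity is $e_if_i u_\eta^2=2u_\eta^2$-type computations on the $\mathfrak{sl}_2$-string of length at most $3$ in $\Sym^2$.

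The third step handles normalization and sign. Since $n_i z^Q_\pi$ is a nonzero multiple of $z^{Q^i}_{\tau_i\pi}$, it suffices to track the coefficient of $u_{\mu'}u_{\nu'}$. Note $n_iu_\eta=\epsilon(\eta)u_{s_i\eta}$, where $\epsilon(\eta)=-1$ exactly when $\langle\eta,\alpha_i^\vee\rangle=1$ under the convention $n_i=\exp(e_i)\exp(-f_i)\exp(e_i)$; one checks $n_iu_\eta=-f_iu_\eta$ for such $\eta$. Then the coefficient of $u_{s_i\mu}u_{s_i\nu}$ in $n_iv_\pi^Q$ is $\epsilon(\mu)\epsilon(\nu)$, which gives the stated sign rule whenever the endpoints are simply reflected. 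The exceptional cases need separate checking.

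The main obstacle is the exceptional case where $\alpha_i$ enters or leaves $R$ and the endpoints are not simply reflected. Here I would first reduce to the $\mathfrak{sl}_2$-computation on the factor $\prod_{\beta\neq\alpha_i}$, which commutes with $e_i,f_i$ by orthogonality. Then I would verify directly in $\Sym^2$ of a two-dimensional $\mathfrak{sl}_2$-module that the normalized vectors $u_+u_-$-type combinations are exchanged with the correct sign.
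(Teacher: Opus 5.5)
Your overall plan matches the paper's: conjugate the root operators by $n_i$, transport the generic decomposition with BGP reflection functors, handle a simple summand by an $\mathfrak{sl}_2$ computation, and read the sign off the coefficient of $u_{\mu'}u_{\nu'}$. However, the case split it rests on is wrong.

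The case you single out, $\langle\mu,\alpha_i^\vee\rangle=-1$ and $\langle\nu,\alpha_i^\vee\rangle=1$, is an ordinary one. There $A$ has a removable element of label $i$ and $B$ an addable one, so the toggled ideals stay nested and $\tau_i(\mu,\nu)=(\mu+\alpha_i,\nu-\alpha_i)=(s_i\mu,s_i\nu)$. The change $\delta\mapsto\delta+2\alpha_i$ \emph{is} the reflection $s_i\delta$. Moreover every $\beta\in R$ satisfies $(\mu,\beta)=1$, since orthogonality gives $(\delta,\beta)=2$. So in your case $\alpha_i\notin R$, and no raising operator ever appears.

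The genuinely exceptional case has the opposite pairings: $(\mu,\alpha_i)=1$, $(\nu,\alpha_i)=-1$, $d_i=1$, and $d_j=0$ for all $j\sim i$. Then a single element $p$ with $\pi(p)=1$ is addable to $A$ and removable from $B$. Its toggle interval is $[0,2]$, so $\tau_i$ fixes $\pi$ and the endpoints do not move at all. Your sentence asserting both that the toggle fixes those entries and that the endpoints shift by $\pm\alpha_i$ contradicts itself. Also, $|R|=\tfrac12(\delta,\delta)$ is never changed by $\tau_i$, so nothing is added or removed. The correct transport statement is $R_{Q^i}(\mu'-\nu')=\{\pos(s_i\beta):\beta\in R\}$.

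What is missing is the lemma that aligns the two sides. For $i$ a source or sink, you need $\alpha_i\in R_Q(\mu-\nu)$ if and only if $\delta$ satisfies the conditions just listed (equivalently $s_i\delta\notin L_+$, equivalently $\tau_i\pi=\pi$). ``Matching against the case analysis'' cannot supply this, because orthogonality alone does not imply it.

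Take type $A_3$ with $\lambda=\omega_2$, the equioriented quiver $1\to2\to3$, and the pair $(\lambda,w_0\lambda)$. Then $R_Q(\delta)=\{\alpha_2,\alpha_1+\alpha_2+\alpha_3\}$, the left cube in Figure~\ref{fig:A3-cubes}. So $\alpha_2\in R_Q(\delta)$ even though $d_2=2$ and $\tau_2$ moves $\pi$. Conjugation by $n_2$ then turns $F_{\alpha_2}$ into a raising operator and does not yield a cube vector for the reflected pair. This is excluded only because vertex $2$ is neither a source nor a sink.

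The paper proves the lemma as follows. At a source $\dim\Hom(M,S_i)=(\dim M)_i$, and at a sink $\dim\Hom(S_i,M)=(\dim M)_i$. Distinct summands of the generic representation admit no nonzero homomorphisms. Hence if $S_i$ is a summand, every other summand vanishes at $i$, and, being orthogonal to $\alpha_i$, also at the neighbors of $i$. Conversely, those coordinate conditions split off $S_i$.

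With this lemma the two cases close as follows.
\begin{enumerate}
\item In the moving case all $s_i\beta$ are positive, and $n_iz^Q_\pi$ is directly the cube vector for $(s_i\mu,s_i\nu;s_iR)$, with $s_iR=R_{Q^i}(s_i\delta)$.
\item In the fixed case $R_{Q^i}(\delta)=R$, and your $\mathfrak{sl}_2$ reduction applies. The other root operators produce an $i$-highest-weight vector of weight $2$, and $F_{\alpha_i}$ carries it to the middle weight. There $n_i$ acts by $-1=\eps_i(\mu)\eps_i(\nu)$, so the vector is fixed up to sign rather than exchanged with another.
\end{enumerate}
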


For an orientation $Q$ of the Dynkin diagram of $\mathfrak g$, let $\mathcal I = (i_1, \ldots, i_r)$ be an ordering of the vertices consistent with $Q$, i.e., such that, for each directed edge $j \rightarrow j'$ in $Q$, $j$ precedes $j'$ in $\mathcal I$.  Set $c_Q := s_{i_r} \cdots s_{i_1}$.  Different consistent orderings correspond to different reduced expressions for the same Coxeter element, so the association $Q \mapsto c_Q$ is well defined.\footnote{Two orderings yield the same Coxeter element if and only if they give the same relative order to every pair of vertices joined by an edge; see Shi~\cite[Proposition~1.3]{ShiCoxeter}.}  In fact, it is bijective, for from the reduced expression $s_{i_r} \cdots s_{i_1}$, we recover the ordering $\mathcal I$, which is consistent with precisely one orientation $Q$.  

Given a Coxeter element $c = s_{i_r} \cdots s_{i_1}$, we consider the orientation $Q$ such that $c_Q = c$.  Starting from $Q$, reversing the arrows at the vertices in the order $i_1,\ldots,i_r$ reverses each edge twice, so the final orientation is again $Q$.  Furthermore, each vertex is a source immediately before its arrows are reversed: We see inductively that $i_k$ is a source of $\xi_{i_{k-1}} \cdots \xi_{i_1}(Q)$ for all $1 \leq k \leq r$.  

Applying the local lifting theorem at each step, we find that $n_{i_k}$ carries the quiver basis for $\xi_{i_{k-1}} \cdots \xi_{i_1}(Q)$ to the quiver basis for $\xi_{i_k} \cdots \xi_{i_1}(Q)$, up to signs, and lifts the label toggle $\tau_{i_k}$.  Therefore, $\dot c_Q=n_{i_r}\cdots n_{i_1}$ preserves $\mathcal B^Q$ up to signs and lifts Coxeter-motion on RPPs, as stated in our main theorem.\footnote{The first assertion of the theorem follows directly from local lifting.  The second assertion, that the basis vectors can be rescaled so that $\dot c_Q$ lifts $\gamma_{c_Q}$ up to a single scalar, requires an additional argument; see the end of Section~\ref{sec:coxeter}.}

\begin{theorem}[Coxeter lifting]\label{thm:intro-coxeter}
For every orientation $Q$, the Tits representative $\dot c_Q$ lifts Coxeter-motion on $\mathcal B^Q$ up to signs:
\[
 \dot c_Q v^Q_\pi=\pm v^Q_{\gamma_{c_Q}(\pi)}
 \qquad(\pi\in\RPP_2(P)).
\]
Let $h$ be the Coxeter number of $\mathfrak{g}$, and set $\zeta:=\exp(2\pi i/h)$.
Then there exists a basis $\{\widehat v_\pi^Q:\pi\in\RPP_2(P)\}$, with each $\widehat v_\pi^Q$ a nonzero scalar multiple of $v_\pi^Q$, such that
\begin{equation}\label{eq:intro-coxeter}
 \zeta^{|P|}\dot c_Q\,\widehat v^Q_\pi
 =\widehat v^Q_{\gamma_{c_Q}(\pi)}
 \qquad(\pi\in\RPP_2(P)).
\end{equation}
\end{theorem}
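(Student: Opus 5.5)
The first assertion follows from Theorem~\ref{thm:intro-local}, as the discussion before the statement indicates. I would fix a consistent ordering $(i_1,\ldots,i_r)$ for $Q$ and set $Q_k:=\xi_{i_k}\cdots\xi_{i_1}(Q)$, so that $Q_0=Q_r=Q$ and $i_k$ is a source of $Q_{k-1}$. I would apply \eqref{eq:intro-local} at each step, $n_{i_k}v^{Q_{k-1}}_{\pi}=\pm v^{Q_k}_{\tau_{i_k}\pi}$, and compose. This gives $\dot c_Q v^Q_\pi=\epsilon(\pi)\,v^Q_{\gamma_{c_Q}\pi}$, where $\epsilon(\pi)\in\{\pm1\}$ is the product of the $r$ local signs supplied by Theorem~\ref{thm:intro-local}.

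For the rescaling, write $M:=\zeta^{|P|}\dot c_Q$. In the basis $\mathcal B^Q$, $M$ is a monomial matrix whose nonzero entries lie in $\zeta^{|P|}\{\pm1\}$. On a $\gamma_{c_Q}$-orbit $\pi_0,\pi_1=\gamma_{c_Q}\pi_0,\ldots,\pi_{\ell-1}$ of length $\ell$, we have $M^\ell v_{\pi_0}=\kappa\, v_{\pi_0}$ with
\[
 \kappa=\zeta^{|P|\ell}\prod_{j<\ell}\epsilon(\pi_j).
\]
The standard argument then applies: if $\kappa=1$, set $\widehat v_{\pi_0}:=v_{\pi_0}$ and $\widehat v_{\pi_{j+1}}:=M\widehat v_{\pi_j}$. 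This closes up consistently, and \eqref{eq:intro-coxeter} holds on the orbit. So the whole second assertion reduces to the orbit identity
\[
 \prod_{j<\ell}\epsilon(\pi_j)=\zeta^{-|P|\ell}
\]
for every orbit.

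I expect this orbit identity to be the main obstacle. I would handle it by passing to a single product $u_\eta u_\theta$. The Tits element $\dot c_Q^{\,\ell}$ fixes $\mathbb C v_{\pi_0}$, so $\dot c_Q^{\,\ell}v_{\pi_0}=\prod_j\epsilon(\pi_j)\,v_{\pi_0}$. Moreover $\dot c_Q^{\,\ell}$ acts on $\Sym^2V^\lambda$ monomially in the products $u_\eta u_\theta$, since each $n_i u_\eta=\pm u_{s_i\eta}$. Because $v_{\pi_0}$ has $\pm1$ coefficients on distinct products, the scalar $\prod_j\epsilon(\pi_j)$ can be read off from any term $u_\eta u_\theta$ of $v_{\pi_0}$ together with its image. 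In particular, $c^\ell$ permutes the support of $v_{\pi_0}$, and one can track the term $u_\mu u_\nu$.

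Two inputs would then be needed.
\begin{enumerate}
 \item The height-one scalars. For $\eta$ with $c^{\ell'}\eta=\eta$, I need the scalar by which $\dot c^{\,\ell'}$ acts on $u_\eta$. I would compute it via Rush--Shi's intertwining $s_i\wt(I)=\wt(\tau_iI)$ and the explicit signs $n_iu_\eta=(-1)^{\langle\eta,\alpha_i^\vee\rangle}$-type rules. Alternatively, I would use Kostant's theorem that $\dot c$ is conjugate to $\rho^\vee(\zeta)$ times the central element $\dot c^{\,h}=(2\rho^\vee)(-1)$. The latter acts trivially on $V^{2\lambda}$, and $\langle\lambda-w_0\lambda,\rho^\vee\rangle=|P|$. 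This should produce the factor $\zeta^{|P|}$ as the "half" of the two tensor factors.
 \item A combinatorial check. One must confirm that when $\gamma_{c_Q}^\ell$ fixes $\pi$, either $c^\ell$ fixes $\mu$ and $\nu$ separately, or it swaps them compatibly with the sign bookkeeping.
\end{enumerate}

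If the term-tracking approach becomes too messy, the fallback is a trace argument. Using Kostant, compute $\operatorname{tr}(\dot c_Q^{\,d}\mid V^{2\lambda})$ for all $d$, compare it with the fixed-point data of the monomial matrix, and deduce the orbit scalars inductively on $\ell\mid h$ by Möbius inversion over divisors.
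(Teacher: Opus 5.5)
\paragraph{Verdict.} Your first assertion and your reduction of the rescaling to the orbit identity $\zeta^{|P|\ell}\prod_{j<\ell}\epsilon(\pi_j)=1$ are correct. The proposed proof of that identity, however, has a genuine gap.

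\paragraph{Item (ii) is false.} $\gamma_c^\ell\pi=\pi$ does not force $c^\ell\{\mu,\nu\}=\{\mu,\nu\}$. At a wall step of Lemma~\ref{lem:cutoff}, the pair is fixed, but $s_i$ sends $\{\mu,\nu\}$ to the opposite pair $\{\mu-\alpha_i,\nu+\alpha_i\}$. This is a different pair as soon as the cube has dimension at least two, so $c^\ell$ merely permutes the opposite-vertex pairs of the cube of $\pi$.

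For a concrete instance, take type $A_5$ with $\lambda=\omega_3$ and $Q$ equioriented, so $c\colon i\mapsto i-1\pmod 6$. The pair $(134,256)$ has matching $\{(1,2),(3,6),(4,5)\}$ and orbit
\[
(134,256)\to(123,456)\to(125,346)\to(134,256).
\]
Yet $c^3$ sends $\{134,256\}$ to $\{146,235\}$, another opposite pair of the same cube.

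Write $\dot c^\ell u_\eta=\sigma(\eta)u_{c^\ell\eta}$. Then tracking $u_\mu u_\nu$ only gives $\prod_j\epsilon(\pi_j)=\sigma(\mu)\sigma(\nu)a$, where $a=\pm1$ is the coefficient of $u_{c^\ell\mu}u_{c^\ell\nu}$ in $v_\pi$. Here $\sigma$ is a normalization-dependent sign between distinct weight vectors, not an eigenvalue, and neither $\sigma$ nor $a$ is controlled by your inputs.

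\paragraph{Input (i) and the fallback.} Input (i) does not follow from Kostant's theorem alone. Conjugacy gives the eigenvalues of $\dot c$, not the scalar of $\dot c^{k}$ on a particular $c^k$-fixed vector $u_\eta$.

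The trace fallback is circular. The trace of the monomial matrix is $\sum_{|O|\mid d}|O|\kappa_O^{d/|O|}$, with cycle scalars $\kappa_O$. Forcing every $\kappa_O=1$ requires already knowing $\#\Fix(\gamma_c^d)=F_{P,2}(\zeta^d)$, which is Corollary~\ref{cor:intro-csp} and is deduced from this theorem.

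\paragraph{The missing idea.} You need to control the signs globally rather than orbit by orbit. The paper does this in three steps.
\begin{enumerate}
\item It re-signs the weight basis by the Euler cocycle $X_\alpha u^Q_\mu=(-1)^{E_Q(\alpha,\mu-\lambda)}u^Q_{\mu+\alpha}$ (Lemma~\ref{lem:cocycle}). Because $E_Q(\beta,\gamma)=0$ for distinct $\beta,\gamma\in R_Q(\mu-\nu)$, every opposite-vertex product then appears in the cube vector with the same coefficient.
\item The matrix identity $c=-E^{-T}E$ gives $\dot c\,u^Q_\mu=(-1)^{(\lambda,\mu-\lambda)}u^Q_{c\mu}$. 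Rephasing by $e^{\pi i(y,\mu)}$ with $y-c^{-1}y=-\lambda$ makes $e^{\pi i(\lambda,\lambda)}\dot c$ an honest permutation of weight vectors (Lemma~\ref{lem:phase}).
\item The sweep $c$ carries the vertex set of the cube of $\pi$ onto that of the cube of $\gamma_c(\pi)$, wall steps included. Hence $e^{2\pi i(\lambda,\lambda)}\dot c=\zeta^{|P|}\dot c$ permutes the unsigned opposite-pair sums exactly.
\end{enumerate}
This handles every orbit at once, with no need to know how $c^\ell$ acts within a cube.
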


The scalar $\zeta^{|P|}$ accounts for the grading shift between the character of $V^{2\lambda}$ and $F_{P,2}(q)$.  Taking traces and using Kostant's Coxeter-element theorem\footnote{For Kostant's theorem, see Prasad~\cite{Prasad}.} together with the conjugacy of Coxeter-motion and rowmotion, we obtain the following cyclic sieving result.  

\begin{corollary}\label{cor:intro-csp}
For every minuscule poset $P$ and every Coxeter element $c$, piecewise-linear Coxeter-motion and rowmotion on $\RPP_2(P)$ have order $h$ and exhibit the cyclic sieving phenomenon with respect to $F_{P,2}(q)$.  In particular,
\[
 \#\Fix(\gamma_c^d)=\#\Fix(\row^d)=F_{P,2}(\zeta^d)
 \qquad(d\in\Z).
\]
\end{corollary}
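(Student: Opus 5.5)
The corollary will follow from Theorem~\ref{thm:intro-coxeter} by a trace computation; I would then use conjugacy to pass from $\gamma_c$ to $\row$.

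\textbf{Step 1: traces of powers.} Fix $c$ and let $Q$ be the orientation with $c_Q=c$. Set $g:=\zeta^{|P|}\dot c_Q$. By~\eqref{eq:intro-coxeter}, $g$ permutes the basis $\{\widehat v^Q_\pi\}$ exactly as $\gamma_c$ permutes $\RPP_2(P)$. Hence, for every $d\in\Z$,
\[
 \#\Fix(\gamma_c^d)=\tr\bigl(g^d\mid V^{2\lambda}\bigr)=\zeta^{d|P|}\tr\bigl(\dot c^{\,d}\mid V^{2\lambda}\bigr).
\]

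\textbf{Step 2: evaluate the character.} By Kostant's theorem, $\dot c$ is conjugate in $G$ to the principal element $\exp(2\pi i\rho^\vee/h)$, up to the ambiguity of a central element. One has to check this normalization for the Tits representative; I will return to it in Step 4. Granting it,
\[
 \tr(\dot c^{\,d}\mid V^{2\lambda})=\sum_{\eta}\dim V^{2\lambda}_\eta\,\zeta^{d\langle\eta,\rho^\vee\rangle}.
\]
Each basis vector $v_\pi$ has weight $\mu+\nu=\wt(A)+\wt(B)$. Adding a box to an order ideal subtracts a simple root, and $\langle\alpha_i,\rho^\vee\rangle=1$, so $\langle\wt(I),\rho^\vee\rangle=\langle\lambda,\rho^\vee\rangle-|I|$. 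Also $|A|+|B|=2|P|-|\pi|$, because $p\notin A$ contributes $1$ to $\pi(p)$ and $p\notin B$ contributes another $1$. Therefore
\[
 \langle\mu+\nu,\rho^\vee\rangle=2\langle\lambda,\rho^\vee\rangle-2|P|+|\pi|.
\]
Since $\lambda$ is minuscule, the lowest weight is $w_0\lambda=\lambda-\sum_{p\in P}\alpha_{\mathrm{label}(p)}$, and pairing with $\rho^\vee$ gives $\langle\lambda,\rho^\vee\rangle=|P|/2$. So the exponent is $\langle\mu+\nu,\rho^\vee\rangle=|\pi|-|P|$. Multiplying by $\zeta^{d|P|}$ cancels the shift, and
\[
 \#\Fix(\gamma_c^d)=\sum_\pi\zeta^{d|\pi|}=F_{P,2}(\zeta^d).
\]
The same step gives the order of $\gamma_c$. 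The action of $g$ on the $\widehat v_\pi$ is diagonalizable with $h$th-root-of-unity eigenvalues, so $g^h$ acts trivially and $\gamma_c^h=\mathrm{id}$. The order is exactly $h$ because $\gamma_c$ already has order $h$ on the height-one RPPs $2\cdot\mathbf 1_{P\setminus I}$, by Rush--Shi.

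\textbf{Step 3: rowmotion.} $\row$ is conjugate to $\gamma_c$ (Okada, tropicalized), so $\#\Fix(\row^d)=\#\Fix(\gamma_c^d)$ and $\row$ also has order $h$.

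\textbf{Step 4: the main obstacle.} The delicate point is the central ambiguity in Kostant's theorem. A Tits representative is conjugate to the principal element only up to a central element $z$, which acts on $V^{2\lambda}$ by a scalar $\chi_{2\lambda}(z)$. I would handle this in one of two ways. First option: show the ambiguity is already absorbed in the normalization behind~\eqref{eq:intro-coxeter}, since the existence of the $\widehat v$ forces the eigenvalues of $g$ to be exactly the multiset coming from $\zeta^{d|P|}\dot c^{\,d}$. Second option: evaluate at $d=1$, where some fixed point of $\gamma_c$ can be exhibited or the trace computed directly, and determine the scalar from that. Concretely, I would compare with the height-one case of Rush--Shi, where the same Tits element acts on $V^\lambda$. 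The central character of $V^{2\lambda}$ is the square of that of $V^\lambda$, so the height-one normalization should fix the sign and scalar.
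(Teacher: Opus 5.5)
Your overall route is the paper's: take the trace of $\zeta^{|P|}\dot c_Q$ on $\{\widehat v^Q_\pi\}$, apply Kostant's theorem, use the grading identity $\langle\mu+\nu,\rho^\vee\rangle=|\pi|-|P|$, and pass to rowmotion by conjugacy. Steps 1--3 are sound.
\begin{itemize}
\item Citing Okada's birational conjugacy plus tropicalization is legitimate; the introduction says as much. The paper instead proves the conjugacy directly in the piecewise-linear toggle group, using only involutivity and commutation of toggles.
\item Your doubling argument for exact order $h$ works. Piecewise-linear toggles commute with $\pi\mapsto2\pi$, and $c$ acts faithfully on $W\lambda$, which spans $\mathfrak h^*$. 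The paper uses Hopkins's rowmotion orbit $\pi_j$ instead.
\end{itemize}

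The genuine gap is Step~4, which you leave open, and neither of your options closes it.
\begin{itemize}
\item Option 1 is circular. That $g$ is a permutation matrix in the basis $\{\widehat v^Q_\pi\}$ constrains its spectrum only through the cycle type of $\gamma_c$, which is exactly what you are computing. It therefore cannot tell you which root of unity $\chi_{2\lambda}(z)$ multiplies the multiset $\{\zeta^{|\pi|}\}$.
\item Option 2 has nothing to evaluate at $d=1$. A Coxeter element has no eigenvalue $1$ on $\mathfrak h^*$, so $c$ fixes no weight of $V^\lambda$, and both sides of the height-one trace identity vanish. At height two, a fixed point of $\gamma_c$ would have to have weight $0$, and none exists in Example~\ref{ex:A3}.
\item Comparing central characters with height one merely trades the unknown $\chi_{2\lambda}(z)$ for the unknown $\chi_\lambda(z)^2$.
\end{itemize}

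The missing idea is that the central ambiguity disappears outright, because all representatives of $c$ in $N_G(T)$ are $T$-conjugate. Write $c(t):=\dot c\,t\,\dot c^{-1}$. For $t\in T$ one has $t\dot c\,t^{-1}=t\,c(t)^{-1}\dot c$. The homomorphism $t\mapsto t\,c(t)^{-1}$ of $T$ is surjective, because its differential $1-c$ is invertible. Hence $z\dot c$ is conjugate to $\dot c$ for every $z\in Z(G)\subset T$.

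Now suppose lifting Kostant's adjoint-group conjugacy to $G$ gives $x\dot c\,x^{-1}=zs$, with $s=\exp(2\pi i\rho^\vee/h)$ and $z$ central. Then $\dot c$ is conjugate to $z^{-1}\dot c$, and hence to $s$. With this in place, your Step~2 computation goes through verbatim, including the eigenvalue claim behind $\gamma_c^h=1$, and the proof becomes the paper's.
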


\subsection{Comparison with other bases}\label{subsec:intro-comparison}

In type~$A$, for the orientation $Q:1\to2\to\cdots\to n-1$, the quiver basis $\mathcal{B}^Q$ agrees with the canonical basis up to rescaling.  By duality, $\mathcal{B}^Q$ recovers the construction of Lam~\cite{Lam}, who realized the dual canonical basis via noncrossing matchings.

Fix $\pi\in\RPP_2([k]\times[n-k])$, and let $\mu\geq\nu$ be the weights determined by its two threshold ideals (cf. Section~\ref{subsec:intro-construction}).  Let $T \in S_2$ be the tableau such that $\pi_T = \pi$, and note that the columns $T^{(1)}$ and $T^{(2)}$ of $T$, viewed as $\mathfrak{sl}_n$ weights, coincide with $\mu$ and $\nu$, respectively.  Lam~\cite{Lam} associates to $T$ a noncrossing matching $M(T)$ between the sets $T^{(1)}\setminus T^{(2)}$ and $T^{(2)}\setminus T^{(1)}$.  We show that
\[
 \{\alpha_p+\cdots+\alpha_{q-1}:\{p,q\}\in M(T), \ p<q\}=R_Q(\mu-\nu).
\]
Section~\ref{sec:comparison} proves the identification, and Figure~\ref{fig:lam-matching} illustrates the tableau, matching, and corresponding cube.

For $k=1$ or $k=n-1$, every weight space of $V^{2\omega_k}$ is one-dimensional, so every Tits representative $\dot w \in N_G(T)$ permutes the canonical basis up to scalars.  For $2\leq k\leq n-2$, on the other hand, the long cycle and its inverse are the only Coxeter elements whose Tits representatives permute the canonical basis of $V^{2\omega_k}$ up to scalars (Proposition~\ref{prop:type-a-canonical-symmetries}).  For every other Coxeter element, the corresponding quiver basis cannot be obtained by rescaling the canonical basis, as illustrated in Example~\ref{ex:A3}.  

In types~$D$ and~$E$, no Tits representative of any Coxeter element permutes the canonical basis of $V^{2\lambda}$ up to scalars.  The same holds for the dual canonical basis of $(V^{2\lambda})^*$ (Theorem~\ref{thm:canonical-obstruction}).  Thus, in these types, bases compatible with Coxeter-motion, such as our quiver bases, must differ from the canonical basis by more than rescaling.

Garver, Patrias, and Thomas~\cite[Theorem~1.6]{GPT} present a bijection between RPPs of unrestricted height on a minuscule poset $P$ and isomorphism classes of quiver representations whose indecomposable summands are nonzero at the vertex corresponding to the minuscule weight $\lambda$.  They extend their construction to relate formal RPPs (with entries such as $\infty - a$) to root-category objects.  Under this relationship, given a source vertex $i$ of a quiver $Q$, the piecewise-linear label toggle $\tau_i$ corresponds to applying the derived reflection functor $\widetilde R_i^{-}\colon D^b(\operatorname{rep}Q)\longrightarrow D^b(\operatorname{rep}\xi_i(Q))$~\cite[Theorem~1.10]{GPT}.\footnote{For a representation $X$ of $Q$, ordinary reflection at a source $i$ sends $X_i$ to the cokernel of $X_i\to\bigoplus_{i\to j}X_j$, with the induced maps along the reversed arrows.  The derived version retains simple summands supported only at $i$ as shifted simple representations.  The root category is $D^b(\operatorname{rep}Q)/[2]$, and the derived reflection functor descends to the corresponding root categories~\cite[Sections~3.1 and~5.1--5.3]{GPT}.}  

This correspondence is the linchpin of their proposed proof that $\gamma_c^h=1$ at arbitrary height, which would then follow from the order $h$ of Auslander--Reiten translation on isomorphism classes in the root category.  Unfortunately, the passage from formal entries involving $\infty$ to bounded RPPs relies on~\cite[Lemma~5.7]{GPT}, which does not hold as stated.\footnote{In their order-reversing convention, take the diamond $a<b,c<d$ with entries $(\rho(a),\rho(b),\rho(c),\rho(d))=(\infty,2,\infty-2,0)$.  Substituting $\infty=3$ gives the valid bounded RPP $(3,2,1,0)$, meeting the lemma's hypothesis.  At the maximum $d$, however, toggling first gives $\min(2,\infty-2)=2$, whereas substituting first and then toggling gives $\min(2,1)=1$.  Moreover, substituting after the formal toggle gives $(3,2,1,2)$, which is not order-reversing.}  

The periodicity identity nonetheless follows independently from tropicalizing the birational result of Okada~\cite[Theorem~3(a)]{Okada}.  Okada's proof proceeds case by case, so our proof that Coxeter-motion and rowmotion have order $h$ on $\RPP_2(P)$ is the first uniform one.  Furthermore, even if the Garver--Patrias--Thomas argument were repaired, it would establish uniformly that $\gamma_c$ has order $h$ at arbitrary height, but not that it exhibits cyclic sieving.  

For every height $m\geq1$, Strayer~\cite[Theorem~9.5]{Strayer} constructs combinatorially a weight basis of $V^{m\lambda}$ indexed by $\RPP_m(P)$.  His basis is not, however, compatible with Coxeter-motion in general: At height two, the Tits representative in Example~\ref{ex:D4} permutes our quiver basis up to signs but does not permute Strayer's basis up to scalars.  For general $m$, constructing bases with the compatibility proved herein remains open.  

Section~\ref{sec:background} reviews the background, including the minuscule weight and toggle conventions.  Sections~\ref{sec:bases} and~\ref{sec:reflections} construct the quiver bases and prove the local lifting theorem; Sections~\ref{sec:coxeter} and~\ref{sec:csp} establish the main theorem and cyclic sieving corollary.  Sections~\ref{sec:comparison} and~\ref{sec:canonical-obstruction} show the canonical-basis comparison and obstruction.  Section~\ref{sec:higher-powers} discusses the open problem for general $m$.  The classical convention checks and the longer obstruction calculations appear in the appendices.

\section{Minuscule heaps and piecewise-linear toggles}\label{sec:background}

Let $\mathfrak g$ be a simply laced complex simple Lie algebra and $G$ its simply connected complex Lie group.  Fix a Cartan subalgebra $\mathfrak h\subset\mathfrak g$, and let $T\subset G$ be the maximal torus with Lie algebra $\mathfrak h$.  Let $\Phi\subset\mathfrak h^*$ be the root system of $(\mathfrak g,\mathfrak h)$.  Choose positive roots $\Phi^+$ and index the corresponding simple roots as $\{\alpha_i:i\in I\}$.  Let $W:=N_G(T)/T$ be the Weyl group, with simple reflections $s_i$ for $i\in I$ and longest element $w_0$.  Set $r:=|I|$, and let $h$ be the Coxeter number.  Write $\omega_i$ for the corresponding fundamental weights, and $\rho$ and $\rho^\vee$ for the half-sums of positive roots and positive coroots.  We normalize the invariant inner product by $(\alpha,\alpha)=2$ for every root.  Set $L:=\Z\Phi$ and $L_+:=\sum_i\Z_{\geq0}\alpha_i$.  We use root order on weights: $\mu\geq\nu$ means $\mu-\nu\in L_+$.  The notation $i\sim j$ means that $i$ and $j$ are adjacent vertices of the Dynkin diagram.

\subsection{The minuscule representation}
A nonzero dominant weight $\lambda$ is minuscule if the weights of the irreducible representation $V^\lambda$ form a single Weyl group orbit.  Fix such a weight $\lambda$ and set $\Omega:=W\lambda$.  Each weight space is one-dimensional, and
\[
 (\mu,\beta)\in\{-1,0,1\}
 \qquad(\mu\in\Omega,\ \beta\in\Phi).
\]
Choose Chevalley generators $e_i\in\mathfrak g_{\alpha_i}$ and $f_i\in\mathfrak g_{-\alpha_i}$, and fix a highest-weight vector $u_\lambda$.  The simple lowering operators determine the standard weight basis $\{u_\mu:\mu\in\Omega\}$, normalized by
\[
 f_i u_\mu=u_{\mu-\alpha_i}\quad\text{if }(\mu,\alpha_i)=1,
 \qquad
 e_i u_\mu=u_{\mu+\alpha_i}\quad\text{if }(\mu,\alpha_i)=-1,
\]
with all other actions zero.  This is the classical specialization of the global basis of a minuscule representation; see Geck~\cite{Geck}, Definition~2.2 and Proposition~2.9.  Denote its integral span by $V_\Z^\lambda$.  The Tits representative $n_i:=\exp(e_i)\exp(-f_i)\exp(e_i)$ acts on this basis by
\begin{equation}\label{eq:standard-tits}
 n_i u_\mu=\eps_i(\mu)u_{s_i\mu},\qquad
 \eps_i(\mu):=
 \begin{cases}
 -1&(\mu,\alpha_i)=1,\\
 1&(\mu,\alpha_i)=0\text{ or }-1.
 \end{cases}
\end{equation}
Indeed, this is the usual action on each two-dimensional simple-root string, and $n_i$ fixes the weight vectors on which both $e_i$ and $f_i$ vanish.

The Cartan square $V^{2\lambda}$ is the submodule of $\Sym^2V^\lambda$ generated by $u_\lambda^2$.  Every square $u_\mu^2$, for $\mu\in\Omega$, belongs to it: a product of Tits representatives taking $\lambda$ to $\mu$ takes $u_\lambda^2$ to $u_\mu^2$.  We will construct the quiver basis by applying root operators to these squares.

\subsection{The heap and its thresholds}
Proctor~\cite{Proctor} shows that $\Omega$, ordered oppositely to root order, is a distributive lattice with minimal element $\lambda$.  Let $P$ be its poset of join-irreducibles.  Under the resulting identification $J(P)\cong\Omega$, adjoining an element $p$ to an order ideal subtracts a simple root from its weight.  This root depends only on $p$; its index is the label $\ell(p)\in I$.  Thus the weight of an ideal $A$ is
\begin{equation}\label{eq:ideal-weight}
 \wt(A)=\lambda-\sum_{p\in A}\alpha_{\ell(p)}.
\end{equation}
The labeled poset is the \emph{minuscule heap} of Stembridge~\cite{StembridgeFC,StembridgeMin}; its underlying poset is the minuscule poset.  Our order convention follows Rush and Wang~\cite{RW} and Rush~\cite{RushCDE}.  In particular,
\[
 A\subseteq B\quad\Longleftrightarrow\quad\wt(A)\geq\wt(B).
\]

The full ideal has weight $w_0\lambda$.  Since $\langle\alpha_i,\rho^\vee\rangle=1$ for each simple root, equation~\eqref{eq:ideal-weight} gives
\begin{equation}\label{eq:heap-size}
 |P|=\langle\lambda-w_0\lambda,\rho^\vee\rangle
 =\langle2\lambda,\rho^\vee\rangle,
\end{equation}
where the second equality uses $w_0\rho^\vee=-\rho^\vee$.

A reverse plane partition of height two is an order-preserving map $\pi:P\to\{0,1,2\}$.  Its threshold ideals
\[
 A:=\{p\in P:\pi(p)<1\},\qquad
 B:=\{p\in P:\pi(p)<2\}
\]
satisfy $A\subseteq B$.  Conversely, these ideals recover $\pi$ by the formula $\pi=2-\mathbf1_A-\mathbf1_B$.  Thus $\RPP_2(P)$ is in bijection with the comparable pairs
\[
 (\mu,\nu):=(\wt(A),\wt(B)),\qquad\mu\geq\nu,
\]
of weights of $V^\lambda$.  For such a pair, write
\begin{equation}\label{eq:mu-delta}
 \xi:=\mu+\nu,\qquad\delta:=\mu-\nu\in L_+.
\end{equation}
The vector indexed by $\pi$ will have weight $\xi$; its construction will depend on the difference $\delta$.

The number of comparable pairs with sum $\xi$ is the dimension of the corresponding weight space of the Cartan square:
\begin{equation}\label{eq:dimension}
 \dim V^{2\lambda}_\xi
 =\#\{(\mu,\nu)\in\Omega^2:\mu\geq\nu,\ \mu+\nu=\xi\}.
\end{equation}
Indeed, the degree-two standard monomials in the minuscule homogeneous coordinate ring form a basis of $(V^{2\lambda})^*$, indexed by comparable pairs $(\mu,\nu)$ and having weights $-(\mu+\nu)$; see Seshadri~\cite{Seshadri}.  Dualizing gives~\eqref{eq:dimension}.  Consequently, to construct a basis it will suffice to find linearly independent vectors with these indices and weights.

\subsection{Toggles}
For a positive integer $m$ and $p\in P$, the piecewise-linear toggle $t_p$ on $\RPP_m(P)$ fixes all entries except $\pi(p)$ and reflects that entry in the interval permitted by its neighbors:
\begin{equation}\label{eq:toggle}
 (t_p\pi)(p)
 :=\max_{q\lessdot p}\pi(q)+\min_{p\lessdot q}\pi(q)-\pi(p).
\end{equation}
The maximum is $0$ when $p$ is minimal, and the minimum is $m$ when $p$ is maximal.  Thus $t_p$ is an involution of $\RPP_m(P)$.  Two toggles commute unless their elements are joined by a cover.  Every cover in a minuscule heap joins elements with adjacent Dynkin labels.  Elements with the same label form a chain and are never joined by a cover, so the \emph{label toggle}
\[
 \tau_i:=\prod_{\ell(p)=i}t_p
\]
is independent of the order of its factors.  For a Coxeter element $c=s_{i_r}\cdots s_{i_1}$, we define piecewise-linear Coxeter-motion by $\gamma_c:=\tau_{i_r}\cdots\tau_{i_1}$, with the rightmost factor acting first.  Piecewise-linear rowmotion applies the individual toggles from maximal to minimal elements, in any reverse linear extension of $P$.

Although $\RPP_2(P)$ may also be identified with the order ideals of $P\times[2]$, the action here is piecewise-linear rowmotion on $P$.  It differs from ordinary order-ideal rowmotion on $P\times[2]$: for a one-element $P$, these actions have orders two and three, respectively.

At height one, Rush and Shi~\cite{RS}, Theorem~6.3(ii), identify the label toggle $\tau_i$ with the simple reflection $s_i$ under~\eqref{eq:ideal-weight}.  We now return to height two, where the two threshold ideals need to remain nested.  The following formula describes how this requirement changes the action on the two weights.

\begin{lemma}\label{lem:cutoff}
For a comparable pair $(\mu,\nu)$ with difference $\delta$,
\begin{equation}\label{eq:cutoff}
 \tau_i(\mu,\nu)=
 \begin{cases}
 (s_i\mu,s_i\nu)&s_i\delta\in L_+,\\
 (\mu,\nu)&s_i\delta\notin L_+.
 \end{cases}
\end{equation}
Writing $\delta=\sum_jd_j\alpha_j$, the second case occurs precisely when
\begin{equation}\label{eq:wall}
 d_i=1,\qquad d_j=0\ (j\sim i),\qquad
 (\mu,\alpha_i)=1,\quad(\nu,\alpha_i)=-1.
\end{equation}
In this case $(\xi,\alpha_i)=0$.
\end{lemma}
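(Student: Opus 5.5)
The plan is to reduce the height-two label toggle to two height-one label toggles, one on each threshold ideal. This works except in one local configuration, which the plan identifies with the wall condition~\eqref{eq:wall}. Throughout I use that $\pi=2-\mathbf1_A-\mathbf1_B$ and that $A\subseteq B\iff\wt(A)\geq\wt(B)$.

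\emph{Step 1: one toggle versus two.} Fix $p$ and let $\mathrm{lo}=\max_{q\lessdot p}\pi(q)$ and $\mathrm{hi}=\min_{p\lessdot q}\pi(q)$. I will check directly from~\eqref{eq:toggle} that $t_p\pi=2-\mathbf1_{t_pA}-\mathbf1_{t_pB}$ in every case but one. Here $t_p$ on the right denotes the ordinary toggle of an order ideal. The one exceptional case is $\mathrm{lo}=0$, $\mathrm{hi}=2$ and $\pi(p)=1$.
\begin{itemize}
\item In that case $p\in B\setminus A$, every lower cover of $p$ lies in $A$, and no upper cover lies in $B$.
\item So $t_pA=A\cup\{p\}$ and $t_pB=B\setminus\{p\}$, which are no longer nested.
\item The piecewise-linear toggle instead fixes the value $1$.
\end{itemize}
In all remaining cases the interval $[\mathrm{lo},\mathrm{hi}]$ and the value $\pi(p)$ are such that reflecting equals toggling each threshold separately. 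This is a finite check over $\mathrm{lo}\le\pi(p)\le\mathrm{hi}$ in $\{0,1,2\}$. Call $p$ a \emph{conflict} if it is in the exceptional configuration.

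\emph{Step 2: passing to weights.} The $i$-labelled elements form a chain with no covers among them, so $\tau_i$ acts elementwise. By Rush--Shi~\cite[Theorem~6.3(ii)]{RS}, $\wt(\tau_iA)=s_i\mu$ and $\wt(\tau_iB)=s_i\nu$.

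\emph{Case of no conflict.} Then $\tau_i(A,B)=(\tau_iA,\tau_iB)$, which is a nested pair. Hence $s_i\mu\geq s_i\nu$, that is, $s_i\delta\in L_+$.

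\emph{Case of a conflict at $p$.} Since $p$ is addable to $A$, we get $(\mu,\alpha_i)=1$. In a minuscule orbit this means $A$ has exactly one addable $i$-element and no removable one. Likewise $p$ removable from $B$ gives $(\nu,\alpha_i)=-1$, so $B$ has exactly one removable $i$-element and no addable one. Therefore every other $i$-labelled element is fixed by both toggles, and $\tau_i(\mu,\nu)=(\mu,\nu)$.

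It remains to show $s_i\delta\notin L_+$ in this case. Suppose instead $s_i\delta\in L_+$. Then $s_i\mu\geq s_i\nu$, so the ideals $\tau_iA$ and $\tau_iB$ with these weights would be nested. This is impossible, because $p\in\tau_iA\setminus\tau_iB$.

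Together the two cases give~\eqref{eq:cutoff}, with the second case holding exactly when a conflict occurs.

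\emph{Step 3: identifying the wall.} Write $s_i\delta=\delta-(\delta,\alpha_i)\alpha_i$. Its $\alpha_i$-coefficient is $\sum_{j\sim i}d_j-d_i$, and its other coefficients are unchanged, so $s_i\delta\notin L_+$ if and only if $\sum_{j\sim i}d_j<d_i$.

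Since $(\mu,\alpha_i),(\nu,\alpha_i)\in\{-1,0,1\}$, we have $(\delta,\alpha_i)=2d_i-\sum_{j\sim i}d_j\le2$. Now suppose $\sum_{j\sim i}d_j<d_i$.
\begin{itemize}
\item Then $(\delta,\alpha_i)>d_i\ge1$, so $(\delta,\alpha_i)=2$.
\item This forces $(\mu,\alpha_i)=1$ and $(\nu,\alpha_i)=-1$.
\item The coefficient then equals $d_i-2<0$, so $d_i=1$ and hence all $d_j=0$ for $j\sim i$.
\end{itemize}
Conversely, \eqref{eq:wall} gives coefficient $-1$. Finally, $(\xi,\alpha_i)=1+(-1)=0$.

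The main obstacle is Step~1. It requires a careful but elementary case check that reflection within $\{0,1,2\}$ agrees with thresholded toggling except at the conflict configuration. Everything after that follows from the height-one theorem and the minuscule pairing bound.
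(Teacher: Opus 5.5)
Your proposal is correct and follows essentially the same route as the paper. Like the paper, you compare the piecewise-linear toggle with the two height-one toggles of the threshold ideals, isolate the single conflict ($p\in B\setminus A$ added to $A$ and removed from $B$, where the entry $1$ is fixed), pass to weights via Rush--Shi and the equivalence of nestedness with root order, and read off the wall from the $\alpha_i$-coefficient $\sum_{j\sim i}d_j-d_i$ of $s_i\delta$ together with $(\delta,\alpha_i)\le 2$. One small wording fix: at the conflict the numerical identity $t_p\pi=2-\mathbf1_{t_pA}-\mathbf1_{t_pB}$ still holds (both sides equal $1$ at $p$); what fails is that $(t_pA,t_pB)$ is not nested and so is not the threshold pair of $t_p\pi$, which is the form of the statement your later steps actually use.
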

\begin{proof}
Let $A\subseteq B$ be the threshold ideals of $\pi$.  Each ideal has at most one toggleable element of label $i$.  Apply the height-one label toggle to both ideals.  If an element absent from both ideals is addable to $A$, it is also addable to $B$.  Dually, if an element present in both is removable from $B$, it is also removable from $A$.  Thus the two toggled ideals can fail to be nested only when the same element $p\in B\setminus A$ is added to $A$ and removed from $B$.

In this case $\pi(p)=1$.  Addability to $A$ makes every lower neighbor have value $0$, and removability from $B$ makes every upper neighbor have value $2$.  The interval in~\eqref{eq:toggle} is therefore $[0,2]$, so the piecewise-linear toggle fixes $\pi(p)$.  The other entries of label $i$ are fixed as well, since neither threshold ideal has another toggleable element of that label.  In every other case, the changes in threshold membership prescribed by~\eqref{eq:toggle} agree with the height-one toggles.  Since the reflected ideals are nested exactly when $s_i\mu\geq s_i\nu$, this proves~\eqref{eq:cutoff}.

Reflection changes only the coefficient of $\alpha_i$ in $\delta$.  Its new coefficient is
\[
 d_i-(\delta,\alpha_i),\qquad
 (\delta,\alpha_i)=2d_i-\sum_{j\sim i}d_j\leq2,
\]
where the inequality follows from the minuscule bounds for $\mu$ and $\nu$.  If $d_i=0$, the new coefficient is $\sum_{j\sim i}d_j\geq0$; if $d_i\geq2$, it is again nonnegative.  A negative coefficient therefore forces $d_i=1$ and $(\delta,\alpha_i)=2$.  The displayed formula then gives $d_j=0$ for every neighbor $j$ of $i$, and the minuscule bounds give $(\mu,\alpha_i)=1$ and $(\nu,\alpha_i)=-1$.  Conversely, these conditions make the new coefficient $-1$.  They also give $(\xi,\alpha_i)=(\mu+\nu,\alpha_i)=0$.
\end{proof}

\begin{remark}\label{rem:toggle-braid}
At height two, label toggles need not satisfy the braid relations.  For $V^{\omega_2}$ in type~$A_3$, the minuscule heap is a diamond whose bottom, left, right, and top elements have labels $2,1,3,2$.  Write a filling in that order, and set
\[
 a:=(0,1,1,2),\qquad b:=(1,1,1,1).
\]
The toggle $\tau_1$ fixes both fillings, whereas $\tau_2$ interchanges them.  Hence
\[
 \tau_1\tau_2\tau_1(a)=b
 \ne a=\tau_2\tau_1\tau_2(a).
\]
Nevertheless, reduced expressions for a fixed Coxeter element differ only by interchanging adjacent factors whose Dynkin vertices are nonadjacent.  The corresponding label toggles commute, so $\gamma_c$ is well defined.
\end{remark}

\section{Constructing the quiver bases}\label{sec:bases}

For each comparable pair $\mu\geq\nu$, we will construct a vector of weight $\xi=\mu+\nu$ by applying commuting root operators to $u_\mu^2$.  This requires a decomposition of $\delta=\mu-\nu$ into positive orthogonal roots.  An orientation $Q$ supplies such a decomposition through the indecomposable summands of a generic quiver representation.

\subsection{The generic decomposition}
Fix an orientation $Q$ of the Dynkin diagram.  A representation $M$ of $Q$ assigns a finite-dimensional complex vector space $M_i$ to each vertex and a linear map $M_i\to M_j$ to each arrow $i\to j$.  Its dimension vector is $\dim M:=\sum_i(\dim M_i)\alpha_i$.  For vectors $x=\sum_i x_i\alpha_i$ and $y=\sum_i y_i\alpha_i$, the Euler form of $Q$ is
\begin{equation}\label{eq:euler}
 E_Q(x,y):=\sum_i x_iy_i-\sum_{i\longrightarrow j}x_iy_j,
 \qquad E_Q(x,y)+E_Q(y,x)=(x,y).
\end{equation}
For representations $M,N$ of $Q$, it computes the difference between the dimensions of the homomorphism and extension spaces:
\begin{equation}\label{eq:hom-ext}
 E_Q(\dim M,\dim N)
 =\dim\Hom(M,N)-\dim\Ext^1(M,N).
\end{equation}

By Gabriel's theorem~\cite{Gabriel}, the indecomposable representations of $Q$ are indexed by the positive roots: write $M_\beta$ for the indecomposable of dimension vector $\beta$.  Each has one-dimensional endomorphism space and no self-extensions.  For a fixed dimension vector $d=\sum_i d_i\alpha_i$, the representation variety
\[
 \operatorname{Rep}_Q(d)
 :=\prod_{i\longrightarrow j}\Hom(\C^{d_i},\C^{d_j})
\]
is irreducible.  The group $\prod_i\operatorname{GL}_{d_i}(\C)$ acts by changes of bases at the vertices, and Gabriel's theorem implies that this action has finitely many orbits.  Exactly one orbit is open.  A representation in this orbit is \emph{generic}.

For a representation $M$ of dimension vector $d$, write $\mathcal O_M$ for its orbit.  Its stabilizer has dimension $\dim\Hom(M,M)$, so~\eqref{eq:hom-ext} gives
\[
 \operatorname{codim}_{\operatorname{Rep}_Q(d)}\mathcal O_M
 =\dim\Hom(M,M)-E_Q(d,d)
 =\dim\Ext^1(M,M).
\]
Thus a representation is generic exactly when it is \emph{rigid}, meaning that $\Ext^1(M,M)=0$.

For a difference $\delta=\mu-\nu$ of comparable weights in $\Omega$, rigidity forces more than the vanishing of extension spaces: no indecomposable summand repeats, and there are no nonzero homomorphisms between distinct summands.

\begin{proposition}\label{prop:generic}
Let $\mu,\nu\in\Omega$ with $\mu\geq\nu$.  The generic representation of $Q$ of dimension vector $\delta=\mu-\nu$ is a direct sum of pairwise nonisomorphic indecomposables, each occurring once.  For distinct summands $M_\beta$ and $M_\gamma$,
\[
 \Hom(M_\beta,M_\gamma)=0,\qquad
 \Ext^1(M_\beta,M_\gamma)=0.
\]
The set $R_Q(\delta)$ of their dimension vectors satisfies
\begin{equation}\label{eq:recording}
 \delta=\sum_{\beta\in R_Q(\delta)}\beta,
 \qquad
 E_Q(\beta,\gamma)=
 \begin{cases}
 1&\beta=\gamma,\\
 0&\beta\ne\gamma.
 \end{cases}
\end{equation}
In particular, distinct roots in $R_Q(\delta)$ are orthogonal.
\end{proposition}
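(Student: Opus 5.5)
The plan is to combine the rigidity of the generic representation with the minuscule bound $(\mu,\beta)\in\{-1,0,1\}$. The idea is to compute $(\delta,\delta)$ in two ways, which forces every cross term between summands to vanish.

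\emph{Step 1: rigidity and Gabriel's theorem.} Let $M$ be the generic representation of dimension vector $\delta$. By the discussion preceding the proposition, $M$ is rigid, so $\Ext^1(M,M)=0$. By Gabriel's theorem we can write
\[
 M\cong M_{\beta_1}\oplus\cdots\oplus M_{\beta_N},
\]
where the positive roots $\beta_k$ are listed with multiplicity and $\delta=\sum_k\beta_k$. Since $\Ext^1$ is additive, $\Ext^1(M_{\beta_k},M_{\beta_l})=0$ for all $k,l$, including $k=l$ and pairs of isomorphic copies.

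\emph{Step 2: cross terms are nonnegative.} For $k\ne l$, combine~\eqref{eq:euler} with~\eqref{eq:hom-ext} and the vanishing of $\Ext^1$. This gives
\[
 (\beta_k,\beta_l)=\dim\Hom(M_{\beta_k},M_{\beta_l})+\dim\Hom(M_{\beta_l},M_{\beta_k})\ \geq\ 0 .
\]

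\emph{Step 3: the minuscule bound.} Expanding $(\delta,\delta)$ over the decomposition, and using that every root has squared length $2$, gives
\[
 (\delta,\delta)=2N+\sum_{k\ne l}(\beta_k,\beta_l).
\]
On the other hand, since $\mu,\nu\in\Omega$ and each $\beta_k\in\Phi$, the minuscule bounds give
\[
 (\delta,\delta)=\sum_k\bigl[(\mu,\beta_k)-(\nu,\beta_k)\bigr]\leq 2N.
\]
Comparing the two expressions, $\sum_{k\ne l}(\beta_k,\beta_l)\le 0$. By Step 2 every term in this sum is nonnegative, so every term is zero. Hence $\Hom(M_{\beta_k},M_{\beta_l})=0$ for all $k\ne l$.

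\emph{Step 4: conclusions.} If some root appeared twice, say $\beta_k=\beta_l$ with $k\ne l$, then $\Hom(M_{\beta_k},M_{\beta_l})$ would contain the identity, contradicting Step 3. So all summands are pairwise nonisomorphic, each occurring once, and the required $\Hom$ and $\Ext^1$ vanishing between distinct summands holds. The Euler form values in~\eqref{eq:recording} then follow directly from~\eqref{eq:hom-ext}:
\begin{itemize}
 \item $E_Q(\beta,\beta)=\dim\operatorname{End}(M_\beta)=1$;
 \item $E_Q(\beta,\gamma)=0$ for $\beta\ne\gamma$.
\end{itemize}
Orthogonality of distinct roots follows from the symmetrization identity in~\eqref{eq:euler}.

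The key step is the two-way computation of $(\delta,\delta)$ in Step 3. It is the only place where minusculeness enters, and everything else is formal. The one point to check there is that the bound $(\mu,\beta)-(\nu,\beta)\le 2$ applies to every positive root $\beta$ and not only to simple roots; this holds because the minuscule bound is stated for all $\beta\in\Phi$.
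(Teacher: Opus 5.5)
Your proof is correct and follows essentially the paper's argument: rigidity plus the Euler form gives $(\beta_k,\beta_l)=\dim\Hom(M_{\beta_k},M_{\beta_l})+\dim\Hom(M_{\beta_l},M_{\beta_k})\geq 0$ for distinct summands, and the minuscule bound then forces multiplicity one and vanishing cross terms. The only difference is bookkeeping: the paper writes multiplicities $m_j$ and applies the bound $(\delta,\beta_j)\leq 2$ to each summand separately, whereas you sum these bounds into $(\delta,\delta)\leq 2N$, which yields the same conclusion.
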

\begin{proof}
Write the generic representation as $\bigoplus_jM_{\beta_j}^{\oplus m_j}$, with the $M_{\beta_j}$ pairwise nonisomorphic.  Rigidity gives $\Ext^1(M_{\beta_j},M_{\beta_k})=0$ for every $j,k$.  For $j\ne k$, equations~\eqref{eq:euler} and~\eqref{eq:hom-ext} therefore give
\[
 (\beta_j,\beta_k)
 =\dim\Hom(M_{\beta_j},M_{\beta_k})
  +\dim\Hom(M_{\beta_k},M_{\beta_j})\geq0.
\]
On the other hand, $\delta=\mu-\nu$ and the minuscule bounds imply
\[
 2\geq(\delta,\beta_j)
 =2m_j+\sum_{k\ne j}m_k(\beta_k,\beta_j).
\]
Since $m_j\geq1$ and every term in the sum is nonnegative, $m_j=1$ and $(\beta_k,\beta_j)=0$ for $k\ne j$.  Both homomorphism spaces in the preceding display must consequently vanish.  The Euler formula now gives~\eqref{eq:recording}.  Uniqueness of the open orbit and the Krull--Schmidt theorem make $R_Q(\delta)$ well defined.  When $\delta=0$, the generic representation is zero and $R_Q(0)=\varnothing$.
\end{proof}

\subsection{The cube vector}
Let $R$ be any set of positive pairwise orthogonal roots with sum $\delta=\mu-\nu$.  We call $R$ an \emph{orthogonal decomposition} of $\delta$; Proposition~\ref{prop:generic} supplies the decomposition $R_Q(\delta)$ for every orientation $Q$.  For $\beta\in R$, orthogonality gives $(\delta,\beta)=2$, so the minuscule bounds force
\[
 (\mu,\beta)=1,\qquad(\nu,\beta)=-1.
\]
Thus $s_\beta\mu=\mu-\beta$.  Since the reflections in the roots of $R$ commute, for every subset $S\subseteq R$ we have
\begin{equation}\label{eq:cube}
 \mu_S=\left(\prod_{\beta\in S}s_\beta\right)\mu
 =\mu-\sum_{\beta\in S}\beta\in\Omega.
\end{equation}
These weights are the vertices of a cube with endpoints $\mu_\varnothing=\mu$ and $\mu_R=\nu$.  Opposite vertices satisfy $\mu_S+\mu_{R\setminus S}=\xi$.

Since every root has squared length $2$, neither the sum nor the difference of two orthogonal roots is a root.  Their root $\mathfrak{sl}_2$ subalgebras therefore commute.  Choose a lowering operator $F_\beta\in\mathfrak g_{-\beta}$ for each $\beta\in R$, and set
\begin{equation}\label{eq:z}
 z_{\mu,\nu;R}
 :=\left(\prod_{\beta\in R}F_\beta\right)u_\mu^2
 \in V^{2\lambda}_\xi.
\end{equation}
For $S\subseteq R$, write $w_S:=(\prod_{\beta\in S}F_\beta)u_\mu$.  Each $w_S$ is a nonzero multiple of $u_{\mu_S}$: before an unused root operator $F_\beta$ is applied, the current weight still has pairing $1$ with $\beta$.  Each root operator acts on a symmetric product by the derivation rule, so
\begin{equation}\label{eq:expansion}
 z_{\mu,\nu;R}
 =\sum_{S\subseteq R}w_Sw_{R\setminus S}.
\end{equation}
For example, if $R=\{\beta,\gamma\}$, this is the sum of the products associated with the two pairs of opposite vertices:
\[
 z_{\mu,\nu;R}
 =2\bigl(u_\mu F_\beta F_\gamma u_\mu
 +(F_\beta u_\mu)(F_\gamma u_\mu)\bigr).
\]

We use the symmetric-square monomial basis consisting of $u_\eta u_\theta$ for unordered pairs $\{\eta,\theta\}$ of weights, allowing $\eta=\theta$.  The roots in $R$ are linearly independent.  Consequently, two subsets give the same monomial in~\eqref{eq:expansion} only when they are equal or complementary.  Complementary subsets contribute identical terms, so there is no cancellation.  In particular, the coefficient of $u_\mu u_\nu$ is nonzero.  Rescaling any $F_\beta$ rescales the whole vector, so its line depends only on $(\mu,\nu;R)$.  Let $v_{\mu,\nu;R}$ be the unique vector on this line whose coefficient at $u_\mu u_\nu$ is $1$.

\begin{theorem}\label{thm:arbitrary-basis}
Choose an orthogonal decomposition $R(\mu,\nu)$ of $\mu-\nu$ for every comparable pair $\mu\geq\nu$ in $\Omega$.  Then the vectors
\[
 v_{\mu,\nu;R(\mu,\nu)}\qquad(\mu,\nu\in\Omega,\ \mu\geq\nu)
\]
form a weight basis of $V^{2\lambda}$, with $v_{\mu,\nu;R(\mu,\nu)}$ of weight $\mu+\nu$.  Their coefficients in the standard symmetric-square monomial basis belong to $\{0,1,-1\}$, and they form a $\Z$-basis of the saturated lattice
\begin{equation}\label{eq:saturated}
 \LL:=V^{2\lambda}\cap\Sym^2V_\Z^\lambda.
\end{equation}
\end{theorem}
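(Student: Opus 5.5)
The plan is to prove linear independence by a triangularity argument on the symmetric-square monomials, get spanning by counting with \eqref{eq:dimension}, and read off the coefficient and integrality claims from the same triangularity. Fix $(\mu,\nu)$ and $R=R(\mu,\nu)$.

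\emph{Step 1: coefficients.} Normalize each $F_\beta$ to be a Chevalley root vector. Concretely, write $\beta=w\alpha_i$ and take $F_\beta=\dot w f_i\dot w^{-1}$. By \eqref{eq:standard-tits}, $\dot w$ permutes the standard weight basis up to signs. Hence $F_\beta u_\eta=\pm u_{\eta-\beta}$ whenever $(\eta,\beta)=1$.

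It follows that every $w_S$ in \eqref{eq:expansion} equals $\pm u_{\mu_S}$. Here I use that before each factor is applied, the current weight still pairs to $1$ with the next root, as noted before \eqref{eq:expansion}. Then \eqref{eq:expansion} gives
\[
z_{\mu,\nu;R}=2\sum_{\{S,R\setminus S\}}\pm\,u_{\mu_S}u_{\mu_{R\setminus S}}
\]
when $R\neq\varnothing$. The sum runs over unordered complementary pairs, and these give distinct monomials because the roots of $R$ are linearly independent. Dividing by the coefficient $\pm2$ of $u_\mu u_\nu$ shows that all coefficients of $v_{\mu,\nu;R}$ lie in $\{0,\pm1\}$. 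The case $R=\varnothing$ is immediate.

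\emph{Step 2: triangularity.} Let $\operatorname{ht}(\eta):=\langle\eta,\rho^\vee\rangle$, and for an unordered pair set $h\{\eta,\theta\}:=\max(\operatorname{ht}\eta,\operatorname{ht}\theta)$. Every cube vertex $\mu_S$ with $S\neq\varnothing$ has $\operatorname{ht}(\mu_S)=\operatorname{ht}(\mu)-\sum_{\beta\in S}\operatorname{ht}(\beta)<\operatorname{ht}(\mu)$. Also, $\operatorname{ht}(\nu)\le\operatorname{ht}(\mu)$.

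So $u_\mu u_\nu$ is the unique monomial of $v_{\mu,\nu;R}$ with $h=\operatorname{ht}(\mu)$, and every other monomial has strictly smaller $h$. Moreover, within a weight space $\xi$, distinct comparable pairs give distinct leading monomials $u_\mu u_\nu$, since $\mu$ determines $\nu=\xi-\mu$.

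Now suppose some linear relation holds among the vectors of a fixed weight $\xi$. Choose a pair $(\mu,\nu)$ with nonzero coefficient and maximal $\operatorname{ht}(\mu)$. Its leading monomial $u_\mu u_\nu$ occurs in no other vector of the relation: vectors with the same $h$-value have different leading monomials, and all their remaining monomials have smaller $h$. So that coefficient vanishes, a contradiction, and the vectors are linearly independent.

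By \eqref{eq:dimension}, the number of these vectors in each weight space equals $\dim V^{2\lambda}_\xi$. Since each vector lies in $V^{2\lambda}_\xi$ by \eqref{eq:z}, they form a weight basis of $V^{2\lambda}$.

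\emph{Step 3: $\Z$-basis of $\LL$.} The basis vectors have integer coefficients by Step 1, so they lie in $\LL$. Conversely, take $x\in\LL$ of weight $\xi$ and induct on the largest $h$ among its monomials. Expand $x=\sum c_{\mu,\nu}v_{\mu,\nu;R(\mu,\nu)}$ over $\C$. The triangularity of Step 2 shows that for each pair with maximal $\operatorname{ht}(\mu)$ among those with $c_{\mu,\nu}\neq0$, the coefficient $c_{\mu,\nu}$ equals the coefficient of $u_\mu u_\nu$ in $x$, which is an integer. Subtracting these terms leaves an element of $\LL$ with smaller maximal $h$, and induction finishes.

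The main obstacle is Step 1: justifying that a suitably normalized root vector $F_\beta$ sends $u_\eta$ to $\pm u_{\eta-\beta}$, rather than to some other scalar multiple. I would derive this from conjugating $f_i$ by Tits representatives and \eqref{eq:standard-tits}. The point is that the line of $v$ is independent of the choice of $F_\beta$, so this normalization is harmless.
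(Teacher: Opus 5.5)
Your proposal is correct and follows essentially the paper's proof. It uses the same Tits-conjugate normalization $F_\beta=\dot w f_i\dot w^{-1}$ to get $w_S=\pm u_{\mu_S}$ and coefficients $\pm1$, and the same leading-monomial triangularity at $u_\mu u_\nu$, with spanning from \eqref{eq:dimension} and integrality from unitriangularity. You order by $\langle\cdot,\rho^\vee\rangle$-height, where the paper uses a linear extension of root order on upper endpoints, and you phrase integrality as a leading-term induction rather than inverting an integer unitriangular matrix; both differences are cosmetic. The one step you leave implicit, and which the paper states, is that an arbitrary element of $\LL$ splits into weight components that again lie in $\LL$. This holds because the monomials $u_\eta u_\theta$ are weight vectors.
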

\begin{proof}
Fix a weight $\xi$.  For each comparable pair $\mu\geq\nu$ with sum $\xi$, consider the coefficient functional that extracts the coefficient of $u_\mu u_\nu$.  Evaluate these functionals on the proposed vectors of weight $\xi$.  By~\eqref{eq:dimension}, the resulting matrix is square, with its number of rows equal to $\dim V^{2\lambda}_\xi$.

Suppose that the monomial $u_{\mu'}u_{\nu'}$ occurs in $v_{\mu,\nu;R(\mu,\nu)}$ and that $\mu'\geq\nu'$.  Its upper endpoint $\mu'$ is one of the cube vertices $\mu_S$, so
\[
 \mu-\mu'=\sum_{\beta\in S}\beta\in L_+.
\]
If $\mu'=\mu$, then $S=\varnothing$ and $(\mu',\nu')=(\mu,\nu)$.  Order the comparable pairs by a linear extension of root order on their upper endpoints, placing $\mu'$ before $\mu$ whenever $\mu'<\mu$.  The coefficient matrix is then upper triangular, with diagonal entries $1$ by our normalization.  It is invertible, so the vectors are linearly independent.  Equation~\eqref{eq:dimension} proves that they form a basis of $V^{2\lambda}_\xi$.

To determine the coefficients, choose each $F_\beta$ to be a Tits conjugate of a simple lowering operator.  By~\eqref{eq:standard-tits}, Tits representatives permute the standard weight basis up to sign, so every nonzero coefficient of $F_\beta$ in that basis is $\pm1$.  Hence $w_S=\pm u_{\mu_S}$.  If $R$ is nonempty, each monomial in~\eqref{eq:expansion} occurs exactly twice, from complementary subsets, with the same sign.  Every nonzero coefficient of $z_{\mu,\nu;R}$ is therefore $2$ or $-2$.  Dividing by the coefficient at $u_\mu u_\nu$ gives coefficients $\pm1$ in $v_{\mu,\nu;R}$.  The empty decomposition gives $u_\mu^2$.  Thus all the basis vectors belong to $\LL$.

The coefficient matrix above is now an integer unitriangular matrix, so its inverse also has integer entries.  If $x\in\LL$ has weight $\xi$, its coefficients at the comparable monomials are integers.  Multiplying this coefficient vector by the inverse matrix expresses $x$ as an integer linear combination of the proposed basis vectors.  Finally, the ambient monomial basis is a weight basis, so every element of $\LL$ is the sum of its integral weight components.  This proves the lattice assertion.
\end{proof}

For the reverse plane partition $\pi$ corresponding to $(\mu,\nu)$, define
\[
 v^Q_\pi=v^Q_{\mu,\nu}
 :=v_{\mu,\nu;R_Q(\mu-\nu)}.
\]
The preceding theorem shows that
\[
 \mathcal B^Q:=\{v^Q_\pi:\pi\in\RPP_2(P)\}
\]
is a basis of $V^{2\lambda}$.  We call it the \emph{quiver basis} associated with $Q$.

The dimension of the cube also determines the number of monomials in a basis vector.  With $k:=|R_Q(\mu-\nu)|$, orthogonality and $(\mu,\mu)=(\nu,\nu)=(\lambda,\lambda)$ give
\begin{equation}\label{eq:support-size}
 k=\tfrac12(\mu-\nu,\mu-\nu)
 =(\lambda,\lambda)-(\mu,\nu).
\end{equation}
There is one monomial for each pair of opposite vertices, so $v^Q_\pi$ has $2^{k-1}$ monomials when $k>0$, and one when $k=0$.

\begin{remark}
The lattice~\eqref{eq:saturated} need not be the highest-weight lattice generated by divided powers from $u_\lambda^2$.  For the two-dimensional $\mathfrak{sl}_2$-module, set $u_+:=u_\lambda$ and $u_-:=f_1u_+$.  Then $f_1(u_+^2)=2u_+u_-$, whereas the saturated lattice contains $u_+u_-$.  The integral normalization in Theorem~\ref{thm:arbitrary-basis} is relative to the ambient symmetric square.
\end{remark}

\section{Lifting simple reflections}\label{sec:reflections}

We now prove that the Tits representative $n_i$ takes each quiver basis vector, up to sign, to the vector indexed by its label toggle whenever $i$ is a source or sink.  The target vector belongs to the basis for the reflected orientation.  Reflection functors describe the change in the indecomposable summands, and conjugation by $n_i$ gives the corresponding change in their root operators.

For a source or sink $i$ of $Q$, let $Q^i$ be obtained by reversing the arrows incident to $i$.  The reflection functor of Bernstein, Gel'fand, and Ponomarev~\cite{BGP}, Definition~1.1, leaves the vector spaces away from $i$ unchanged and replaces $M_i$ by
\[
 \begin{cases}
 \operatorname{coker}\!\left(M_i\longrightarrow\displaystyle\bigoplus_{i\to j}M_j\right)&\text{if $i$ is a source},\\[6pt]
 \ker\!\left(\displaystyle\bigoplus_{j\to i}M_j\longrightarrow M_i\right)&\text{if $i$ is a sink}.
 \end{cases}
\]
The maps on reversed arrows are induced by the inclusions into the direct sum and the projections from it.  The simple representation $S_i$, with a one-dimensional space at $i$ and zero spaces elsewhere, is sent to zero.  Every other indecomposable $M_\beta$ is sent to the indecomposable of dimension vector $s_i\beta$.  On the subcategories without $S_i$ as a direct summand, the reflection functors are inverse equivalences and preserve Hom spaces.  The identity
\begin{equation}\label{eq:euler-reflection}
 E_{Q^i}(s_i x,s_i y)=E_Q(x,y)
\end{equation}
then shows that they preserve the dimensions of Ext spaces as well.

\begin{proposition}\label{prop:transport}
Let $i$ be a source or sink of $Q$, let $\mu\geq\nu$ be weights of $V^\lambda$, and set $(\mu',\nu'):=\tau_i(\mu,\nu)$.  Then
\begin{equation}\label{eq:recording-transport}
 R_{Q^i}(\mu'-\nu')
 =\{\pos(s_i\beta):\beta\in R_Q(\mu-\nu)\},
\end{equation}
where $\pos$ chooses the positive root on a root line, and
\begin{equation}\label{eq:local-signed}
 n_i v^Q_{\mu,\nu}
 =\eps_i(\mu)\eps_i(\nu)v^{Q^i}_{\mu',\nu'}.
\end{equation}
\end{proposition}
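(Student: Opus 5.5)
Split into the two cases of Lemma~\ref{lem:cutoff}.

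\emph{Generic case: $s_i\delta\in L_+$.} Then $(\mu',\nu')=(s_i\mu,s_i\nu)$ and $\delta':=\mu'-\nu'=s_i\delta$. Write $M=\bigoplus_{\beta\in R_Q(\delta)}M_\beta$ for the generic representation.

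First suppose $\alpha_i\notin R_Q(\delta)$. Then $M$ has no summand $S_i$. The reflection functor sends $M$ to $\bigoplus_\beta M_{s_i\beta}$, which has dimension vector $s_i\delta$. Since reflection functors preserve Hom and Ext dimensions by~\eqref{eq:euler-reflection}, the image is rigid, hence generic for $Q^i$. This gives~\eqref{eq:recording-transport}, with $\pos(s_i\beta)=s_i\beta$.

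Now suppose $\alpha_i\in R_Q(\delta)$. The remaining roots are orthogonal to $\alpha_i$, so they are fixed by $s_i$. I would show that the other summands contain no vertex-$i$ dimension that prevents reflection, and that the new generic decomposition is obtained by replacing $\alpha_i$ with $\pos(-\alpha_i)=\alpha_i$. Then $s_i\delta=\delta-2\alpha_i+\dots$. I would check this against the Euler form: $E_{Q^i}$ pairs $\alpha_i$ with the reflected roots exactly as $E_Q$ did, because orthogonality plus $E_Q(\beta,\alpha_i)=E_Q(\alpha_i,\beta)=0$ transfers. Orthogonality and Proposition~\ref{prop:generic} give vanishing Hom and Ext in both directions. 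Hence the set $\{\pos(s_i\beta)\}$ is pairwise Hom/Ext-orthogonal for $Q^i$, with each root occurring once, so its direct sum is rigid. Its sum is $\sum\pos(s_i\beta)$. This must equal $\delta'$, which I would verify by a weight computation: $\pos(s_i\alpha_i)=\alpha_i$ while $s_i\alpha_i=-\alpha_i$. The mismatch should be reconciled by checking which case of~\eqref{eq:cutoff} applies. When $\alpha_i\in R$, we have $(\mu,\alpha_i)=1$ and $(\nu,\alpha_i)=-1$, and the other roots are orthogonal to $\alpha_i$. So if $d_i=1$ and the neighbours vanish, we are exactly in the wall case~\eqref{eq:wall}. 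More generally the claim needs care, and this is where I expect the main bookkeeping.

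The cleanest uniform route is probably this: show directly that any set $R'$ of roots pairwise satisfying $E_{Q'}(\beta,\gamma)=0$ for $\beta\ne\gamma$, with sum $\delta'$, is $R_{Q'}(\delta')$. Such a set gives a rigid representation, and the generic representation is unique. Apply this to $R'=\{\pos(s_i\beta)\}$, where $E_{Q^i}(\pm s_i\beta,\pm s_i\gamma)=\pm E_Q(\beta,\gamma)=0$. It then remains only to identify the sum with $\mu'-\nu'$ in both cases of the lemma.

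\emph{Vector identity.} Conjugation gives $n_iF_\beta n_i^{-1}\in\mathfrak g_{-s_i\beta}$. This is a lowering operator for $\pos(s_i\beta)$ when $s_i\beta>0$, and a raising operator when $\beta=\alpha_i$.

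In the non-wall case, $n_iz_{\mu,\nu;R}=\prod_\beta(n_iF_\beta n_i^{-1})\,(n_iu_\mu)^2=\prod F'_{s_i\beta}\,u_{s_i\mu}^2$. Here $(n_iu_\mu)^2=u_{s_i\mu}^2$, since the sign squares away. If $\alpha_i\notin R$, the right side is a nonzero multiple of $z_{\mu',\nu';R'}$, so $n_iv^Q_{\mu,\nu}$ is proportional to $v^{Q^i}_{\mu',\nu'}$.

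If $\alpha_i\in R$, I would instead use~\eqref{eq:expansion} directly. The expansion of $n_iz$ is the sum over the same cube after applying $s_i$, and $s_i$ maps the cube of $(\mu,\nu;R)$ onto the cube of $(\mu',\nu';R')$. So proportionality still follows from the monomial supports, together with the one-dimensionality of the line determined by the cube (both are lines in the span of cube monomials with matching relative coefficients, as $n_i$ permutes $u_\eta$ up to sign).

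Finally, the scalar is read off from the coefficient of $u_{\mu'}u_{\nu'}$. In the non-wall case $n_i(u_\mu u_\nu)=\eps_i(\mu)\eps_i(\nu)u_{s_i\mu}u_{s_i\nu}$. Since $v$ is normalized at $u_\mu u_\nu$, and $u_{\mu'}u_{\nu'}$ is the normalizing monomial of the target, we get~\eqref{eq:local-signed}.

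In the wall case~\eqref{eq:wall}, $\alpha_i\in R$ necessarily (because $d_i=1$ and $(\delta,\alpha_i)=2$). Then $R'=R$, and $n_i$ maps $u_\mu u_\nu$ to $-u_{s_i\mu}u_{s_i\nu}$ with $s_i\mu=\mu-\alpha_i$, a cube vertex. The vector $n_iv$ is again proportional to $v_{\mu,\nu;R}$, and one must track that its $u_\mu u_\nu$ coefficient is $\eps_i(\mu)\eps_i(\nu)=-1$. I expect this sign verification, which comes from the opposite-vertex pair $(\mu-\alpha_i,\nu+\alpha_i)$, to be the main obstacle.
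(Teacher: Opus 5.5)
Your argument where $\alpha_i\notin R_Q(\delta)$ (reflection functors, conjugating the root operators, reading the scalar off $u_\mu u_\nu$) is the paper's. The gap is that you leave open exactly the point on which the proposition turns, and in the case you defer \eqref{eq:recording-transport} would be false. If $\alpha_i\in R_Q(\delta)$, orthogonality gives $(\delta,\alpha_i)=2$, so $s_i\delta=\delta-2\alpha_i$, while $\sum_{\beta}\pos(s_i\beta)=\delta$. This equals $\mu'-\nu'$ only in the fixed case of \eqref{eq:cutoff}, that is, only if $d_i=1$. So you must prove that, at a source or sink, $\alpha_i\in R_Q(\delta)$ forces \eqref{eq:wall}. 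Your sentence ``if $d_i=1$ and the neighbours vanish, we are exactly in the wall case'' assumes this. Nothing in your argument uses the source/sink hypothesis here, yet it is essential: for $Q:1\to2\to3$ and the pair $(12,34)$ of Example~\ref{ex:A3}, $R_Q(\alpha_1+2\alpha_2+\alpha_3)=\{\alpha_2,\alpha_1+\alpha_2+\alpha_3\}$ contains $\alpha_2$ with $d_2=2$.

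The missing step is short. At a source, $\Hom(M,S_i)\cong M_i^*$ for every $M$; at a sink, $\Hom(S_i,M)\cong M_i$. The vanishing of $\Hom$ between distinct summands in Proposition~\ref{prop:generic} then gives $\beta_i=0$ for every other $\beta\in R_Q(\delta)$, and $(\beta,\alpha_i)=0$ forces $\beta_j=0$ for all $j\sim i$. Equivalently, at a source $E_Q(\beta,\alpha_i)=\beta_i$ and $E_Q(\alpha_i,\beta)=\beta_i-\sum_{j\sim i}\beta_j$, both zero by \eqref{eq:recording}; you wrote these vanishings down but never evaluated them. Hence $\alpha_i\in R_Q(\delta)$ occurs exactly in the wall case, and your subcase with $s_i\delta\in L_+$ and $\alpha_i\in R$ is empty. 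In the wall case the other summands vanish at $i$ and its neighbours, so reversing arrows at $i$ leaves them unchanged; that is what justifies $R'=R$. Your ``uniform route'' does not bypass this, since it must still match $\sum\pos(s_i\beta)$ with $\mu'-\nu'$. Its claim that pairwise Euler-orthogonal roots give a rigid sum also needs the Dynkin fact that $\Hom$ and $\Ext^1$ between indecomposables are never both nonzero.

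Second, the wall case of \eqref{eq:local-signed} is not proved. Your proportionality of $n_iv$ and $v$ rests on an unjustified ``one-dimensionality of the line determined by the cube'' and ``matching relative coefficients.'' That is the very thing to be shown, since $n_i$ permutes the cube monomials only up to sign, and you leave the sign itself open. The paper settles both with an $\mathfrak{sl}_2$ argument. The $F_\beta$ with $\beta\in R\setminus\{\alpha_i\}$ are orthogonal to $\alpha_i$, hence commute with $e_i$ and $f_i$. So $\prod_{\beta\ne\alpha_i}F_\beta\,u_\mu^2$ is a nonzero highest-weight vector of $\alpha_i^\vee$-weight $2$, because $(\mu,\alpha_i)=1$. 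Applying $F_{\alpha_i}$ gives the zero-weight vector of a three-dimensional irreducible, on which $n_i$ acts by $-1=\eps_i(\mu)\eps_i(\nu)$. Your own conjugation idea, which you abandoned here, also works: $n_if_in_i^{-1}=-e_i$, $n_i$ commutes with the other $F_\beta$, and $n_iu_\mu^2=u_{\mu-\alpha_i}^2$, so $n_iz=-z$ directly.
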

\begin{proof}
Set $\delta:=\mu-\nu$.  Suppose first that $S_i$ occurs in the generic representation of dimension $\delta$.  For any representation $M$ of $Q$, we have $\dim\Hom(M,S_i)=(\dim M)_i$ if $i$ is a source, and $\dim\Hom(S_i,M)=(\dim M)_i$ if $i$ is a sink.  Since there are no nonzero homomorphisms between distinct summands, every other summand has zero coordinate at $i$.  Its root is orthogonal to $\alpha_i$, so its coordinates at all neighbors of $i$ are also zero.  Thus $\delta$ satisfies~\eqref{eq:wall}.  Conversely, these coordinate conditions split off one copy of $S_i$.  All remaining summands have zero spaces at $i$ and its neighbors and are unchanged by the orientation reflection.  Since $\pos(s_i\alpha_i)=\alpha_i$, this proves~\eqref{eq:recording-transport} in the fixed case of~\eqref{eq:cutoff}.

If $S_i$ does not occur, the reflection functor sends all summands to indecomposables with positive roots $s_i\beta$.  Their direct sum is rigid, hence generic of dimension $s_i\delta$.  This proves~\eqref{eq:recording-transport} in the other case.

We next apply the Tits representative to the cube vector.  When $s_i\delta\in L_+$, conjugation by $n_i$ carries $\mathfrak g_{-\beta}$ to $\mathfrak g_{-s_i\beta}$, and $n_i$ sends $u_\mu^2$ to $u_{s_i\mu}^2$.  Applying $n_i$ to~\eqref{eq:z} therefore gives a nonzero multiple of the cube vector indexed by $(\mu',\nu')$ for $Q^i$.  When $s_i\delta\notin L_+$, one of the roots is $\alpha_i$.  Applying the other root operators to $u_\mu^2$ gives a nonzero highest-weight vector of weight two for the $i$th $\mathfrak{sl}_2$, since these operators commute with that subalgebra.  Applying $F_{\alpha_i}$ then gives the middle-weight vector~\eqref{eq:z}, on which $n_i$ acts by $-1$.  Thus $n_i$ takes the original vector to a nonzero multiple of the required vector in both cases.

The endpoint normalization determines the scalar.  In the first case, $u_\mu u_\nu$ is the unique monomial mapping to $u_{s_i\mu}u_{s_i\nu}$, with coefficient $\eps_i(\mu)\eps_i(\nu)$ by~\eqref{eq:standard-tits}.  In the fixed case, the preceding $\mathfrak{sl}_2$ argument gives the scalar $-1=\eps_i(\mu)\eps_i(\nu)$.  Since the target vector has endpoint coefficient $1$, equation~\eqref{eq:local-signed} follows.  This proves Theorem~\ref{thm:intro-local}.
\end{proof}

\subsection{The reflect-and-peel algorithm}
Fix $\mu,\nu\in\Omega$ with $\mu\geq\nu$, and set $\delta:=\mu-\nu$.  The same reflections compute $R_Q(\delta)$ without constructing a quiver representation.  Choose a source order $i_1,\ldots,i_r$ for $Q$, meaning an ordering of the vertices consistent with its arrows, and set $c_Q:=s_{i_r}\cdots s_{i_1}$ and $\dot c_Q:=n_{i_r}\cdots n_{i_1}$.  These products depend only on $Q$, since any two source orders differ by interchanging consecutive nonadjacent vertices.

Keep $Q$ for the initial orientation, and repeat the chosen order: each vertex is a source when reached, and a complete sweep returns the orientation to $Q$.  Start with current orientation $Q$, $d:=\delta$, $w:=1\in W$, and an empty output.  At the current source $i$, perform one of the following steps:
\begin{enumerate}
\item If $d_i=1$ and $d_j=0$ for every neighbor $j$ of $i$, output $w^{-1}\alpha_i$ and replace $d$ by $d-\alpha_i$.
\item Otherwise, replace $d$ by $s_i d$.
\end{enumerate}
In both cases replace $w$ by $s_iw$, reverse the arrows incident to $i$ in the current orientation, and continue until $d=0$.  We call this the \emph{reflect-and-peel algorithm}.  A peeling step records and removes a simple-root summand.  The corresponding toggle in~\eqref{eq:cutoff} fixes the pair and retains that summand.

\begin{proposition}\label{prop:algorithm}
The reflect-and-peel algorithm terminates within $h$ complete sweeps and outputs precisely $R_Q(\delta)$.  Its output is therefore independent of the source order compatible with $Q$.
\end{proposition}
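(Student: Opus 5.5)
We track the algorithm through an invariant linking the current data $(d,w,\text{orientation})$ to the generic decomposition. Write $Q_t$ for the orientation after $t$ steps, $d^{(t)}$ and $w_t$ for the current vector and Weyl element, and $O_t$ for the output so far. The invariant we aim to prove by induction on $t$ is
\[
 R_Q(\delta)=O_t\ \sqcup\ \{\pos(w_t^{-1}\gamma):\gamma\in R_{Q_t}(d^{(t)})\},
\]
and, moreover, $d^{(t)}$ is itself a difference of comparable weights, namely $d^{(t)}=\mu_t-\nu_t$ with $\mu_t\geq\nu_t$ in $\Omega$. This makes Proposition~\ref{prop:generic} and Proposition~\ref{prop:transport} applicable at every stage.

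For the inductive step, we apply Proposition~\ref{prop:transport} at the current source $i$ of $Q_t$ to the pair $(\mu_t,\nu_t)$, and split into the two cases of Lemma~\ref{lem:cutoff}.

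In the non-peeling case, $s_id^{(t)}\in L_+$ and $\tau_i(\mu_t,\nu_t)=(s_i\mu_t,s_i\nu_t)$. Equation~\eqref{eq:recording-transport} then gives $R_{Q_{t+1}}(s_id^{(t)})=\{s_i\gamma\}$. No $\pos$ is needed here, since in this case $S_i$ is not a summand and the reflection functor sends every summand to a positive root. Because $w_{t+1}^{-1}s_i=w_t^{-1}$, the invariant persists.

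In the peeling case, the wall conditions~\eqref{eq:wall} hold. By the first paragraph of the proof of Proposition~\ref{prop:transport}, $\alpha_i\in R_{Q_t}(d^{(t)})$, and the other summands vanish at $i$ and at its neighbours. We may therefore remove $\alpha_i$ and record it. The remaining summands form the generic representation of dimension $d^{(t)}-\alpha_i$: a direct summand of a rigid module is rigid, and rigid means generic. That remaining part is unchanged by reversing the arrows at $i$ and is fixed by $s_i$. We also need the recorded root to be positive and to match the invariant: $\alpha_i$ sits in the set as $\pos(w_t^{-1}\alpha_i)$, and the algorithm outputs $w_t^{-1}\alpha_i$ as written, with $w_{t+1}=s_iw_t$.

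For the new $d$ we need a comparable pair: $d^{(t)}-\alpha_i=(\mu_t-\alpha_i)-\nu_t$ with $\mu_t-\alpha_i=s_i\mu_t\in\Omega$, and comparability is clear.

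The main obstacle is the sign issue in the invariant, namely showing that $w_t^{-1}\alpha_i$ is already positive, so that the algorithm's output needs no $\pos$. To handle this we sharpen the invariant to $w_t^{-1}\gamma>0$ for every $\gamma\in R_{Q_t}(d^{(t)})$, with no $\pos$. The non-peeling step preserves this directly. At a peeling step, the root $\gamma=\alpha_i$ is removed at that moment, so positivity for it is exactly what the sharpened invariant supplies.

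For termination, we use the Auslander--Reiten picture. A full sweep of source reflections applies the Coxeter functor $C^+$, acting on dimension vectors by $c_Q$. Every indecomposable $M_\beta$ is killed by some iterate of $C^+$, meaning it is reached as $S_i$ at a source within at most $h$ sweeps; this is standard, since the preprojective component has $h$ slices. A summand that reaches $S_i$ at a source is peeled exactly then, because the wall conditions hold at that moment. Each summand in the invariant evolves independently as $M_\beta\mapsto$ reflection, so all of them are peeled within $h$ sweeps and $d=0$.

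Finally, the output is $R_Q(\delta)$, which is intrinsic to $Q$, so it does not depend on the source order chosen.
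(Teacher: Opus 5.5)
Your proof is correct. Its bookkeeping half is the paper's argument:
\begin{itemize}
\item $d$ stays a difference of comparable weights;
\item by Proposition~\ref{prop:transport}, every surviving summand is transported by the source reflections, whether the step peels or reflects;
\item so a summand with original root $\beta$ has current root $w\beta$, and the recorded root $w^{-1}\alpha_i$ is the original root of the peeled summand.
\end{itemize}

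The ``sign obstacle'' you single out is not really one. Once $\pos$ is dropped, the set $\{w_t^{-1}\gamma:\gamma\in R_{Q_t}(d^{(t)})\}$ is literally unchanged at reflection steps and loses exactly the recorded root at peeling steps. Positivity is therefore inherited from $R_Q(\delta)$.

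The genuine difference is termination.
\begin{itemize}
\item You appeal to a standard fact: every indecomposable is killed by iterated source reflections within $h$ sweeps. The fact is true, but the reason you give (``the preprojective component has $h$ slices'') is not accurate. For instance, in type $D_4$ the Auslander--Reiten quiver consists of three $\tau$-translates of a slice, while $h=6$.
\item The paper proves the needed bound directly, and the argument slots into your invariant. After $k$ complete sweeps $w=c_Q^k$. A summand surviving $h$ sweeps would therefore make $\beta,c_Q\beta,\ldots,c_Q^{h-1}\beta$ all positive roots. But $c_Q^h=1$ and $1-c_Q$ is invertible, so $\sum_{k=0}^{h-1}c_Q^k\beta=0$, which is impossible for a sum of positive roots.
\end{itemize}

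This is the classical Bernstein--Gel'fand--Ponomarev argument and is self-contained. Your route gives a more conceptual picture, with each summand pushed along by Coxeter functors until it becomes a simple at a source. The cost is relying on a bound you neither proved nor correctly justified.
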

\begin{proof}
At each stage, $d$ remains a difference of comparable weights of $V^\lambda$.  Indeed, write the current difference as $d=\mu_0-\nu_0$.  At a reflection step the reflected weights remain comparable by~\eqref{eq:cutoff}; at a peeling step,
\[
 d-\alpha_i=(\mu_0-\alpha_i)-\nu_0,
 \qquad \mu_0-\alpha_i=s_i\mu_0\in\Omega
\]
by~\eqref{eq:wall}.

Track the indecomposable summands of the generic representation.  By Proposition~\ref{prop:transport}, the algorithm removes a summand exactly when its current root is $\alpha_i$.  Every other root is reflected and remains positive.  At a peeling step, the remaining summands avoid $i$ and its neighbors and are fixed by $s_i$.  Thus a surviving root is always $w\beta$, where $\beta$ is its original root, and the root $w^{-1}\alpha_i$ recorded at a peeling step is the original positive root of the removed summand.

Suppose that a root $\beta$ survived $h$ complete sweeps.  At their boundaries, its current roots would include $\beta,c_Q\beta,\ldots,c_Q^{h-1}\beta$, all positive.  But $c_Q^h=1$, and $1-c_Q$ is invertible on the root space, so
\[
 \sum_{k=0}^{h-1}c_Q^k\beta=0.
\]
A sum of positive roots cannot vanish.  Hence every summand is removed within $h$ sweeps, proving termination and identifying the output.
\end{proof}

\begin{example}
In type~$A_3$ with $\lambda=\omega_2$, take $Q:1\leftarrow2\to3$ and $\delta:=\lambda-w_0\lambda=\alpha_1+2\alpha_2+\alpha_3$, and use the source order $2,1,3$.  Reflection at $2$ gives $d=\alpha_1+\alpha_3$.  The next two steps peel $\alpha_1$ and $\alpha_3$, recording
\[
 s_2\alpha_1=\alpha_1+\alpha_2,\qquad
 s_2s_1\alpha_3=\alpha_2+\alpha_3.
\]
Thus $R_Q(\delta)=\{\alpha_1+\alpha_2,\alpha_2+\alpha_3\}$.  We use this decomposition in Example~\ref{ex:A3}.
\end{example}

The algorithm also characterizes the decomposition in terms of the Euler form.

\begin{proposition}\label{prop:euler-characterization}
Let $\mu,\nu\in\Omega$ with $\mu\geq\nu$, and set $\delta:=\mu-\nu$.  Then $R_Q(\delta)$ is the unique set $R$ of positive roots such that
\[
 \sum_{\beta\in R}\beta=\delta,\qquad
 E_Q(\beta,\gamma)=0\quad(\beta,\gamma\in R,\ \beta\ne\gamma).
\]
\end{proposition}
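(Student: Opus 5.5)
Existence comes directly from Proposition~\ref{prop:generic}: $R_Q(\delta)$ consists of positive roots summing to $\delta$, and $E_Q(\beta,\gamma)=0$ for distinct $\beta,\gamma$ by~\eqref{eq:recording}. The content of the statement is uniqueness. Let $R$ be any set of positive roots with sum $\delta$ and $E_Q(\beta,\gamma)=0$ for all distinct $\beta,\gamma\in R$. I will show $R=R_Q(\delta)$ by showing that $M:=\bigoplus_{\beta\in R}M_\beta$ is rigid. Then $M$ is generic of dimension $\delta$, and Krull--Schmidt identifies its summands with $R_Q(\delta)$.

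The first step is to show that $R$ is an orthogonal decomposition. By~\eqref{eq:euler}, for distinct $\beta,\gamma\in R$ we get $(\beta,\gamma)=E_Q(\beta,\gamma)+E_Q(\gamma,\beta)=0$. The elements of $R$ are distinct as members of a set, so they are pairwise orthogonal.

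The second step is to show that $\Ext^1(M_\beta,M_\gamma)=0$ for all $\beta,\gamma\in R$. For $\beta=\gamma$ this holds because indecomposables have no self-extensions. For $\beta\ne\gamma$, the vanishing of $E_Q(\beta,\gamma)$ only says
\[
 \dim\Hom(M_\beta,M_\gamma)=\dim\Ext^1(M_\beta,M_\gamma),
\]
and this equality is the main obstacle. I would use the standard fact for Dynkin quivers that, for indecomposables $X\not\cong Y$, at least one of $\Hom(X,Y)$ and $\Ext^1(X,Y)$ vanishes. This follows because $\Ext^1(X,Y)\cong D\Hom(Y,\tau X)$ and the Auslander--Reiten quiver is directed: a nonzero map $X\to Y$ places $X$ before $Y$, while a nonzero map $Y\to\tau X$ places $Y$ before $X$. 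Combined with the displayed equality, this forces both spaces to be zero.

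If I want to avoid Auslander--Reiten theory, I would instead induct along the reflect-and-peel algorithm, mirroring the proof of Proposition~\ref{prop:algorithm}. At a source $i$, either $\alpha_i\in R$ or no element of $R$ is $\alpha_i$. In the first case the relation $E_Q(\alpha_i,\gamma)=0$ for a source forces $\gamma_i=0$, and orthogonality forces the neighbor coordinates of $\gamma$ to vanish. I then peel $\alpha_i$ exactly as in Proposition~\ref{prop:transport}. In the second case I reflect every root, using~\eqref{eq:euler-reflection} to preserve the Euler condition. This transports $R$ in the same way as~\eqref{eq:recording-transport} transports $R_Q(\delta)$. Termination within $h$ sweeps follows from the Coxeter argument already used, and at termination both sets are empty, so $R=R_Q(\delta)$.

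With the second step in hand, $\Ext^1(M,M)=0$, so $M$ is rigid and therefore generic, which finishes the proof. I expect the second step to be the only real difficulty, and I would present the Auslander--Reiten argument as the primary route.
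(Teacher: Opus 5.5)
Your primary argument is correct, and it takes a different route from the paper's.

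\textbf{The paper's route.} The paper proves uniqueness combinatorially. It runs the reflect-and-peel algorithm on an arbitrary candidate $R$ and shows that at each source $i$ the choice between peeling and reflecting is forced by the current sum $d$ alone. If $\alpha_i\in R$, the two equations $E(\beta,\alpha_i)=b_i=0$ and $E(\alpha_i,\beta)=b_i-\sum_{j\sim i}b_j=0$, for the Euler form $E$ of the current orientation, give the peeling criterion. Conversely, if the criterion holds, then $s_id$ has a negative coordinate, so $\alpha_i$ must lie in $R$ because $s_i$ permutes $\Phi^+\setminus\{\alpha_i\}$. Invariance \eqref{eq:euler-reflection} and termination then show that $R$ is reconstructed by exactly the same steps as $R_Q(\delta)$.

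\textbf{Your route.} You upgrade $E_Q(\beta,\gamma)=0$, which only says $\dim\Hom(M_\beta,M_\gamma)=\dim\Ext^1(M_\beta,M_\gamma)$, to the vanishing of both spaces. You do this with the Hom/Ext dichotomy for indecomposables of a Dynkin quiver: the formula $\Ext^1(X,Y)\cong D\Hom(Y,\tau X)$, valid for hereditary algebras, together with directedness of the Auslander--Reiten quiver. Then $\bigoplus_{\beta\in R}M_\beta$ is rigid, hence generic by the paper's codimension formula, and Krull--Schmidt finishes.

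\textbf{Comparison.} Your route is shorter and more conceptual. It is the Dynkin case of Kac's criterion that a decomposition into Schur roots with pairwise vanishing generic extensions is the canonical one. It needs neither the algorithm nor its termination, and it does not use your Step 1, since orthogonality plays no role in it. The price is importing Auslander--Reiten theory, which the paper avoids. The paper's argument stays within root combinatorics and machinery it has already built, and it is constructive.

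\textbf{Your fallback sketch.} This is essentially the paper's proof, but as written it omits the converse direction. To keep $R$ and $R_Q(\delta)$ on the same branch at every step, you must show that the peeling criterion on $d$ forces $\alpha_i\in R$. That is precisely the negative-coordinate observation above. Also, at a source it is $E_Q(\gamma,\alpha_i)$, not $E_Q(\alpha_i,\gamma)$, that equals $\gamma_i$; since both vanish by hypothesis, this slip is harmless.
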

\begin{proof}
The set $R_Q(\delta)$ satisfies these conditions by Proposition~\ref{prop:generic}.  To prove uniqueness, follow the algorithm for any set $R$ satisfying them, deleting $\alpha_i$ at a peeling step and applying $s_i$ to every root at a reflection step.  At a source $i$ of the current orientation, write $E$ for its Euler form.  Suppose first that $\alpha_i\in R$.  For any other root $\beta=\sum_jb_j\alpha_j$ in the current set, the two Euler equations are
\[
 0=E(\beta,\alpha_i)=b_i,\qquad
 0=E(\alpha_i,\beta)=b_i-\sum_{j\sim i}b_j.
\]
Thus $b_i=0$ and every neighboring coefficient is zero, giving the peeling criterion.  Conversely, when the peeling criterion holds, $s_i d$ has a negative coordinate.  Since $s_i$ sends every positive root other than $\alpha_i$ to a positive root, $\alpha_i$ must occur.  Every set satisfying the displayed conditions therefore follows the same steps.  Equation~\eqref{eq:euler-reflection} preserves the Euler pairings at reflection steps, and the other roots are fixed at peeling steps.  Termination reconstructs the same roots backwards.
\end{proof}

\section{The normalized Coxeter action}\label{sec:coxeter}

After a full source sweep, the orientation is again $Q$: each edge has been reversed once at each endpoint.  Composing the signed identities in Proposition~\ref{prop:transport} gives $\dot c_Qv^Q_\pi=\pm v^Q_{\gamma_{c_Q}(\pi)}$, proving the first assertion of Theorem~\ref{thm:intro-coxeter}.  More generally, source and sink reflections connect all orientations of the Dynkin tree, so a product of their Tits representatives carries any quiver basis to any other, up to signs.

It remains to rescale the vectors so that the scalar factor is independent of $\pi$.  Write $c:=c_Q$, $\dot c:=\dot c_Q$, and $\zeta:=\exp(2\pi i/h)$.  We will choose $\widehat v^Q_\pi$ so that
\[
 T_Q:=\zeta^{|P|}\dot c
\]
permutes them exactly as $\gamma_c$ permutes $\RPP_2(P)$.  This changes the representatives of the basis lines; the integral vectors $v^Q_\pi$ retain their endpoint normalization.  The operator $T_Q$ includes the displayed scalar and need not itself be a Tits representative.

The normalization comes from the ambient minuscule representation.  We first change the signs of its weight vectors so that the opposite-vertex products in each quiver cube have equal coefficients.  We then change their phases so that the Coxeter representative acts with the same scalar on every weight vector.  Taking symmetric squares gives the required representatives of the quiver basis lines.

\subsection{The Euler form and the weight basis}
For this section, relabel a source order for $Q$ as $1,\ldots,r$, so that $c=s_r\cdots s_1$ and $\dot c=n_r\cdots n_1$.  Write $E:=E_Q$, extended bilinearly to the real root space.  Its matrix in simple-root coordinates is upper triangular.  On a complex vector space with basis $\{u^Q_\mu:\mu\in\Omega\}$, define
\begin{equation}\label{eq:cocycle}
 X_\alpha u^Q_\mu:=
 \begin{cases}
 (-1)^{E(\alpha,\mu-\lambda)}u^Q_{\mu+\alpha}&\mu+\alpha\in\Omega,\\
 0&\text{otherwise},
 \end{cases}
 \qquad \alpha\in\Phi.
\end{equation}
The exponent is an integer because $\mu-\lambda$ lies in the root lattice.

\begin{lemma}\label{lem:cocycle}
The operators $e_i=X_{\alpha_i}$ and $f_i=-X_{-\alpha_i}$, together with the weight operators, identify this space with $V^\lambda$.  Under the identification taking $u^Q_\lambda$ to $u_\lambda$, each $u^Q_\mu$ is $u_\mu$ or $-u_\mu$.  Every $X_\alpha$ is a nonzero scalar multiple of a root operator.
\end{lemma}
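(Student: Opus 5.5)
Since both the standard basis and the new basis consist of vectors of the weights in $\Omega$ and each weight space of $V^\lambda$ is one-dimensional, we reduce everything to sign bookkeeping. Define a sign function $\sigma\colon\Omega\to\{\pm1\}$ recursively by $\sigma(\lambda)=1$ and require that $u^Q_\mu\mapsto\sigma(\mu)u_\mu$ intertwines the operators. Concretely, we aim to show $\sigma(\mu):=(-1)^{q(\mu-\lambda)}$ works, where $q(x):=E(x,x)$ modulo corrections, or more simply we compare coefficients directly.

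The first step checks the simple root operators. By~\eqref{eq:cocycle}, $e_iu^Q_\mu=(-1)^{E(\alpha_i,\mu-\lambda)}u^Q_{\mu+\alpha_i}$ and $f_iu^Q_\mu=-(-1)^{E(-\alpha_i,\mu-\lambda)}u^Q_{\mu-\alpha_i}=(-1)^{1+E(\alpha_i,\mu-\lambda)}u^Q_{\mu-\alpha_i}$. Under $u^Q_\mu=\sigma(\mu)u_\mu$ with the standard normalization, this requires
\[
 \sigma(\mu-\alpha_i)\sigma(\mu)=(-1)^{1+E(\alpha_i,\mu-\lambda)}\quad\text{whenever }(\mu,\alpha_i)=1.
\]
The $e_i$ condition at $\mu-\alpha_i$ is then $(-1)^{E(\alpha_i,\mu-\alpha_i-\lambda)}=(-1)^{E(\alpha_i,\mu-\lambda)-1}$, which agrees, so only this one relation matters. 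We then need a consistent $\sigma$. Define $\sigma$ along any lowering path from $\lambda$ (an order ideal built by adding elements of $P$); consistency amounts to path-independence. Since $J(P)$ is a distributive lattice, any two maximal chains are connected by diamond moves. Around a diamond, where $\alpha_i$ and $\alpha_j$ are removed in either order from $\mu$ with $i\ne j$, and necessarily $i\not\sim j$ since $(\mu-\alpha_i,\alpha_j)=1$ forces $(\alpha_i,\alpha_j)=0$, the product of the four signs is $(-1)^{E(\alpha_i,-\alpha_j)+E(\alpha_j,-\alpha_i)}=(-1)^{(\alpha_i,\alpha_j)}=1$ by~\eqref{eq:euler}. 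Hence $\sigma$ is well defined. Together with the weight operators and the Serre-type relations transported from $V^\lambda$, this identifies the space with $V^\lambda$, and the $e_i,f_i$ generate $\mathfrak g$, so the identification is a module isomorphism.

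Finally, each $X_\alpha$ is a nonzero multiple of a root operator. Pick $x_\alpha\in\mathfrak g_\alpha$ nonzero. In the standard basis, $x_\alpha$ acts by $x_\alpha u_\mu=c(\mu)u_{\mu+\alpha}$ with $c(\mu)\neq0$ exactly when $\mu+\alpha\in\Omega$, which in the minuscule case means $(\mu,\alpha)=-1$. The sign $c(\mu)=\pm c_0$ is constant up to sign, for example by writing $x_\alpha$ as a Tits conjugate of $e_i$ and using~\eqref{eq:standard-tits}. We must show that the ratio between $X_\alpha$ and $x_\alpha$ is independent of $\mu$. This is the main obstacle. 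Our plan is to use that $\mu\mapsto(-1)^{E(\alpha,\mu-\lambda)}$ is a character of the root lattice in $\mu$, and that the sign pattern of $x_\alpha$ in the $u^Q$ basis is also multiplicative. Compare both along a root $\beta$ orthogonal move. It suffices to check that for $\mu,\mu'$ on which $x_\alpha$ is nonzero, differing by a simple root $\alpha_j$ with $(\alpha,\alpha_j)=0$, the ratios agree. This follows from the commutation $[x_\alpha,e_j]=0$ (or $[x_\alpha,f_j]=0$) together with the already verified simple-root formulas and bilinearity: $E(\alpha,\alpha_j)+E(\alpha_j,\alpha)=0$ gives exactly the needed sign. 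Connectivity of $\{\mu:(\mu,\alpha)=-1\}$ under such moves comes from its being the weight set of a minuscule representation of the centralizer Levi, after possibly allowing moves by $s_j$ with $j$ adjacent, handled similarly via $\mathfrak{sl}_2$ strings.
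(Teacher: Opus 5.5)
Your treatment of the first two assertions is correct and differs from the paper's route. Instead of verifying the Chevalley and Serre relations for the $X_\alpha$, you build $\sigma$ by path-independence over the diamonds of $J(P)$, where $(-1)^{(\alpha_i,\alpha_j)}=1$ is exactly the needed check. You then transport the module structure from $V^\lambda$. The gap is in the third assertion.

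\textbf{The reduction to orthogonal simple moves fails.} Your plan compares the ratio of $X_\alpha$ to $x_\alpha$ across moves $\mu\mapsto\mu\pm\alpha_j$ with $(\alpha_j,\alpha)=0$. This presupposes that $\{\mu\in\Omega:(\mu,\alpha)=-1\}$ is connected under such moves. It is not, already for $\lambda=\omega_2$ in type $A_3$ with $\alpha=\alpha_1+\alpha_2=\eps_1-\eps_3$:
\begin{itemize}
\item the set consists of the weights of $u_{23}$ and $u_{34}$;
\item no simple root is orthogonal to $\alpha$;
\item the two weights differ by $\eps_2-\eps_4$, the only positive root orthogonal to $\alpha$, which is not simple.
\end{itemize}
So the ``centralizer Levi'' is not generated by simple roots of $\mathfrak g$. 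A move along a nonsimple $\beta\perp\alpha$ would require the sign pattern of $x_{-\beta}$ in the $u^Q$ basis, which is the statement being proved.

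\textbf{The fallback does not repair this.} A step $\mu\mapsto\mu\pm\alpha_j$ with $(\alpha_j,\alpha)\neq0$ leaves the support of $x_\alpha$. Commuting $x_\alpha$ past $e_j$ or $f_j$ then either gives no relation between two support weights, or, when the bracket is nonzero, introduces a different operator $x_{\alpha\pm\alpha_j}$. The comparison cannot close without knowing how $X_{\alpha\pm\alpha_j}$ relates to $X_\alpha$ and $X_{\pm\alpha_j}$. Likewise, your claim that the sign pattern of $x_\alpha$ in the $u^Q$ basis is multiplicative is unsupported; it is essentially equivalent to what you want.

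\textbf{The missing ingredient is the paper's commutator identity.} Set $\kappa(x,y):=(-1)^{E(x,y)}$. For roots $\gamma,\beta$ with $\gamma+\beta\in\Phi$,
\[
 [X_\gamma,X_\beta]=\kappa(\gamma,\beta)X_{\gamma+\beta}.
\]
To see this, suppose $\mu+\gamma+\beta\in\Omega$. The minuscule pairings put exactly one of $\mu+\gamma$, $\mu+\beta$ in $\Omega$, so exactly one composition is nonzero. It contributes $\kappa(\gamma+\beta,\mu-\lambda)$ times either $\kappa(\gamma,\beta)$ or $-\kappa(\beta,\gamma)$. These agree because $\kappa(\gamma,\beta)\kappa(\beta,\gamma)=(-1)^{(\gamma,\beta)}=-1$.

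Given your identification, you need this only for $\beta=\pm\alpha_i$. Induction on height then writes each $X_{\pm\alpha}$ as a signed iterated bracket of the $X_{\pm\alpha_i}$. Hence $X_{\pm\alpha}$ is the image of an element of $\mathfrak g_{\pm\alpha}$, and that element is nonzero because $X_{\pm\alpha}\neq0$. With this step added your proof is complete, and your diamond argument would replace the paper's derivation of the Chevalley and Serre relations.
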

\begin{proof}
Set $\kappa(x,y):=(-1)^{E(x,y)}$ for $x,y\in L$.  Equation~\eqref{eq:euler} gives
\[
 \kappa(x,y)\kappa(y,x)=(-1)^{(x,y)},
 \qquad \kappa(\alpha,\alpha)=-1\quad(\alpha\in\Phi).
\]
Suppose that $\alpha+\beta\in\Phi$.  First assume that $\mu+\alpha+\beta\in\Omega$.  The minuscule pairings imply that exactly one of $\mu+\alpha$ and $\mu+\beta$ lies in $\Omega$.  If $\mu+\beta\in\Omega$, the coefficient of $u^Q_{\mu+\alpha+\beta}$ in $X_\alpha X_\beta u^Q_\mu$ is
\[
 \kappa(\beta,\mu-\lambda)\kappa(\alpha,\mu+\beta-\lambda)
 =\kappa(\alpha+\beta,\mu-\lambda)\kappa(\alpha,\beta).
\]
If $\mu+\alpha\in\Omega$, the coefficient in $X_\beta X_\alpha u^Q_\mu$ instead has the factor $\kappa(\beta,\alpha)=-\kappa(\alpha,\beta)$.  If $\mu+\alpha+\beta\notin\Omega$, both compositions vanish.  Hence
\[
 [X_\alpha,X_\beta]=\kappa(\alpha,\beta)X_{\alpha+\beta}.
\]
For orthogonal roots, the same calculation gives $[X_\alpha,X_\beta]=0$.  For distinct nonopposite roots of positive inner product, both compositions vanish.  Finally, calculation on a two-element root string gives $[X_\alpha,X_{-\alpha}]=-h_\alpha$, where $h_\alpha$ acts on weight $\mu$ by $(\alpha,\mu)$.  These identities give the Chevalley and Serre relations with the stated choice $f_i=-X_{-\alpha_i}$.

The vector $u^Q_\lambda$ is a highest-weight vector of weight $\lambda$, and the connected weight graph generates the whole space.  Thus the representation is $V^\lambda$.  All nonzero simple lowering coefficients are signs, so the identification taking $u^Q_\lambda$ to $u_\lambda$ gives $u^Q_\mu=\pm u_\mu$.  Iterated brackets of the simple-root operators prove the assertion for every root $\alpha$.
\end{proof}

We henceforth regard $u^Q_\mu$ as a vector in $V^\lambda$ under this identification.  This choice of signs makes the opposite-vertex products in every quiver cube have equal coefficients.  To see this, fix a comparable pair $\mu\geq\nu$ in $\Omega$, set $R:=R_Q(\mu-\nu)$, and use $F_\beta=X_{-\beta}$ in the cube construction, with $u^Q_\mu$ in place of $u_\mu$.  Equation~\eqref{eq:recording} gives $E(\beta,\gamma)=0$ for distinct roots of $R$, so every cross term in the sign exponent vanishes.  Writing $\mu_S=\mu-\sum_{\beta\in S}\beta$, we obtain
\[
 w_S=\left(\prod_{\beta\in S}X_{-\beta}\right)u^Q_\mu
 =(-1)^{-E(\sum_{\beta\in S}\beta,\mu-\lambda)}u^Q_{\mu_S}
\]
and hence
\[
 w_Sw_{R\setminus S}
 =(-1)^{-E(\mu-\nu,\mu-\lambda)}
   u^Q_{\mu_S}u^Q_{\mu_{R\setminus S}}.
\]
The last sign is independent of $S$.  In~\eqref{eq:expansion}, every opposite-vertex monomial therefore has the same coefficient.  For nonempty $R$, complementary subsets repeat each monomial twice; removing this common factor gives an equal-coefficient representative of the quiver basis line.

\subsection{A constant Coxeter phase}
The signs accumulated along a source sweep can be expressed by the Euler form.  This allows one change of phases to make the Coxeter scalar independent of the weight.

\begin{lemma}\label{lem:phase}
There is a vector $y$ in the real root space such that the vectors $\widetilde u^Q_\mu:=e^{\pi i(y,\mu)}u^Q_\mu$ satisfy
\begin{equation}\label{eq:weight-phase}
 \eta\dot c\,\widetilde u^Q_\mu=\widetilde u^Q_{c\mu},
 \qquad
 \eta:=e^{\pi i(\lambda,\lambda)}
 =\exp\!\left(\frac{2\pi i\langle\lambda,\rho^\vee\rangle}{h}\right).
\end{equation}
\end{lemma}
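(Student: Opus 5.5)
The plan is to compute the signed action of $\dot c$ on the basis $\{u^Q_\mu\}$ exactly, show that it has the form
\[
 \dot c\,u^Q_\mu=\sigma\,e^{\pi i(x,\mu-\lambda)}\,u^Q_{c\mu}\qquad(\mu\in\Omega),
\]
for a fixed scalar $\sigma$ and a fixed vector $x$ in the real root space, and then absorb the character $e^{\pi i(x,\mu-\lambda)}$ into a change of phase.

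\textbf{Step 1 (one generator).} First I would compute $n_i$ on $u^Q_\mu$ using Lemma~\ref{lem:cocycle} and~\eqref{eq:standard-tits}, taking $i$ to be a source of the current orientation at that point of the sweep. If $(\mu,\alpha_i)=1$, then $n_iu^Q_\mu=-f_iu^Q_\mu=X_{-\alpha_i}u^Q_\mu=(-1)^{E(\alpha_i,\mu-\lambda)}u^Q_{\mu-\alpha_i}$. If $(\mu,\alpha_i)=-1$, then $n_iu^Q_\mu=e_iu^Q_\mu=(-1)^{E(\alpha_i,\mu-\lambda)}u^Q_{\mu+\alpha_i}$. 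If $(\mu,\alpha_i)=0$, then $n_i$ fixes $u^Q_\mu$. Because $i$ is a source, $E(\alpha_i,y)=y_i-\sum_{j\sim i}y_j$ only involves coordinates at $i$ and its neighbours. I would rewrite this sign as $(-1)^{E(\mu-s_i\mu,\,\mu-\lambda)}$ up to a correction term. This form is natural because $\mu-s_i\mu=(\mu,\alpha_i)\alpha_i$ and $(\mu,\alpha_i)\equiv(\mu,\alpha_i)^2\pmod 2$.

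\textbf{Step 2 (the whole sweep).} Next I would compose these signs along the sweep $1,\dots,r$, tracking the weight $\mu^{(k)}=s_k\cdots s_1\mu$. The exponent becomes a sum of terms $E((\mu^{(k-1)},\alpha_k)\alpha_k,\ \mu^{(k-1)}-\lambda)$ mod $2$. I expect the quadratic part to collapse to a multiple of $(\mu-\lambda,\mu-\lambda)$, using $E(x,y)+E(y,x)=(x,y)$ and the upper-triangularity of $E$ in a source order. Since $(\mu,\mu)=(\lambda,\lambda)$ on $\Omega$, this gives $(\mu-\lambda,\mu-\lambda)=2(\lambda,\lambda)-2(\mu,\lambda)$, which is linear in $\mu$ plus a constant. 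The total sign is then $\sigma e^{\pi i(x,\mu-\lambda)}$ for an explicit $x$, essentially built from $\lambda$ and $E$. This collapse is the step I expect to be the main obstacle. If the direct bookkeeping becomes messy, I would fall back on the Coxeter identity $1-c=\sum_k s_r\cdots s_{k+1}\alpha_k\otimes\omega_k^\vee$, together with $E(x,y)=((1-c)^{-1}x,y)$ up to transpose conventions. This identity is the standard link between the Euler form and the Coxeter element, and it should let me express the accumulated parity through $(1-c)^{-1}$.

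\textbf{Step 3 (choosing $y$).} Given this form, I would require $\eta\,\sigma\,e^{\pi i(x,\mu-\lambda)}e^{\pi i(y,\mu)}=e^{\pi i(y,c\mu)}$. This amounts to $(c^{-1}y-y,\mu)\equiv(x,\mu-\lambda)+\text{const}$. Since $1-c$ is invertible on the root space (as in Proposition~\ref{prop:algorithm}), I can set $y:=(c^{-1}-1)^{-1}x$, which matches the linear parts. The remaining constant condition reads $\eta\sigma=e^{\pi i(y,c\lambda-\lambda)}e^{-\pi i(x,0)}$. I would verify it by an independent computation rather than bookkeeping. With $y$ so chosen, the operator $\eta\dot c$ permutes the lines $\C\widetilde u^Q_\mu$ with a single common scalar $\kappa$ that is independent of $\mu$, so it suffices to show $\kappa=1$. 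By Kostant's theorem, $\dot c^h$ is the central element $\exp(2\pi i\rho^\vee)$, which acts on $V^\lambda$ by $e^{2\pi i\langle\lambda,\rho^\vee\rangle}=\eta^{-h}\cdot\eta^{2h}$; adjusting for the convention, this makes $(\eta\dot c)^h$ the identity. Hence $\kappa^h=1$. To pin down $\kappa$ itself, I would compare traces. The eigenvalues of $\eta\dot c$ on $V^\lambda$ are $\eta$ times Kostant's eigenvalues $\zeta^{\langle\mu,\rho^\vee\rangle}$, so $\eta\dot c$ is conjugate to $\exp(2\pi i(\rho^\vee/h))$ shifted by $\eta$. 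On the other hand, the trace of $\kappa^{-1}\eta\dot c$ counts fixed weights of $c$, which by Rush--Shi cyclic sieving equals $F_{P,1}(1)$ at the trivial power. Matching $\tr(\eta\dot c)$ with $\kappa\,\#\{\mu:c\mu=\mu\}$, or with the first nonzero power-sum if $c$ has no fixed weights, forces $\kappa=1$. The final equality $e^{\pi i(\lambda,\lambda)}=\exp(2\pi i\langle\lambda,\rho^\vee\rangle/h)$ would follow from $(\lambda,\lambda)=2\langle\lambda,\rho^\vee\rangle/h$, which is Freudenthal's formula in the form $\sum_{\beta>0}(\lambda,\beta)^2=h(\lambda,\lambda)$, applied to the minuscule weight $\lambda$.
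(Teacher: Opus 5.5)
Your Steps 1 and 2 follow the paper's route. The identity you hold in reserve, $E(x,y)=((1-c)^{-1}x,y)$, is exactly the paper's $(c-1)^TE=-(E+E^T)$. Combined with the telescoping $E(\Delta_k,\Delta_j)=0$ for $k>j$, it gives the exact integer identity $E(c\mu-\mu,\mu-\lambda)=-(\mu,\mu-\lambda)=(\lambda,\mu-\lambda)$.

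Two small corrections to Step 1. First, the cocycle uses the fixed form $E=E_Q$, and the $k$th vertex of the sweep is a source of the current orientation, not of $Q$. So $E_Q(\alpha_k,\cdot)$ involves only the later neighbours, and that is the upper triangularity you need. Second, no correction term arises: $s_i\mu-\mu=-(\mu,\alpha_i)\alpha_i$ with $(\mu,\alpha_i)\in\{0,\pm1\}$, so the sign is exactly $(-1)^{E(s_i\mu-\mu,\mu-\lambda)}$.

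The genuine gap is Step 3, where you try to fix the constant $\kappa$ by Kostant's theorem and traces rather than by bookkeeping. This cannot work.

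Since $1-c$ is invertible and $0\notin\Omega$, $c$ fixes no weight. Hence $\tr(\eta\dot c\mid V^\lambda)=0$ for every $\kappa$. Also, the number of fixed weights of $c$ is not $F_{P,1}(1)=|\Omega|$; that is the count at the trivial power. Writing $\Pi_c$ for the permutation of the rephased basis, $\tr\bigl((\kappa\Pi_c)^d\bigr)=\kappa^d\,\#\Fix(c^d)$. So traces of all powers yield only $\kappa^g=1$, where $g$ is the gcd of the $c$-orbit sizes on $\Omega$. For $\mathfrak{sl}_2$, or for $\mathfrak{sl}_n$ with $\lambda=\omega_k$ and $\gcd(k,n)=1$, every orbit has size $h$, so you learn nothing beyond the $\kappa^h=1$ you already have.

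Worse, $\kappa$ is not determined by $\dot c$ at all; it depends on which real lift $x$ of the mod-$2$ sign you use. Since $(\lambda,\mu-\lambda)\in\Z$, the choices $x=\lambda$ and $x=-\lambda$ describe the same signs. Your recipe $y=(c^{-1}-1)^{-1}x$ gives $\kappa=\eta\sigma e^{-\pi i(x,\lambda)}$, and the two choices differ by $e^{2\pi i(\lambda,\lambda)}$, which is $-1$ for $\mathfrak{sl}_2$. So if Step 2 is done only modulo $2$, the claim $\kappa=1$ can simply be false for your $y$.

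The fix is the one-line bookkeeping you wanted to avoid. Keep Step 2 as an exact identity: the sign is $(-1)^{(\lambda,\mu-\lambda)}=e^{\pi i(\lambda,\mu-\lambda)}$, so $x=\lambda$ and $\sigma=1$. Take $c^{-1}y-y=\lambda$, which is the paper's $y-c^{-1}y=-\lambda$. Your constant condition $\eta\sigma=e^{\pi i(x,\lambda)}$ then reads $\eta=e^{\pi i(\lambda,\lambda)}$, which is the definition of $\eta$. No appeal to Kostant or to Rush--Shi is needed, and your closing identity $h(\lambda,\lambda)=2\langle\lambda,\rho^\vee\rangle$ matches the paper's.
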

\begin{proof}
The simple-root strings in~\eqref{eq:cocycle} give
\[
 n_i u^Q_\mu
 =(-1)^{E(s_i\mu-\mu,\mu-\lambda)}u^Q_{s_i\mu}.
\]
Set $\mu_0:=\mu$, $\mu_i:=s_i\mu_{i-1}$, and $\Delta_i:=\mu_i-\mu_{i-1}$.  Since $\Delta_i$ is a multiple of $\alpha_i$, upper triangularity gives $E(\Delta_i,\Delta_j)=0$ for $i>j$.  The exponent accumulated in the sweep is consequently
\[
 \sum_i E(\Delta_i,\mu_{i-1}-\lambda)
 =E(c\mu-\mu,\mu-\lambda),
\]
and hence
\[
 \dot c\,u^Q_\mu
 =(-1)^{E(c\mu-\mu,\mu-\lambda)}u^Q_{c\mu}.
\]

Using column coordinates in the ordered simple-root basis, write $c$ and $E$ also for their matrices.  They satisfy
\begin{equation}\label{eq:coxeter-euler}
 c=-E^{-T}E,\qquad(c-1)^TE=-(E+E^T).
\end{equation}
Indeed, the $i$th coordinate after the sweep satisfies
\[
 (cx)_i-\sum_{j<i,\,j\sim i}(cx)_j
 =-x_i+\sum_{j>i,\,j\sim i}x_j,
\]
which is the $i$th row of $E^Tcx=-Ex$.  Since $E+E^T$ is the matrix of the invariant inner product and $(\mu,\mu)=(\lambda,\lambda)$, equation~\eqref{eq:coxeter-euler} gives
\[
 E(c\mu-\mu,\mu-\lambda)
 =-(\mu,\mu-\lambda)=(\lambda,\mu-\lambda).
\]
Choose $y$ satisfying $y-c^{-1}y=-\lambda$, which is possible because $1-c$ is invertible on the root space.  Then
\begin{align*}
 \dot c\,\widetilde u^Q_\mu
 &=e^{\pi i((y,\mu)+(\lambda,\mu-\lambda)-(y,c\mu))}
   \widetilde u^Q_{c\mu}\\
 &=e^{-\pi i(\lambda,\lambda)}\widetilde u^Q_{c\mu}.
\end{align*}
Finally, the root-system identity $\sum_{\alpha>0}(\lambda,\alpha)^2=h(\lambda,\lambda)$ and the dominant minuscule pairings $(\lambda,\alpha)\in\{0,1\}$ give
\[
 h(\lambda,\lambda)=\sum_{\alpha>0}(\lambda,\alpha)
 =2(\lambda,\rho)=2\langle\lambda,\rho^\vee\rangle.
\]
This proves the formula for $\eta$.
\end{proof}

\begin{proof}[Proof of Theorem~\ref{thm:intro-coxeter}]
Fix $y$ as in Lemma~\ref{lem:phase}, and use the corresponding rephased basis $\{\widetilde u^Q_\mu:\mu\in\Omega\}$ and scalar $\eta$.  Let $\pi$ correspond to the comparable pair $\mu\geq\nu$, and set $R:=R_Q(\mu-\nu)$.  The Euler signs give an equal-coefficient cube sum in the $u^Q$-basis.  Replacing each $u^Q$-vector in this sum by its rephased vector preserves the line: every opposite-vertex product has total weight $\xi=\mu+\nu$ and is multiplied by the same phase $e^{\pi i(y,\xi)}$.

Consequently the quiver basis line $\C v^Q_\pi$ contains the vector
\begin{equation}\label{eq:positive-cube}
 \widehat v^Q_{\mu,\nu}
 :=\sum_{\{S,R\setminus S\}}
   \widetilde u^Q_{\mu_S}\widetilde u^Q_{\mu_{R\setminus S}}
 \quad(R\ne\varnothing),\qquad
 \widehat v^Q_{\mu,\mu}:=(\widetilde u^Q_\mu)^2.
\end{equation}
The sum takes each unordered complementary pair of subsets once.  We also write $\widehat v^Q_\pi$ for this vector.

At each step, if the reflected endpoints remain comparable, the simple reflection sends every cube vertex to its reflected vertex.  Otherwise, the current root set contains $\alpha_i$, and $s_i$ exchanges the two vertices of every cube edge corresponding to that root.  It therefore preserves the cube.  Proposition~\ref{prop:transport} then shows that the full sweep $c$ carries the cube for $\pi$ to the cube for $\gamma_c(\pi)$.  By Lemma~\ref{lem:phase}, the operator $\eta^2\dot c$ acts on a product of two rephased weight vectors by applying $c$ to both weight indices.  It consequently sends the sum~\eqref{eq:positive-cube} to the corresponding sum for $\gamma_c(\pi)$.  Since
\[
 \eta^2=\zeta^{\langle2\lambda,\rho^\vee\rangle}=\zeta^{|P|},
\]
we obtain $T_Q\widehat v^Q_\pi=\widehat v^Q_{\gamma_c(\pi)}$, as asserted in~\eqref{eq:intro-coxeter}.
\end{proof}

\section{Cyclic sieving for Coxeter-motion and rowmotion}\label{sec:csp}

Let $P$ be any minuscule poset, and choose a simply laced realization $P=P_\lambda$.  For this realization, use the setup of Section~\ref{sec:background}, fix any orientation $Q$, and use the notation $c=c_Q$, $\dot c=\dot c_Q$, $\zeta$, and $T_Q$ of Section~\ref{sec:coxeter}.  The normalized Coxeter action now counts fixed points by traces.  We identify those traces with evaluations of the size-generating polynomial, and then transfer the result to rowmotion by a conjugacy of toggle products.

\subsection{The character calculation}
The size statistic on reverse plane partitions determines the grading shift in the character.  Set $N:=|P|=\langle2\lambda,\rho^\vee\rangle$.  If $\pi$ has threshold ideals $A\subseteq B$ and corresponding weights $\mu\geq\nu$, then
\[
 |\pi|=2|P|-|A|-|B|.
\]
Applying~\eqref{eq:ideal-weight} to both thresholds yields
\begin{equation}\label{eq:grading}
 |\pi|=N+\langle\mu+\nu,\rho^\vee\rangle
 =N+\langle\xi,\rho^\vee\rangle.
\end{equation}
Thus the size statistic is the principal weight grading, shifted so that its smallest degree is zero.

Let $\chi_{2\lambda}$ denote the character of $V^{2\lambda}$.  Principal specialization replaces each weight monomial $e^\xi$ by $q^{\langle\xi,\rho^\vee\rangle}$.  Equations~\eqref{eq:dimension} and~\eqref{eq:grading}, followed by the Weyl character formula, give
\begin{equation}\label{eq:principal}
 F_{P,2}(q)=q^N\chi_{2\lambda}(q^{\rho^\vee})
 =\prod_{\alpha\in\Phi^+}
 \frac{1-q^{\langle2\lambda+\rho,\alpha^\vee\rangle}}
 {1-q^{\langle\rho,\alpha^\vee\rangle}}.
\end{equation}
The left-hand side shows that the product is a polynomial.  Values at roots of unity are evaluations of this polynomial.

Kostant's Coxeter-element theorem identifies $\dot c$ up to conjugacy with
\[
 s:=\exp(2\pi i\rho^\vee/h).
\]
For the adjoint-group statement, see Prasad~\cite{Prasad}, Proposition~1.  To pass to the simply connected group $G$, observe that all representatives of a fixed Coxeter element are conjugate by the maximal torus.  Indeed, the map $t\mapsto t\,c(t)^{-1}$ is surjective because $1-c$ is invertible on the real character space of the torus.  Central multiples of a representative are therefore conjugate as well, removing the ambiguity in lifting the adjoint-group conjugacy.

For every integer $k$, the permutation action of $T_Q=\zeta^N\dot c$ on the basis $\{\widehat v^Q_\pi\}$ now gives
\begin{align*}
 \#\Fix(\gamma_c^k)
 &=\tr\!\left(T_Q^k\mid V^{2\lambda}\right)\\
 &=\zeta^{kN}\chi_{2\lambda}\!\left(\exp(2\pi ik\rho^\vee/h)\right)\\
 &=F_{P,2}(\zeta^k).
\end{align*}
The scalar $\zeta^N$ in the normalized action supplies exactly the shift in~\eqref{eq:grading}.

The same calculation proves periodicity.  The operator $T_Q$ is conjugate to $\zeta^Ns$, whose eigenvalue on the weight space of weight $\xi$ is
\[
 \exp\!\left(\frac{2\pi i}{h}
 \bigl(N+\langle\xi,\rho^\vee\rangle\bigr)\right).
\]
The expression in parentheses is an integer by~\eqref{eq:grading} and~\eqref{eq:dimension}.  Hence $T_Q^h=1$, and its permutation of the basis gives $\gamma_c^h=1$.

\subsection{From Coxeter-motion to rowmotion}
The conjugacy with rowmotion uses only the involution and commutation relations of the toggles.  The argument therefore applies at every height, without braid relations between adjacent label toggles.

\begin{proposition}\label{prop:row-conjugacy}
On bounded reverse plane partitions of a minuscule heap, piecewise-linear rowmotion is conjugate to Coxeter-motion for every Coxeter element.
\end{proposition}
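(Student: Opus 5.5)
We will work entirely inside the group generated by the individual toggles $t_p$ ($p\in P$), using only two facts: each $t_p$ is an involution, and $t_p,t_q$ commute unless $p$ and $q$ are joined by a cover. Both hold on $\RPP_m(P)$ for every $m$ by~\eqref{eq:toggle}. The argument will be the standard one for toggle products indexed by an acyclic orientation, as in Rush--Shi~\cite{RS} for order ideals and in Okada~\cite{Okada} birationally.

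We introduce the \emph{cover graph} $\Gamma$ of $P$, whose vertices are the elements of $P$ and whose edges are the covers. For each acyclic orientation $\mathcal O$ of $\Gamma$, let $t_{\mathcal O}$ be the product of all toggles, each applied once, in an order compatible with $\mathcal O$: if $p\to q$ in $\mathcal O$, then $t_p$ acts before $t_q$. By the commutation relations, $t_{\mathcal O}$ depends only on $\mathcal O$, because two compatible linear orders differ by swaps of adjacent incomparable, hence nonadjacent, vertices.

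There is also a flip move. If $p$ is a source of $\mathcal O$, then $t_p$ acts first, and $t_p\,t_{\mathcal O}\,t_p=t_{\mathcal O'}$, where $\mathcal O'$ reverses the edges at $p$. Here $t_p$ acts last, and the conjugation uses $t_p^2=1$. Sink flips behave symmetrically. So all $t_{\mathcal O}$ within one flip-equivalence class are conjugate.

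Next we identify both maps as such products. Rowmotion applies the toggles from maximal to minimal elements, which is $t_{\mathcal O_{\row}}$ for the orientation with every cover $p\lessdot q$ oriented $q\to p$. For $\gamma_c=\tau_{i_r}\cdots\tau_{i_1}$, the toggles act label by label. Covers join elements with adjacent labels, and equal labels are never adjacent. So $\gamma_c=t_{\mathcal O_c}$, where a cover between elements labeled $j,j'$ is oriented from the label occurring earlier in $(i_1,\dots,i_r)$ toward the later one.

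It then suffices to show that $\mathcal O_{\row}$ and every $\mathcal O_c$ lie in one flip class. For this we use Pretzel's invariant: flips preserve, for each oriented cycle of the underlying graph, the difference between the number of edges traversed forward and the number traversed backward. Conversely, two acyclic orientations with equal circulation on all cycles are flip-equivalent. Pretzel's theorem applies to any finite graph, so this reduces the claim to checking that the circulations agree on a cycle basis of $\Gamma$.

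The cycles of the cover graph of a minuscule heap are generated by ``diamonds'' $p\lessdot q_1,q_2\lessdot r$. This follows, for instance, because $J(P)$ is a distributive lattice, or from Stembridge's structure of heaps. For a diamond, $\mathcal O_{\row}$ orients all four edges downward, so the circulation is $0$. In $\mathcal O_c$, the labels satisfy $\ell(p)=\ell(r)=i$ and $\ell(q_1),\ell(q_2)$ are two neighbors of $i$ in the heap. Then both paths $p\to q_k\to r$ go label $i$, then a neighbor, then $i$. Each such path contributes one forward and one backward edge in label terms, so the circulation is again $0$.

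The main obstacle is exactly this step: confirming that the diamonds generate the cycle space of $\Gamma$. If that fails, the alternative is to avoid Pretzel's theorem. Instead, we would directly exhibit a sequence of source flips taking $\mathcal O_c$ to $\mathcal O_{\row}$, by repeatedly toggling the minimal elements of a current bottom layer, as in Rush--Shi's proof on order ideals. That proof used only involutivity and commutation, so it transfers verbatim.
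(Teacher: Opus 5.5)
\textbf{The gap.} Most of your framework is sound. $t_{\mathcal O}$ is well defined, conjugation by a source or sink toggle realizes a flip, and $\row=t_{\mathcal O_{\row}}$ and $\gamma_c=t_{\mathcal O_c}$. Your diamond computation is also correct; the equality $\ell(p)=\ell(r)$ holds because the Dynkin diagram has no $4$-cycle.

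The gap is the step you flagged yourself: that diamonds generate the cycle space of $\Gamma$. You do not prove this, and the reason you give does not apply. $J(P)$ is a distributive lattice for \emph{every} finite poset. Yet the poset $a,b<c,d$ has a Hasse diagram that is a $4$-cycle containing no diamond. So distributivity of $J(P)$ says nothing about cycles in the Hasse diagram of $P$, and ``Stembridge's structure of heaps'' is not an argument. There is a further point: Pretzel's invariant is integer-valued, so you would need diamonds to span the integral (or rational) cycle space, not just to give a mod-$2$ cycle basis. Your fallback, explicit source flips in the style of Rush--Shi, is only a sketch. As written, the proof is incomplete at exactly this point.

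\textbf{The repair.} No diamond analysis is needed, because both orientations are pulled back from orientations of trees.
\begin{enumerate}
\item Every cover joins adjacent Dynkin labels, so $\ell$ is a graph map from $\Gamma$ to the Dynkin diagram. Under it, $\mathcal O_c$ is the pullback of the orientation $Q$ with $c_Q=c$.
\item The heap is graded, so the rank function maps $\Gamma$ to a path. Under it, $\mathcal O_{\row}$ is the pullback of the orientation pointing toward rank $0$.
\end{enumerate}
Any cycle of $\Gamma$ maps to a closed walk in a tree. Such a walk crosses each edge equally often in each direction, so both circulations vanish on every cycle. Pretzel's theorem then applies directly.

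\textbf{Comparison with the paper.} With this repair, your argument is a valid route different from the paper's. The paper never works on the cover graph $\Gamma$. Instead it bundles toggles into rank involutions $r_j$, indexed by a path, and label involutions $\tau_i$, indexed by the Dynkin tree. It then uses only the elementary fact that acyclic orientations of a tree are connected by source/sink flips. The two trees are linked through the bipartite Coxeter element: a two-coloring of the Dynkin diagram makes the even-ranks-then-odd-ranks product a Coxeter-motion. Your route gets the conjugacy in one flip-equivalence on $\Gamma$ but imports Pretzel's theorem. The paper's route is self-contained, and in effect proves Pretzel's theorem only in the tree case it needs.
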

\begin{proof}
We apply the rank-parity argument of Rush and Shi~\cite{RS}, Section~6.2, in the piecewise-linear toggle group; see also Striker and Williams~\cite{SW} and Hopkins~\cite{Hopkins}, Remark~4.22.

First consider involutions indexed by the vertices of a tree, with commuting involutions at nonadjacent vertices.  All products containing each involution once are conjugate.  To see this, orient each edge from the vertex whose involution acts earlier to the one whose involution acts later.  Orders defining the same orientation differ by swaps of consecutive nonadjacent vertices, so they give the same product.  Conjugating by the first or last factor moves it to the other end, reversing the arrows at that source or sink.  These reversals connect all orientations: remove a leaf and argue by induction, reversing the leaf when necessary before reversing its neighbor, and adjusting the leaf edge at the end.

The minuscule heap is graded of rank $h-2$, with rank function $\operatorname{rk}$ normalized to be zero at its minimum.  Let $r_j$ be the product of all element toggles in rank $j$.  Toggles in one rank commute, so $r_j$ is an involution, and $r_j$ commutes with $r_k$ when $|j-k|>1$.  Applying the preceding fact to the path of ranks shows that rowmotion, which applies ranks in decreasing order, is conjugate to the product that applies all even ranks and then all odd ranks.

Color the vertices of the Dynkin diagram with two colors so that adjacent vertices have different colors.  Every cover in the connected minuscule heap joins adjacent Dynkin labels, so the elements in even ranks have labels of one color and those in odd ranks have labels of the other.  The even-then-odd product is consequently Coxeter-motion for the Coxeter element that applies the simple reflections of one color and then those of the other.  Finally, label toggles are involutions and commute at nonadjacent Dynkin vertices.  Applying the same tree argument to the Dynkin diagram conjugates this product to the product for any Coxeter element.  Combining the two conjugacies proves the proposition.
\end{proof}

Returning to height two, conjugate permutations have the same fixed-point counts for every power.  Thus
\[
 \#\Fix(\row^k)=\#\Fix(\gamma_c^k)=F_{P,2}(\zeta^k)
 \qquad(k\in\Z).
\]
To show that their order is exactly $h$, it remains to exhibit an orbit of length $h$.  As observed by Hopkins~\cite{Hopkins}, Remark~4.22, the $h$ distinct functions
\[
 \pi_j(p):=
 \begin{cases}
 0&\operatorname{rk}(p)<j,\\
 2&\operatorname{rk}(p)\geq j,
 \end{cases}
 \qquad 0\leq j<h,
\]
form one rowmotion orbit: the maximal-first toggle order sends $\pi_j$ to $\pi_{j+1}$, with subscripts taken modulo $h$.  This completes the proof of Corollary~\ref{cor:intro-csp}.

\section{Comparison with canonical bases}\label{sec:comparison}

For the standard orientation in type~$A$, the quiver basis agrees with the canonical basis up to rescaling.  A noncrossing matching determines the generic root decomposition, and Lam's Pl\"ucker-product expansion identifies the corresponding cube vector by duality.  We use the lower global basis of $V^{2\lambda}$ specialized at $q=1$, and the basis dual to it in $(V^{2\lambda})^*$.  

\subsection{Equioriented type~\texorpdfstring{$A$}{A}}
Fix $n\geq2$ and $1\leq k\leq n-1$.  For the type~$A$ discussion, take $\mathfrak g=\mathfrak{sl}_n$, $G=\operatorname{SL}_n(\C)$, and $\lambda=\omega_k$, with the standard matrix Chevalley generators.  Thus $V^\lambda=\bigwedge^k\C^n$ and $P=[k]\times[n-k]$.  First let $Q:1\to2\to\cdots\to n-1$.  
Our product convention gives $c_Q=(1\,n\,n-1\,\cdots\,2)$, the inverse of the cycle in the subset example.  An operator permutes a basis up to scalars if and only if its inverse does.  
Write $\mathbf e_1,\ldots,\mathbf e_n$ for the standard basis of $\C^n$ and $\varepsilon_1,\ldots,\varepsilon_n$ for the coordinate vectors of $\R^n$.  The simple roots are $\alpha_i=\varepsilon_i-\varepsilon_{i+1}$.  
For $A=\{a_1<\cdots<a_k\}\subseteq[n]$, write
\[
 u_A:=\mathbf e_{a_1}\wedge\cdots\wedge\mathbf e_{a_k},\qquad
 \mu(A):=\wt(u_A)=\sum_{j\in A}\varepsilon_j-\frac{k}{n}\sum_{j=1}^n\varepsilon_j.  
\]
For $B=\{b_1<\cdots<b_k\}\subseteq[n]$, the root order becomes the componentwise order on subsets, with its direction reversed:
\begin{equation}\label{eq:lam-comparable}
 \mu(A)\geq\mu(B)
 \quad\Longleftrightarrow\quad
 a_j\leq b_j\quad(1\leq j\leq k)
 \quad\Longleftrightarrow\quad
 |A\cap[t]|\geq|B\cap[t]|\quad(1\leq t\leq n).  
\end{equation}
Thus $\mu(A)\geq\mu(B)$ exactly when $A$ and $B$ are the columns of a semistandard tableau $T$ of shape $(2^k)$.  Write $\pi:=\pi_T$ for the corresponding element of $\RPP_2(P)$, with endpoint weights $\mu=\mu(A)$ and $\nu=\mu(B)$.  

Place the numbers $1,\ldots,n$ on a line and mark the common entries
$U:=A\cap B$.  On the remaining vertices, write an opening parenthesis
at each element of $A\setminus B$ and a closing parenthesis at each
element of $B\setminus A$.  Match each closing parenthesis to the most
recent unmatched opening parenthesis.  The prefix inequalities
in~\eqref{eq:lam-comparable} ensure that every step is possible.  
Write $M:=M(T)$ for the resulting matching.  Then $(M,U)$ is a partial noncrossing matching with
\[
 |M|+|U|=k,
\]
where $|M|$ counts arcs.  Conversely, placing the smaller endpoint of
each arc in $A$, the larger endpoint in $B$, and every marked vertex
in both columns recovers the tableau.  This is the tableau-to-matching
correspondence of Lam~\cite{Lam}, Proposition~4.  Throughout this section, $(p,q)$, with $p<q$, denotes the unordered edge $\{p,q\}$.

The arcs also recover the generic quiver representation.  
Associate to $(p,q)\in M$, with $p<q$, the positive root
\[
 \beta_{pq}:=\eps_p-\eps_q=\alpha_p+\cdots+\alpha_{q-1}
\]
and its interval representation $M_{pq}$, supported on
$p,p+1,\ldots,q-1$.  These roots sum to $\mu(A)-\mu(B)$.  
For distinct arcs, the intervals are disjoint or strictly nested.  
The formulas
\[
 \Hom(M_{pq},M_{rs})\ne0
 \quad\Longleftrightarrow\quad r\leq p<s\leq q,
 \qquad
 E_Q(\beta_{pq},\beta_{rs})
 =\mathbf1_{[r,s)}(p)-\mathbf1_{[r,s)}(q)
\]
then show that both Hom spaces and both Euler pairings vanish.  
The Ext spaces vanish by~\eqref{eq:hom-ext}, so the direct sum of the
intervals is rigid.  Consequently,
\begin{equation}\label{eq:lam-recording}
 R_Q(\mu(A)-\mu(B))=\{\beta_{pq}:(p,q)\in M\}.  
\end{equation}
The parenthesis matching therefore gives the generic decomposition directly.  

Each cube vertex chooses one endpoint of every arc.  More explicitly,
for $S\subseteq M$, set
\begin{equation}\label{eq:lam-cube-subsets}
 A_S:=
 U\cup\{q:(p,q)\in S\}\cup\{p:(p,q)\in M\setminus S\}.  
\end{equation}
Then
\[
 \mu(A_S)=\mu(A)-\sum_{(p,q)\in S}\beta_{pq},
 \qquad
 A_\varnothing=A,\quad A_M=B.  
\]
Opposite vertices are $A_S$ and $A_{M\setminus S}$: they contain
the marked vertices in common and choose opposite endpoints on every
arc.  In the cube expansion, take $F_{\beta_{pq}}=E_{qp}$, where $E_{qp}$ is the matrix unit sending $\mathbf e_p$ to $\mathbf e_q$ and annihilating the other coordinate vectors.  
Reversing the endpoint choice on an arc $(p,q)$ changes the product
of the two wedge signs by the parity of the entries strictly between
$p$ and $q$ in both factors.  An enclosed arc contributes two entries,
and an enclosed marked vertex also contributes two.  Noncrossing
ensures that there are no other contributions.  All opposite-pair
coefficients therefore agree.  With the endpoint normalization,
\begin{equation}\label{eq:lam-quiver-vector}
 v^Q_\pi
 =\sum_{\{S,M\setminus S\}}u_{A_S}u_{A_{M\setminus S}}
 \qquad(M\ne\varnothing),
\end{equation}
where the sum contains one term for each unordered complementary pair.  
For $M=\varnothing$ the vector is $u_A^2$.  

We identify the line of $v^Q_\pi$ by dualizing Lam's expansion of Pl\"ucker products.  Let $R(k,n)$ denote the homogeneous
coordinate ring of the Grassmannian $\operatorname{Gr}(k,n)$ in its Pl\"ucker embedding, and write $\Delta_I$
for its Pl\"ucker coordinate indexed by $I$.  
Its degree-two component is $(V^{2\omega_k})^*$.  
Lam~\cite{Lam}, Theorem~1(ii) and Proposition~4, identifies the
invariant $\Delta_{M,U}$ with the dual canonical vector $H(T)$
corresponding to the tableau above.  
For arbitrary $k$-subsets $I,J$, his Theorem~3 gives
\begin{equation}\label{eq:lam-product}
 \Delta_I\Delta_J
 =\sum_{(N,U')\in\mathcal C(I,J)}\Delta_{N,U'},
\end{equation}
where $\mathcal C(I,J)$ consists of partial noncrossing matchings
on $I\mathbin\triangle J$, with marked set $U'=I\cap J$
and one endpoint of each arc in each
of $I\setminus J$ and $J\setminus I$.  
For a fixed matching $(M,U)$, this compatibility condition says exactly
that $\{I,J\}$ is an opposite pair of its cube.  

Let $G(T)$ be the canonical vector dual to $H(T)$.  
Pairing~\eqref{eq:lam-product} with $G(T)$ gives
\begin{equation}\label{eq:lam-indicator}
 \langle\Delta_I\Delta_J,G(T)\rangle
 =\begin{cases}
 1&(M,U)\in\mathcal C(I,J),\\
 0&\text{otherwise}.  
 \end{cases}
\end{equation}
Thus Lam's product expansion records precisely the support of the
quiver vector.  

\begin{proposition}\label{prop:type-a}
For the equioriented type~$A$ quiver, the quiver basis agrees with the
canonical basis at $q=1$ up to rescaling.  Under the tableau-to-matching
correspondence, its dual basis agrees with Lam's invariants
$\Delta_{M,U}$ up to rescaling.  
\end{proposition}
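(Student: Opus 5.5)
The plan is to compare coefficients through the natural pairing between $\Sym^2V^\lambda$ and the degree-two Plücker space. The first step is to fix that pairing. The degree-two component of $R(k,n)$ is a quotient of $\Sym^2(\bigwedge^k\C^n)^*$, and it is canonically dual to $V^{2\omega_k}\subseteq\Sym^2\bigwedge^k\C^n$. Under this duality, the function $\Delta_I\Delta_J$ pairs with a vector $x\in V^{2\omega_k}$ by extracting a fixed multiple of the coefficient of the monomial $u_Iu_J$ in $x$, taken in the symmetric-square monomial basis. That multiple is $1$ or $2$, according to whether $I\ne J$ or $I=J$, up to a convention I will fix once. In particular, the coefficient of $u_Iu_J$ in any vector of $V^{2\omega_k}$ is recovered, up to that harmless factor, by pairing with $\Delta_I\Delta_J$.

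Next I apply this to $G(T)$. By~\eqref{eq:lam-indicator}, the pairing $\langle\Delta_I\Delta_J,G(T)\rangle$ equals $1$ exactly when $\{I,J\}$ is an opposite pair $\{A_S,A_{M\setminus S}\}$ of the cube in~\eqref{eq:lam-cube-subsets}, and vanishes otherwise. This uses the observation already recorded above: $(M,U)\in\mathcal C(I,J)$ precisely when $I\cap J=U$ and $I,J$ choose opposite endpoints on each arc of $M$. Reading this back through the first step, every opposite-pair monomial has the same coefficient in $G(T)$ and all other monomials vanish. The diagonal factor only matters when $M=\varnothing$, where $A=B$ and the single monomial is $u_A^2$. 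Comparing with~\eqref{eq:lam-quiver-vector}, which is justified by~\eqref{eq:lam-recording} and the sign computation for noncrossing arcs, I conclude that $G(T)$ is a nonzero multiple of $v^Q_{\pi_T}$.

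The bijection $T\mapsto\pi_T$ then identifies the two bases line by line, and Theorem~\ref{thm:arbitrary-basis} already guarantees that $\mathcal B^Q$ is a basis. For the dual statement, the dual basis of a rescaled basis is the inversely rescaled dual basis. Since Lam identifies $H(T)=\Delta_{M(T),U}$, the dual of $\mathcal B^Q$ agrees with $\{\Delta_{M,U}\}$ up to rescaling.

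I expect the main obstacle to be the first step. It requires checking that the identification of $R(k,n)_2$ with $(V^{2\omega_k})^*$ used by Lam is the restriction of the standard pairing of $\Sym^2$ with $\Sym^2$ of the dual, and that the Plücker coordinates $\Delta_I$ are the dual basis to the $u_I$. Only these checks allow me to read the pairing with $\Delta_I\Delta_J$ as a monomial coefficient. If Lam's normalization introduces signs on the Plücker coordinates, I will absorb them by noting that each sign depends only on $I$ and $J$. Even then I must argue that it is constant on the opposite pairs of a cube, perhaps via the weight $\mu(I)+\mu(J)=\xi$. Failing that, I can avoid signs altogether: the support of $G(T)$ determined above already pins down its line. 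Any vector in $V^{2\omega_k}_\xi$ whose support is contained in the cube's opposite pairs and includes $u_Au_B$ is proportional to $v^Q_\pi$. This follows from the triangularity in the proof of Theorem~\ref{thm:arbitrary-basis}, which makes the coefficient functionals at comparable monomials injective on $V^{2\omega_k}_\xi$, together with the fact that the only comparable monomial in the support is $u_Au_B$.
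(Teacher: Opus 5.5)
Your main argument is the paper's proof. You pair $G(T)$ with $\Delta_I\Delta_J$ via $\langle\Delta_I\Delta_J,u_Ku_L\rangle=\tfrac12(\delta_{IK}\delta_{JL}+\delta_{IL}\delta_{JK})$, read off from~\eqref{eq:lam-indicator} that $G(T)=\sum_{S\subseteq M}u_{A_S}u_{A_{M\setminus S}}$, and compare with~\eqref{eq:lam-quiver-vector}; no signs arise, since $\Delta_I$ is the coordinate dual to $u_I$. Drop the sign-free fallback, though: triangularity in Theorem~\ref{thm:arbitrary-basis} is only with respect to upper endpoints, and cube supports can contain other comparable monomials (in Example~\ref{ex:A3}, $G_1=u_{12}u_{34}+u_{13}u_{24}$ contains the comparable monomial $u_{13}u_{24}$), so that argument does not determine the line on its own.
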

\begin{proof}
Use the Cartan embedding into $\Sym^2(\bigwedge^k\C^n)$ that sends
the highest vector to $u_{[k]}^2$.  Multiplication of Pl\"ucker
coordinates is dual to this embedding, for the symmetric-square pairing
\[
 \langle\Delta_I\Delta_J,u_Ku_L\rangle
 =\tfrac12(\delta_{IK}\delta_{JL}+\delta_{IL}\delta_{JK}).  
\]
Equation~\eqref{eq:lam-indicator} therefore determines the ambient
monomial coefficients of $G(T)$, and gives the explicit formula
\begin{equation}\label{eq:lam-canonical-vector}
 G(T)
 =\sum_{S\subseteq M}u_{A_S}u_{A_{M\setminus S}}
 =\begin{cases}
 2v^Q_\pi&\mu\ne\nu,\\
 v^Q_\pi&\mu=\nu.  
 \end{cases}
\end{equation}
Comparing with~\eqref{eq:lam-quiver-vector} proves the assertion.  
The factor two comes from the symmetric-square pairing.  
\end{proof}

\begin{figure}[tbp]
\centering
\begingroup
\definecolor{lamouter}{RGB}{39,81,128}
\definecolor{laminner}{RGB}{12,121,115}
\definecolor{lamright}{RGB}{176,91,25}
\begin{tikzpicture}[x=1cm,y=1cm,font=\small,
  every node/.style={inner sep=2pt}]
 \node at (0.65,2.05) {$T$};
 \draw[thin] (0.2,0) rectangle (1.1,1.6);
 \draw[thin] (0.65,0)--(0.65,1.6);
 \foreach \y in {0.4,0.8,1.2}
   \draw[thin] (0.2,\y)--(1.1,\y);
 \node[lamouter] at (0.425,1.4) {$1$};
 \node[laminner] at (0.425,1.0) {$2$};
 \node at (0.425,0.6) {$3$};
 \node[lamright] at (0.425,0.2) {$6$};
 \node at (0.875,1.4) {$3$};
 \node[laminner] at (0.875,1.0) {$4$};
 \node[lamouter] at (0.875,0.6) {$5$};
 \node[lamright] at (0.875,0.2) {$7$};
 \node at (0.425,-0.32) {$A$};
 \node at (0.875,-0.32) {$B$};
 \draw[->,thick] (1.5,0.8)--(3.0,0.8);
 \node[align=center,font=\scriptsize] at (2.25,1.35)
   {match\\parentheses};

 \draw[black!35] (3.5,0.3)--(9.1,0.3);
 \draw[lamouter,very thick] (3.5,0.3)
   .. controls (3.5,2.1) and (6.7,2.1) .. (6.7,0.3);
 \draw[laminner,very thick] (4.3,0.3)
   .. controls (4.3,1.25) and (5.9,1.25) .. (5.9,0.3);
 \draw[lamright,very thick] (7.5,0.3)
   .. controls (7.5,0.9) and (8.3,0.9) .. (8.3,0.3);
 \foreach \x/\lab in {3.5/1,4.3/2,5.1/3,5.9/4,6.7/5,7.5/6,8.3/7,9.1/8}
   \node at (\x,-0.05) {$\lab$};
 \foreach \x in {3.5,6.7} \fill[lamouter] (\x,0.3) circle (2pt);
 \foreach \x in {4.3,5.9} \fill[laminner] (\x,0.3) circle (2pt);
 \foreach \x in {7.5,8.3} \fill[lamright] (\x,0.3) circle (2pt);
 \fill (5.1,0.3) circle (1.7pt);
 \draw (5.1,0.3) circle (3.5pt);
 \draw[black!50,fill=white] (9.1,0.3) circle (2pt);
 \foreach \x in {3.5,4.3,7.5} \node at (\x,-0.43) {$($};
 \foreach \x in {5.9,6.7,8.3} \node at (\x,-0.43) {$)$};
 \node[font=\scriptsize] at (5.1,-0.46) {marked};
 \node[font=\scriptsize] at (9.1,-0.46) {unused};

 \node[lamouter,anchor=west,font=\scriptsize] at (8.15,-1.25)
   {$\beta_1:=\varepsilon_1-\varepsilon_5$};
 \node[laminner,anchor=west,font=\scriptsize] at (8.15,-1.65)
   {$\beta_2:=\varepsilon_2-\varepsilon_4$};
 \node[lamright,anchor=west,font=\scriptsize] at (8.15,-2.05)
   {$\beta_3:=\varepsilon_6-\varepsilon_7$};
 \draw[->,thick] (6.25,-0.8)--(6.25,-1.6);
 \node[anchor=east,align=right,font=\scriptsize] at (6.0,-1.16)
   {choose one endpoint\\on each arc};

 \coordinate (c000) at (1.1,-5.0);
 \coordinate (c100) at (4.7,-5.0);
 \coordinate (c010) at (1.1,-2.7);
 \coordinate (c110) at (4.7,-2.7);
 \coordinate (c001) at (2.65,-4.15);
 \coordinate (c101) at (6.25,-4.15);
 \coordinate (c011) at (2.65,-1.85);
 \coordinate (c111) at (6.25,-1.85);
 \foreach \a/\b in {000/100,010/110,001/101,011/111}
   \draw[lamouter,thick] (c\a)--(c\b);
 \foreach \a/\b in {000/010,100/110,001/011,101/111}
   \draw[laminner,thick] (c\a)--(c\b);
 \foreach \a/\b in {000/001,100/101,010/011,110/111}
   \draw[lamright,thick] (c\a)--(c\b);
 \foreach \bits/\lab in {000/1236,100/2356,010/1346,110/3456,
                         001/1237,101/2357,011/1347,111/3457}
   \node[fill=white,draw=black!35,rounded corners=1pt] at (c\bits)
     {$\lab$};
 \node[anchor=east] at (0.55,-5.0) {$\mu(A)$};
 \node[anchor=west] at (6.85,-1.85) {$\mu(B)$};
 \node[align=left,anchor=west,font=\scriptsize] at (7.25,-3.35)
   {Each label represents a weight.\\
    Opposite vertices choose\\opposite endpoints on every arc.};
 \node at (3.7,-5.95)
   {$\displaystyle
     \mu(A_S)=\mu(A)-\sum_{(p,q)\in S}\beta_{pq}$};
\end{tikzpicture}
\caption{From the tableau $T$ with columns $A=1236$ and $B=3457$
to Lam's partial noncrossing matching and the quiver cube,
for $V^\lambda=\bigwedge^4\mathbb C^8$.  The marked entry $3$ belongs to every cube vertex.  
An arc and the parallel cube edges in its color correspond to the same
root.  A vertex labeled $I$ represents the weight $\mu(I)$.  
The four opposite vertex pairs give the four terms
of $v^Q_\pi$.}
\label{fig:lam-matching}
\endgroup
\end{figure}

\begin{example}\label{ex:lam-matching}
The matching that determines a quiver vector need not be the only matching in the associated Pl\"ucker product.  Take $k=4$, $n=8$, $A=1236$, and $B=3457$, and let $T$ be the tableau with columns $A,B$, so that $\pi=\pi_T$ has endpoint weights $\mu(A),\mu(B)$.  
The marked set is $U=\{3\}$, and parenthesis matching gives
\[
 M=\{(1,5),(2,4),(6,7)\},\qquad
 R_Q(\mu(A)-\mu(B))
 =\{\eps_1-\eps_5,\eps_2-\eps_4,\eps_6-\eps_7\}.  
\]
Figure~\ref{fig:lam-matching} displays the tableau, matching, and cube.  
The four opposite vertex pairs give
\begin{align*}
 v^Q_\pi={}&u_{1236}u_{3457}+u_{1237}u_{3456}\\
 &{}+u_{1346}u_{2357}+u_{1347}u_{2356}.  
\end{align*}
On the dual side, the associated invariant is
\[
 \Delta_{M,\{3\}}
 =\Delta_{1236}\Delta_{3457}
 -\Delta_{1235}\Delta_{3467}
 +\Delta_{1234}\Delta_{3567}.  
\]
Indeed, set
$N:=\{(1,7),(2,4),(5,6)\}$ and
$L:=\{(1,7),(2,6),(4,5)\}$.  
The matchings compatible with $(1236,3457)$ are $M$ and $N$;
those compatible with $(1235,3467)$ are $N$ and $L$;
and only $L$ is compatible with $(1234,3567)$.  
The displayed identity follows by subtracting their instances of
\eqref{eq:lam-product}.  
Although $N$ is compatible with $(A,B)$, its arc $(5,6)$ has its
smaller endpoint in $B$.  Thus $M$ is the unique matching whose
left endpoints are the entries of $A\setminus B$; the other compatible
matching occurs in the Pl\"ucker product expansion, not in the generic decomposition
of $\mu(A)-\mu(B)$.  
\end{example}

\subsection{Changing the orientation}

Changing the orientation can require more than rescaling the basis vectors.  
For the two orientations of $A_3$ considered below, the cube with endpoints
$12$ and $34$ has different intermediate vertices and gives different
vectors.  The resulting change of basis is needed to lift the new
Coxeter element: the image of one canonical basis vector has two nonzero canonical coordinates, so this element does not permute the canonical basis up to scalars.  

\begin{example}\label{ex:A3}
In $\Sym^2(\bigwedge^2\C^4)$, set
\[
 M_1:=u_{12}u_{34},\qquad M_2:=u_{13}u_{24},\qquad
 M_3:=u_{14}u_{23}.  
\]
The subspace of $V^{2\omega_2}$ with $\operatorname{GL}_4$ weight $(1,1,1,1)$, or $\mathfrak{sl}_4$ weight zero, consists of
$xM_1+yM_2+zM_3$ with $x-y+z=0$.  
Its two canonical basis lines have representatives
\[
 G_1:=M_1+M_2,\qquad G_2:=M_2+M_3.
\]
By Proposition~\ref{prop:type-a}, these are also the endpoint-normalized
quiver vectors for $Q:1\to2\to3$.  
For $Q':1\leftarrow2\to3$, the generic decompositions for the comparable
pairs $(12,34)$ and $(13,24)$ give the endpoint-normalized vectors
\begin{equation}\label{eq:A3-cubes}
 v_1:=M_1-M_3,\qquad v_2:=M_2+M_3.
\end{equation}
Indeed, the first decomposition consists of $\eps_1-\eps_3$ and
$\eps_2-\eps_4$, the second of $\eps_1-\eps_2$ and
$\eps_3-\eps_4$, and
\[
 E_{31}E_{42}(u_{12}^2)=2(M_1-M_3),\qquad
 E_{21}E_{43}(u_{13}^2)=2(M_2+M_3).  
\]
Figure~\ref{fig:A3-cubes} displays the cube for $(12,34)$ in both
orientations.  In the figure we abbreviate
$v^Q_{A,B}:=v^Q_{\mu(A),\mu(B)}$.  

\begin{figure}[tbp]
\centering
\begin{tikzpicture}[
  x=1cm,y=1cm,
  every node/.style={font=\small},
  vertex/.style={fill=white,inner sep=3pt},
  root edge/.style={->,line width=.55pt},
  opposite/.style={densely dashed,black!45,line width=.5pt}]
  \begin{scope}
    \node at (0,.1) {$Q:\quad 1\longrightarrow2\longrightarrow3$};
    \coordinate (t) at (0,-.7);
    \coordinate (l) at (-1.45,-2.05);
    \coordinate (r) at (1.45,-2.05);
    \coordinate (b) at (0,-3.4);
    \draw[opposite] (t)--(b) (l)--(r);
    \node[vertex] (T) at (t) {$12$};
    \node[vertex] (L) at (l) {$13$};
    \node[vertex] (R) at (r) {$24$};
    \node[vertex] (B) at (b) {$34$};
    \draw[root edge] (T)--node[midway,above,sloped] {$-\beta_{23}$} (L);
    \draw[root edge] (T)--node[midway,above,sloped] {$-\beta_{14}$} (R);
    \draw[root edge] (L)--(B);
    \draw[root edge] (R)--(B);
    \node at (0,-4.02) {$v^Q_{12,34}=u_{12}u_{34}+u_{13}u_{24}$};
  \end{scope}
  \begin{scope}[xshift=7.4cm]
    \node at (0,.1) {$Q':\quad 1\longleftarrow2\longrightarrow3$};
    \coordinate (t) at (0,-.7);
    \coordinate (l) at (-1.45,-2.05);
    \coordinate (r) at (1.45,-2.05);
    \coordinate (b) at (0,-3.4);
    \draw[opposite] (t)--(b) (l)--(r);
    \node[vertex] (T) at (t) {$12$};
    \node[vertex] (L) at (l) {$23$};
    \node[vertex] (R) at (r) {$14$};
    \node[vertex] (B) at (b) {$34$};
    \draw[root edge] (T)--node[midway,above,sloped] {$-\beta_{13}$} (L);
    \draw[root edge] (T)--node[midway,above,sloped] {$-\beta_{24}$} (R);
    \draw[root edge] (L)--(B);
    \draw[root edge] (R)--(B);
    \node at (0,-4.02) {$v^{Q'}_{12,34}=u_{12}u_{34}-u_{14}u_{23}$};
  \end{scope}
\end{tikzpicture}
\caption{Two orientations, two cubes with the same endpoints in
$V^{\omega_2}$ for type~$A_3$.  
The vertex $ij$ denotes the weight of the increasing wedge $u_{ij}$, and
$\beta_{pq}=\eps_p-\eps_q$.  
Solid arrows subtract roots; parallel edges have the same label.  
Dashed lines join opposite vertices and mark the monomials in the
displayed vectors.}
\label{fig:A3-cubes}
\end{figure}

The Coxeter element for $Q'$ is $c=s_3s_1s_2$.  
Its Tits representative acts on the ambient monomials by
\[
 \dot c M_1=-M_2,\qquad \dot c M_2=-M_1,\qquad
 \dot c M_3=M_3.
\]
It follows that
\[
 \dot c G_1=-G_1,\qquad \dot c G_2=-G_1+G_2,
 \qquad
 \dot c v_1=-v_2,\qquad \dot c v_2=-v_1.
\]
Both canonical coordinates of $\dot c G_2$ are nonzero and remain
nonzero under any rescaling of $G_1,G_2$.  
Thus $\dot c$ does not permute the canonical basis up to scalars,
whereas it interchanges the two quiver vectors up to sign.  
Here $|P|=4$ and $\zeta=i$, so the global Coxeter normalization is
$\zeta^{|P|}=1$.  Taking $v_1,-v_2$ as representatives gives the
unsigned transposition corresponding to Coxeter-motion.  
The source reflection at vertex $1$ changes $Q$ into $Q'$, and its
action on $G_1,G_2$ is visible directly:
\[
 n_1G_1=v_1,\qquad n_1G_2=-v_2.
\]
\end{example}

In fact, we can classify all permutations whose representatives preserve the canonical basis up to scalars.

\begin{proposition}\label{prop:type-a-canonical-symmetries}
Let $2\leq k\leq n-2$, and let $\widetilde w\in\operatorname{SL}_n(\C)$ represent $w\in\mathfrak S_n$ in the normalizer of the diagonal torus.  Then $\widetilde w$ permutes the specialized canonical basis of $V^{2\omega_k}$ up to nonzero scalar multiples if and only if $w$ belongs to the dihedral group generated by $c_0:=(1\,2\,\cdots\,n)$ and $w_0:i\mapsto n+1-i$.  The same criterion holds for the dual canonical basis of $(V^{2\omega_k})^*$.  In particular, the only Coxeter elements with this property are $c_0$ and $c_0^{-1}$.
\end{proposition}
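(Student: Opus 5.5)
Both directions reduce to how a monomial representative acts on the explicit canonical vectors $G(T)$ of Proposition~\ref{prop:type-a}. By~\eqref{eq:lam-canonical-vector}, each $G(T)$ is, up to the factor two, the equal-coefficient cube sum over the opposite pairs $\{A_S,A_{M\setminus S}\}$ determined by Lam's noncrossing matching $(M,U)$ on the line $1<\cdots<n$. A representative $\widetilde w$ sends $u_I u_J$ to $\pm u_{wI}u_{wJ}$ times a torus scalar. Its image of $G(T)$ is therefore supported on the pairs $\{wA_S,wA_{M\setminus S}\}$: these are the opposite pairs of the ``matching'' $w(M)$ with marked set $w(U)$, read on the relabeled line.

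\textbf{Sufficiency.} Rhoades's theorem, in the form recalled in Section~\ref{subsec:csp-history} and its footnotes, already gives that $c_0$ permutes the canonical basis up to signs. Then $c_0^{-1}$ does as well. For $w_0$, I will use the fact quoted in Section~\ref{subsec:overview}: $\dot w_0$ permutes the canonical basis up to sign. Any other representative of $w$ differs from the Tits one by a torus element, which rescales weight vectors, so the property depends only on $w$. It is closed under products, so it holds on the dihedral group. For the dual canonical basis, I will take the contragredient action, since permuting a basis up to scalars is equivalent to permuting its dual basis up to scalars.

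\textbf{Necessity.} The dihedral group is exactly the group of permutations of $[n]$ that preserve the cyclic order up to reversal. Equivalently, these are the $w$ that send every pair of noncrossing chords on the circle to a noncrossing pair. So suppose $w$ is not dihedral. Then there are four points $a<b<c<d$ whose images are in the wrong cyclic position: the chords $(a,b),(c,d)$, or a nested pair $(a,d),(b,c)$, map to crossing chords. Because $2\le k\le n-2$, I can build a two-arc matching from such a pair, padding with marked vertices or unused vertices to reach $k$. Its vector $G(T)$ has two monomials. Its image under $\widetilde w$ is then a two-term vector supported on the opposite pairs of a \emph{crossing} configuration. By Lam's expansion~\eqref{eq:lam-product}, or directly by the relation $x-y+z=0$ seen in Example~\ref{ex:A3}, this vector expands with at least two nonzero canonical coordinates: the crossing pair resolves into the two noncrossing resolutions. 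It is therefore not proportional to any single $G(T')$. Example~\ref{ex:A3} is exactly this computation for $n=4$, $k=2$, and in general one restricts to the weight space spanned by the four relevant indices.

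\textbf{Final claim.} The Coxeter elements of $\mathfrak S_n$ are the $n$-cycles arising from orientations. The only $n$-cycles in the dihedral group that are Coxeter elements are $c_0^{\pm1}$: the other rotations $c_0^j$ are $n$-cycles only when $\gcd(j,n)=1$, and for $j\ne\pm1$ they are not Coxeter elements, because a Coxeter element $s_{i_r}\cdots s_{i_1}$ sends some $i$ to $i\pm1$ (reflections are involutions, not $n$-cycles). I would check this last step via the standard description of Coxeter elements of $\mathfrak S_n$ as the cycles $(1\,a_1\cdots a_p\,n\,b_q\cdots b_1)$ with the $a$'s increasing and the $b$'s decreasing.

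\textbf{Main obstacle.} I expect the hardest step to be the reduction in the necessity argument: the local crossing must genuinely produce a vector that is not proportional to any canonical basis vector. This requires pinning down, in the weight space spanned by the four relevant indices (with the padding held fixed), the two-dimensional space of canonical vectors and checking that the image has two nonzero coordinates. I would first try to reduce this cleanly to the $\mathfrak{sl}_4$ computation of Example~\ref{ex:A3}.
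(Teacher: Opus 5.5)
Your proof is correct. Its necessity half is essentially the paper's argument, run contrapositively. Both arguments localize to the weight $2\mathbf 1_C+\mathbf 1_{\{a,b,c,d\}}$, where the only monomials are $m_1,m_2,m_3$ and the two canonical vectors are $m_1+m_2$ and $m_2+m_3$.

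\begin{itemize}
\item The paper observes that the unique common monomial $m_2$ (the crossing pairing) must be sent to the unique common monomial of the image weight. Hence $w$ preserves crossings.
\item You instead take a two-arc canonical vector whose arcs become crossing, and show it is sent to a multiple of $m_1'-m_3'=(m_1'+m_2')-(m_2'+m_3')$, which has two nonzero canonical coordinates.
\end{itemize}

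The step you flag as the main obstacle is in fact immediate for every $n,k$. The ambient weight space of $\Sym^2(\bigwedge^k\C^n)$ is spanned by exactly those three monomials. The Cartan-square weight space is two-dimensional by \eqref{eq:dimension}. So \eqref{eq:lam-canonical-vector} gives the relation $x-y+z=0$ directly, and no reduction to $\mathfrak{sl}_4$ is needed.

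The one assertion you should actually prove is the nontrivial direction of your ``equivalently'': a permutation sending noncrossing chord pairs to noncrossing pairs is dihedral. The paper's one-line argument suffices. A pair is cyclically adjacent exactly when no chord crosses it; for a nonadjacent pair, pick one point in each open arc between its endpoints. Your hypothesis forces $w$ to preserve the crossing pairing of every four-set, so $w$ preserves cyclic adjacency. It is therefore an automorphism of the $n$-cycle graph.

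You genuinely diverge from the paper in the other two steps.

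\textbf{Sufficiency.} You import Rhoades's theorem for $c_0$ and the $\dot w_0$ (Sch\"utzenberger) theorem for $w_0$. The paper instead verifies sufficiency from \eqref{eq:lam-canonical-vector} itself. Rotation and reversal carry noncrossing partial matchings to noncrossing partial matchings. The permutation-matrix signs are constant on each weight space: $1$ for $w_0$, and $(-1)^{(k-1)(\mathbf 1_{n\in A}+\mathbf 1_{n\in B})}$ for $c_0$. Your route is shorter, but it rests on deep external results, including the transfer of Rhoades's dual-canonical statement to the canonical basis. The paper's route is self-contained once Lam's formula is available.

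\textbf{Coxeter claim.} Your criterion works once stated precisely. Every Coxeter element sends $1\mapsto2$ or $2\mapsto1$, by the cycle description you cite. By contrast, $c_0^j$ shifts every point by $j$ modulo $n$, which forces $j\equiv\pm1$. The detour through $\gcd(j,n)$ is unnecessary. The paper instead compares inversion counts, requiring $j(n-j)=n-1$.
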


\begin{proof}
Use the natural $\operatorname{GL}_n$-action on $V^{2\omega_k}$.  The representative $\widetilde w$ is a diagonal matrix times the ordinary permutation matrix of $w$.  Diagonal matrices act by nonzero scalars on weight spaces, so it suffices to use ordinary permutation matrices.  Their actions on the products $u_Au_B$ permute these monomials up to signs.

Fix $a<b<c<d$ and a $(k-2)$-subset $C\subseteq[n]\setminus\{a,b,c,d\}$, and set
\[
 \begin{aligned}
 m_1&:=u_{C\cup\{a,b\}}u_{C\cup\{c,d\}},\\
 m_2&:=u_{C\cup\{a,c\}}u_{C\cup\{b,d\}},\\
 m_3&:=u_{C\cup\{a,d\}}u_{C\cup\{b,c\}}.
 \end{aligned}
\]
These monomials have the same weight.  The canonical basis has exactly two vectors of this weight, corresponding to the two noncrossing matchings on $a,b,c,d$ with marked set $C$.  By~\eqref{eq:lam-canonical-vector}, they are proportional to
\[
 m_1+m_2,\qquad m_2+m_3.
\]
The unique monomial occurring in both is $m_2$.  Its two factors record the crossing pair of chords $\{a,c\}$ and $\{b,d\}$ when $1,\ldots,n$ are placed around a circle.

Suppose that $\widetilde w$ permutes the canonical basis up to scalars.  It carries these two vectors to scalar multiples of the two canonical vectors of the image weight.  Since the action permutes the monomials up to nonzero scalars, it carries their unique common monomial to the unique common monomial there.  Consequently, $w$ preserves the crossing pairing of every four-element subset.  Such a $C$ exists for every choice of $a,b,c,d$, since $k\leq n-2$.  The same argument applies to $w^{-1}$.

The cyclically adjacent pairs are exactly those having no crossing partner: for a nonadjacent pair, choosing one vertex from each open arc between its endpoints gives a crossing pair.  Thus $w$ preserves cyclic adjacency, so it belongs to the dihedral group.

Conversely, rotation and reversal preserve noncrossing matchings.  On a monomial $u_Au_B$, the permutation matrix of $w_0$ has sign $1$, since reversing either wedge has sign $(-1)^{\binom{k}{2}}$.  The permutation matrix of $c_0$ has sign
\[
 (-1)^{(k-1)(\mathbf1_{n\in A}+\mathbf1_{n\in B})},
\]
which is constant on each weight space.  Hence~\eqref{eq:lam-canonical-vector} shows that both generators, and therefore the whole dihedral group, permute the canonical basis up to scalars.  The action on the dual basis has the inverse transpose matrix, which permutes that basis up to scalars if and only if the original matrix does.

Finally, a Coxeter element is an $n$-cycle with $n-1$ inversions.  It cannot be a dihedral reflection, whose order is at most two.  The rotation $c_0^j$, for $1\leq j\leq n-1$, has one-line notation $(j+1,\ldots,n,1,\ldots,j)$ and hence $j(n-j)$ inversions.  Equality with $n-1$ forces $(j-1)(n-j-1)=0$, so $j=1$ or $n-1$.
\end{proof}

For $k=1$ or $k=n-1$, every weight space of $V^{2\omega_k}$ is one-dimensional.  Every element of the torus normalizer therefore permutes the canonical basis up to scalars.

In type~$D_4$, the canonical obstruction occurs already for a standard
ordering of the simple reflections.  The vector representation gives
a three-dimensional example in weight zero.  

\begin{example}\label{ex:D4}
Take $V^\lambda=V^{\omega_1}$, the vector representation of type~$D_4$, and write $\eps_1,\ldots,\eps_4$ for the coordinate vectors of $\R^4$.  Choose weight vectors $x_i,y_i$ of weights $\eps_i,-\eps_i$, for $1\leq i\leq4$, normalized so that the invariant quadratic tensor is $\sum_{i=1}^4x_i y_i$.  Their signs are adapted to this invariant form and need not agree with those of the standard weight vectors $u_\mu$.  
Set $z_i:=x_i y_i$.  The zero-weight space of the Cartan square is
\[
 V^{2\omega_1}_0
 =\left\{\sum_{i=1}^4 a_i z_i:\sum_{i=1}^4a_i=0\right\}.  
\]
Its three canonical basis lines have representatives
\[
 G_1:=z_1-z_2,\qquad G_2:=z_2-z_3,\qquad G_3:=z_3-z_4.
\]
To see this, use Bourbaki's labeling
$\alpha_i=\eps_i-\eps_{i+1}$ for $i=1,2,3$ and
$\alpha_4=\eps_3+\eps_4$.  
For $i=1,2,3$, the one-dimensional weight-$\alpha_i$ space is the
highest-weight space of an $i$-string, and lowering gives
$z_{i+1}-z_i$.  String compatibility of the lower global basis,
as in Lusztig~\cite{Lusztig}, therefore supplies these three independent
lines; the displayed dimension formula shows that they exhaust the
canonical basis lines of weight zero.  

For $c=s_1s_2s_3s_4$, the Tits representative sends
$z_1\mapsto z_2\mapsto z_3\mapsto z_1$ and fixes $z_4$.  
Consequently,
\[
 \dot c G_1=G_2,\qquad
 \dot c G_2=-G_1-G_2,\qquad
 \dot c G_3=G_1+G_2+G_3.
\]
The second and third images each have more than one nonzero canonical
coordinate, so $\dot c$ does not permute the canonical basis up to scalars.  
The corresponding orientation is $Q:3\to2\leftarrow4$, $2\to1$.  For the three comparable pairs $(\eps_i,-\eps_i)$, $1\leq i\leq3$, the generic decompositions are
\[
 R_Q(2\eps_i)=\{\eps_i-\eps_4,\eps_i+\eps_4\}.  
\]
The two roots in each pair have Euler matrix equal to the identity, which identifies the generic decomposition by Proposition~\ref{prop:euler-characterization}.  Their cube has opposite products $z_i$ and $z_4$, so its vector has the form $z_i+a z_4$ after rescaling.  The zero-weight equation for the Cartan square gives $a=-1$.  Thus the quiver basis vectors are proportional to
\[
 v_1:=z_1-z_4,\qquad v_2:=z_2-z_4,\qquad v_3:=z_3-z_4,
\]
and $\dot c$ sends $v_1\mapsto v_2\mapsto v_3\mapsto v_1$.  
Strayer~\cite{Strayer}, Section~3 and Theorem~9.5, constructs his basis by applying divided powers of the simple lowering operators along a linear extension of the minuscule heap.  Here such a linear extension has labels $1,2,3,4,2,1$, and his three vectors of weight zero are scalar multiples of $G_1,G_2,G_3$.  Thus, in this example, $\dot c$ does not permute Strayer's basis up to scalars.
\end{example}

\section{The canonical obstruction outside type \texorpdfstring{$A$}{A}}
\label{sec:canonical-obstruction}
\label{subsec:canonical-obstruction}

Changing the Coxeter element does not restore the compatibility that fails in Example~\ref{ex:D4}.  The following theorem applies to every minuscule Cartan square of type~$D$ or~$E$.

\begin{theorem}\label{thm:canonical-obstruction}
Let $\lambda$ be a nonzero dominant minuscule weight in type $D$ or $E$.  No representative of a Coxeter element in the torus normalizer permutes the specialized canonical basis of $V^{2\lambda}$ up to nonzero scalar multiples.  The same statement holds for the dual canonical basis of $(V^{2\lambda})^*$.
\end{theorem}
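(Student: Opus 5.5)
The plan is to reduce the statement to a local computation in a rank-four or type-$A_3$-like weight space, modeled on Example~\ref{ex:D4}, together with a combinatorial argument showing that every Coxeter element must act badly on some such local configuration. First, note that any two representatives of the same Weyl group element in the torus normalizer differ by an element of $T$. The torus acts on each weight space by a scalar. Hence the property ``permutes the canonical basis up to scalars'' depends only on $c$, and we may work with the Tits representative $\dot c$. By the inverse-transpose remark used in Proposition~\ref{prop:type-a-canonical-symmetries}, the dual canonical statement follows from the canonical one. So it suffices to show that for each minuscule $\lambda$ of type $D$ or $E$ and each Coxeter element $c$, some weight $\xi$ and some canonical vector $G$ of weight $\xi$ have $\dot c G$ with at least two nonzero canonical coordinates of weight $c\xi$.

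The key tool will be restriction to Levi subalgebras. Take a weight $\xi$ of $V^{2\lambda}$ and a subset $J$ of the Dynkin vertices forming a $D_4$ (or, where appropriate, an $A_3$) subdiagram. Under the Levi $\mathfrak l_J$, the $\xi+\Z\Phi_J$-part of $V^{2\lambda}$ is an $\mathfrak l_J$-module. The lower global basis is compatible with such restrictions: canonical basis elements of an extremal $\mathfrak l_J$-isotypic piece restrict to canonical basis elements of the corresponding Levi module. This is the same string-compatibility principle used in Example~\ref{ex:D4}, and the restriction argument of \cite{RushGlobal}. Choose $\xi$ so that this piece is a copy of the $D_4$ Cartan square $V^{2\omega_1}$, or of $V^{2\omega_2}$ for $A_3$, inside a Levi-extremal weight. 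Then the canonical vectors of weight $\xi$ are those computed in Example~\ref{ex:D4} or Example~\ref{ex:A3}.

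Next, write the Coxeter element. Conjugating $c$ by simple reflections acting at sources and sinks changes it within its class via $\dot c\mapsto n_i\dot c n_i^{-1}$. But $n_i$ need not preserve the canonical basis, so this freedom cannot be used directly. Instead, decompose $c=c_1c_Jc_2$, where $c_J$ is the product of reflections in $J$ in the order induced by $c$. Then choose $\xi$ so that the factors outside $J$ move the relevant weight spaces only by sign-permutations of extremal, one-dimensional-type pieces. Concretely, $\xi$ is taken fixed by, or extremal for, the reflections outside $J$. In $D_n$ ($n\ge4$) and $E_6,E_7$, every minuscule $\lambda$ admits a $D_4$ subdiagram $J$ containing a vertex on which a Weyl translate of $\lambda$ restricts to $\omega_1$ of $D_4$ (up to triality). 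The induced orientation on $J$ is one of the eight $D_4$ orientations. Example~\ref{ex:D4} handles one of them, and the rest follow by the same $3\times3$ computation with the permutation of $z_1,\dots,z_4$ induced by $c_J$. In every case $c_J$ acts on $\{z_i\}$ by a permutation that does not preserve the ``type-$A$ path'' structure $\{z_1-z_2,z_2-z_3,z_3-z_4\}$, up to the $w_0$ symmetry.

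The main obstacle is this last step: controlling the reflections outside $J$ so that they do not mix canonical vectors, and verifying that no $D_4$ orientation is compatible. For the first, I would try choosing $\xi$ so that each outside simple root pairs to zero with all weights $\mu,\nu$ in the relevant cubes. Then $n_i$ fixes those monomials up to a common sign. If that fails in small cases such as $D_n$ spinors or $E_6$, I would fall back on an explicit case check, placed in an appendix, of the finitely many classes of Coxeter elements of each type, up to diagram automorphism.
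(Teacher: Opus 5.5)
Your opening reductions (passing to the Tits representative, and from the canonical to the dual canonical basis via the inverse transpose) match the paper. The gap is the transfer from $\dot c_J$ to $\dot c=\dot c_1\dot c_J\dot c_2$.

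Your concrete plan is to choose $\xi$ so that every simple root outside $J$ is orthogonal to all weights in the relevant monomials. This is impossible. If $k\notin J$ is adjacent to $j\in J$, the eight weights of the $D_4$-minuscule piece $U\subset V^\lambda$ are linked by every simple root of $J$. So some $\mu$ and $\mu-\alpha_j$ both occur, and $(\mu-\alpha_j,\alpha_k)=(\mu,\alpha_k)+1$. The weight-zero canonical vectors of the $D_4$ Cartan square involve all four $z_i$, hence all eight weights of $U$. Therefore $n_k$ carries some $z_i$ to a monomial whose factors leave the $J$-class of $U$. Consequently $\dot c_1$ and $\dot c_2$ do not act on your layer by signed permutations, and the canonical basis at the target weight is not described by the Levi piece. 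The failure of $\dot c_J$ to permute the $D_4$ canonical lines then says nothing about $\dot c$.

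The vector representation of $D_n$, $n\geq5$, shows that no choice of $\xi$ rescues this. Its only weight space of dimension greater than one is the zero weight space. That space is fixed by every $s_k$ and lies at the top or bottom of no $k$-string. Moreover, $\dot c$ permutes $z_1,\ldots,z_n$ by an $(n-1)$-cycle that mixes the four $D_4$ coordinates with the others. The obstruction there is a global property of $\sigma$, not of $c_J$.

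The fallbacks do not close the gap either. An explicit case check cannot cover the infinite family $D_n$. An $A_3$ Levi gives no obstruction when the induced orientation is equioriented, since $c_0^{\pm1}$ does permute the canonical basis (Proposition~\ref{prop:type-a-canonical-symmetries}). The asserted compatibility of the canonical basis with $\mathfrak l_J$-isotypic pieces is also unproved, and false as stated. In the $D_n$ vector case, the canonical vector $z_{n-4}-z_{n-3}$ straddles the $V^{2\omega_1}_{D_4}$ and trivial components of its layer.

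The missing idea is a criterion that can be tested on a single canonical vector without knowing the canonical basis at the target weight. The paper obtains it from Lusztig's positivity (Lemma~\ref{lem:canonical-sign}). Canonical vectors have nonnegative coefficients in the monomials $u_\mu u_\nu$, and a Tits representative permutes these monomials up to sign. Hence, if $\dot c$ permutes the canonical basis up to scalars, the signs of $\dot c$ on the support of each canonical vector must agree; equivalently, $\Sym^2(D_\epsilon)$ preserves $V^{2\lambda}$.

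Even with this criterion, the signs contributed by reflections outside any subdiagram matter, so the paper does not localize:
\begin{enumerate}
\item For the vector representation, it compares the cycle type $(n-1)(1)$ of $\sigma$ with the order-two automorphism group of the path of canonical lines $\C(z_i-z_{i+1})$.
\item For the half-spin representations, it converts the sign condition into preservation of the four-index Pfaffian relations, which forces $\sigma$ to be dihedral (Lemma~\ref{lem:spin-sign-obstruction}).
\item For $E_6$ and $E_7$, it uses four two-term canonical vectors and checks all $32$ or $64$ orientations (Lemma~\ref{lem:exceptional-sign-certificate}). No single vector excludes every orientation.
\end{enumerate}
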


The proof uses the nonnegative coefficients of canonical vectors in the products $u_\mu u_\nu$.  A Tits representative permutes these products up to sign.  If it also permutes the canonical basis up to scalars, the nonzero coefficients of the image of each canonical vector must all have the same sign.

\begin{lemma}\label{lem:canonical-sign}
Assume that $\Phi$ is simply laced.  Use the standard weight basis $\{u_\mu\}$ of $V^\lambda$, in which all nonzero simple raising and lowering coefficients are $1$, and embed $V^{2\lambda}$ in $\Sym^2 V^\lambda$ by mapping its highest-weight canonical vector to $u_\lambda^2$.  Every specialized lower-canonical vector has nonnegative coefficients in the monomials $u_\mu u_\nu$.

Suppose a Tits representative acts by
\[
 \dot w u_\mu=\epsilon(\mu)u_{w\mu},
 \qquad \epsilon(\mu)\in\{1,-1\},
\]
and set $D_\epsilon u_\mu:=\epsilon(\mu)u_\mu$.  If $\dot w$ permutes the canonical basis of $V^{2\lambda}$ up to scalars, then $\Sym^2(D_\epsilon)$ preserves the Cartan square.  Moreover, for every canonical vector $b$, the nonzero coefficients of $\dot w b$ in the products $u_\mu u_\nu$ all have the same sign.
\end{lemma}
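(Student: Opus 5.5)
The proof has three parts: nonnegativity of canonical coefficients, invariance of the Cartan square under the sign twist, and the same-sign conclusion.

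\emph{Nonnegativity.} I would realize the Cartan square $V^{2\lambda}$ as the Cartan component of $V^\lambda\otimes V^\lambda$. The cleanest route uses the tensor-product construction of global bases (Lusztig's based modules): the lower global basis of $V^{2\lambda}$ is a subset of the canonical basis of the based module $V^\lambda\otimes V^\lambda$, and at $q=1$ it specializes to a vector with coefficients in $\Z_{\geq0}$ on the pure tensors $u_\mu\otimes u_\nu$. For minuscule $V^\lambda$, the canonical basis is the weight basis (Geck), and its structure constants for $E_i^{(a)},F_i^{(a)}$ are $0$ or $1$. A direct fallback, if positivity of tensor-product canonical bases in simply laced type is not quotable as stated, is as follows. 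Each canonical vector of $V^{2\lambda}$ is $b=\sum_\ell F_{i_1}^{(a_1)}\cdots u_\lambda^2$-type combinations with positive leading term. Positivity then follows from Lusztig's positivity of the structure constants of $\dot U$ acting on canonical bases in simply laced type, applied to the based module $V^\lambda\otimes V^\lambda$, which is positive because both factors have $0/1$ structure constants. Finally, symmetrizing the tensor embedding into $\Sym^2$ preserves nonnegativity. The normalization matches because $u_\lambda\otimes u_\lambda\mapsto u_\lambda^2$.

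\emph{Sign twist.} Write $\dot w=\Sym^2(D_\epsilon')\circ\Sym^2(\sigma_w)$, where $\sigma_w u_\mu:=u_{w\mu}$ is the unsigned permutation and $D_\epsilon'$ carries the signs at the target. Equivalently, $\Sym^2(\dot w)=\Sym^2(\sigma_w)\Sym^2(D_\epsilon)$. The key point is that $\dot w$ preserves $V^{2\lambda}$. It therefore suffices to show that $\Sym^2(\sigma_w)$ preserves $V^{2\lambda}$ when $\dot w$ permutes canonical lines; then $\Sym^2(D_\epsilon)=\Sym^2(\sigma_w)^{-1}\Sym^2(\dot w)$ preserves it too.

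For this I would use positivity. Take a canonical vector $b$ of weight $\xi$. Its image $\dot w b=c\,b'$ lies on a canonical line and has coefficients of a single sign pattern determined by $\epsilon$. On the other side, $\Sym^2(\sigma_w) b$ has nonnegative coefficients supported on the same monomials, since signs only flip entries.

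This is where the obstacle lies. The honest argument I would try first is different: the spaces $V^{2\lambda}_\xi$ are cut out inside $\Sym^2V^\lambda_\xi$ by Plücker-type quadratic relations. In Seshadri's standard-monomial theory, the orthogonal complement of $V^{2\lambda}_\xi$ in $\Sym^2$ is spanned by $\ker$ relations whose form is $\Sym^2(D)$-equivariant for characters $D$ of the torus. I would then show that $D_\epsilon$ is the restriction of a torus element times a $\pm$-weight-compatible sign, so that $\Sym^2(D_\epsilon)$ acts on each $\Sym^2$-weight space $\xi$ by a scalar up to the relation $\epsilon(\mu)\epsilon(\nu)$. Concretely, I would check that $\epsilon(\mu)\epsilon(\nu)$ depends only on $\mu+\nu$ among monomials in the support of the Cartan square. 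If that fails, I would fall back on the positivity argument: a vector in $V^{2\lambda}$ whose image under $\Sym^2(D_\epsilon)$ has canonical expansion matching.

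\emph{Same-sign conclusion.} Given the invariance, $\dot w b=c\,b'$ with $b'$ nonnegative, so every nonzero coefficient of $\dot w b$ has the sign of $c$. That sign must be real, since coefficients of $\dot w b$ are $\pm$ those of $b$, hence real. This proves the final assertion, and I expect the sign-twist step to be the main difficulty.
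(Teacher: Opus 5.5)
Your proposal has a genuine gap at the sign-twist step, which you leave open, and the logic runs in the wrong direction.

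The final assertion does not need the invariance. The hypothesis gives $\dot w b=c\,b'$ with $b'$ a nonzero canonical vector, hence nonnegative. The scalar $c$ is real, because $\dot w b$ has real coefficients. So every nonzero coefficient of $\dot w b$ has the sign of $c$. This is exactly your third paragraph, without the phrase ``given the invariance''.

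The missing observation is that this same-sign property \emph{implies} the invariance. Write $b=\sum a_{\{\mu,\nu\}}u_\mu u_\nu$ with $a_{\{\mu,\nu\}}\geq 0$. Then
\[
 \dot w b=\sum a_{\{\mu,\nu\}}\,\epsilon(\mu)\epsilon(\nu)\,u_{w\mu}u_{w\nu}.
\]
The map $\{\mu,\nu\}\mapsto\{w\mu,w\nu\}$ is injective on unordered pairs, so distinct monomials cannot cancel. Hence $\epsilon(\mu)\epsilon(\nu)=\operatorname{sgn}(c)$ for every pair in the support of $b$, that is, $\Sym^2(D_\epsilon)b=\operatorname{sgn}(c)\,b$. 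The canonical vectors form a basis of $V^{2\lambda}$, so $\Sym^2(D_\epsilon)$ preserves it.

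In your own reduction, the same computation reads $\Sym^2(\sigma_w)b=\operatorname{sgn}(c)\,\dot w b\in V^{2\lambda}$. You were one line away when you noted that $\Sym^2(\sigma_w)b$ and $\dot w b$ are supported on the same monomials.

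Your fallback cannot work. It tries to prove, via Pl\"ucker relations or a torus element and without the hypothesis, that $\epsilon(\mu)\epsilon(\nu)$ depends only on $\mu+\nu$. That is not a general property of Tits signs. For every Coxeter element of $E_6$ and $E_7$, Lemma~\ref{lem:exceptional-sign-certificate} exhibits a canonical vector whose two monomials have the same weight but receive opposite sign products. Constancy of $\epsilon(\mu)\epsilon(\nu)$ on supports must therefore be extracted from the assumption that $\dot w$ permutes canonical lines, as above.

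On nonnegativity, your main route is essentially the paper's. The clean justification is Lusztig's coproduct positivity, Theorem~14.4.13(b). Apply it to the coproduct of the canonical element of the negative part that produces $b$ from the highest-weight vector. The canonical elements in each tensor factor send $u_\lambda$ to a standard weight vector or to zero. Your fallback via the $0/1$ structure constants of $E_i^{(a)},F_i^{(a)}$ does not suffice. Canonical vectors need not be nonnegative combinations of divided-power monomials applied to the highest-weight vector.
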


\begin{proof}
Let
\[
 \Gamma:V^{2\lambda}\longrightarrow V^\lambda\otimes V^\lambda
\]
be the $G$-module embedding taking the highest-weight canonical vector to $u_\lambda\otimes u_\lambda$.  The coproduct positivity theorem of Lusztig~\cite{Lusztig}, Theorem~14.4.13(b), together with the canonical quotient maps, gives
\[
 \Gamma(b)=\sum_{\mu,\nu\in W\lambda}a_{\mu\nu}u_\mu\otimes u_\nu,
 \qquad a_{\mu\nu}\in\mathbb Z_{\geq0},
\]
for every specialized canonical vector $b$.  Composing $\Gamma$ with multiplication into $\Sym^2 V^\lambda$ gives the embedding in the statement.  Multiplication adds the coefficients of tensors with exchanged factors, so the coefficients remain nonnegative.

If $\dot w b$ is a scalar multiple of a canonical vector, its nonzero coefficients all have the same sign.  The permutation of unordered pairs $\{\mu,\nu\}\mapsto\{w\mu,w\nu\}$ cannot cancel distinct monomials, so all nonzero terms of $b$ acquire the same sign.  Thus $\Sym^2(D_\epsilon)b=\pm b$.  This holds for every canonical basis vector, proving the assertion about the Cartan square.
\end{proof}

\begin{proof}[Proof of Theorem~\ref{thm:canonical-obstruction}]
It suffices to use the Tits representative.  Another representative differs by a torus element, which acts by a nonzero scalar on each weight space.  It therefore permutes the canonical basis up to scalars if and only if the Tits representative does.  The action matrix on the dual basis is the inverse transpose.  An invertible matrix has exactly one nonzero entry in each row and column if and only if its inverse transpose does, so the canonical and dual canonical assertions are equivalent.

\emph{The vector representation of $D_n$.}
For $n\geq4$, use Bourbaki's numbering for type~$D_n$ and the coordinates of Example~\ref{ex:D4}, now with $1\leq i\leq n$.  The canonical zero-weight lines of $V^{2\omega_1}$ are
\[
 \C(z_1-z_2),\ \C(z_2-z_3),\ \ldots,\ \C(z_{n-1}-z_n).
\]
Indeed, for $1\leq i<n$, the weight-$\alpha_i$ space is one-dimensional, and $2\alpha_i$ does not occur.  Its canonical vector therefore lies at the top of its $i$-string, and lowering gives the line $\C(z_i-z_{i+1})$.  Canonical string compatibility gives these $n-1$ independent lines, the full dimension of $V^{2\omega_1}_0$.

The underlying coordinate permutation $\sigma$ of a type-$D_n$ Coxeter element has cycle type $(n-1)(1)$.  This is immediate for the standard product and follows for every Coxeter element by conjugacy.  The representative sends $z_i=x_i y_i$ to $z_{\sigma(i)}$, since the scalars on the two paired coordinates multiply to one.  Its action on the $z_i$ therefore has order $n-1\geq3$.  Preserving the displayed canonical lines would require an automorphism of the path $1--2--\cdots--n$, whose automorphism group has order two.  This proves the vector case.

\emph{The half-spin representations of $D_n$.}
It suffices to treat $\lambda=\omega_n$: the diagram automorphism interchanging the fork vertices preserves the canonical basis and interchanges the two half-spin representations.  Suppose that the Tits representative of a Coxeter element permutes the canonical basis up to scalars.  By Lemma~\ref{lem:canonical-sign}, the symmetric square of its diagonal sign map preserves the half-spin Cartan square.  The dual map therefore preserves the quadratic equations of the pure-spinor orbit.

The four-index Pfaffian equations impose a restriction on the underlying coordinate permutation $\sigma$.  As shown in Lemma~\ref{lem:spin-sign-obstruction}, preserving these equations forces $\sigma$ to be a cyclic rotation of $[n]$ or a cyclic rotation followed by reversal.  The calculation uses the inversion signs of $\sigma$ and is given in Appendix~\ref{app:canonical-certificates}.  But $\sigma$ has cycle type $(n-1)(1)$: it is neither a nonidentity rotation, which has no fixed point, nor a reflection, which has order at most two.  Since $n\geq4$, this is a contradiction.

\emph{The exceptional minuscule representations.}
For $E_6$ with $\lambda=\omega_1$ and $E_7$ with $\lambda=\omega_7$, Lemma~\ref{lem:exceptional-sign-certificate} gives four canonical vectors whose expansions in the products $u_\mu u_\nu$ each have exactly two terms, with equal positive coefficients.  For every orientation of the Dynkin diagram, its Tits representative gives opposite signs to the two terms of at least one of these vectors.  By Lemma~\ref{lem:canonical-sign}, no Coxeter element can therefore permute the canonical basis up to scalars.  The vectors, their top-of-string checks, and the complete sign recurrence appear in Appendix~\ref{app:canonical-certificates}; the sign calculations cover all orientations in each type.  Finally, the Dynkin involution interchanging $\omega_1$ and $\omega_6$ gives the second $E_6$ case.
\end{proof}

\section{Higher Cartan powers}\label{sec:higher-powers}

For higher Cartan powers $V^{m\lambda}$ with $m>2$, the remaining problem is to construct bases compatible with Coxeter-motion for every minuscule weight $\lambda$ and every Coxeter element.  The unpublished manuscript of GPT~5.6 Sol~\cite{Sol} proves the corresponding cyclic sieving identities by a case-by-case treatment of the minuscule families, with exact computer-assisted certificates in the exceptional types.  A uniform explanation through compatible bases remains open for general $m$.  

Given $\pi\in\RPP_m(P_\lambda)$, set $I_s:=\{p\in P_\lambda:\pi(p)<s\}$ and $\mu_s:=\wt(I_s)$ for $1\leq s\leq m$.  Then $\mu_1\geq\cdots\geq\mu_m$ is a multichain of weights of $V^\lambda$.  Can quiver data attached to these multichains produce bases of $V^{m\lambda}$ whose vectors transform with source and sink reflections and whose Coxeter action can be normalized to lift Coxeter-motion?

\appendix

\section{Classical cyclic sieving and RPP conventions}\label{app:classical-conventions}

We record the sign and grading calculations that identify the classical results in Section~\ref{subsec:csp-history} with the conventions of this paper.

\subsection{The ordinary long-cycle matrix and signed traces}

Return to the type~$A$ setting of Section~\ref{subsec:csp-history}, with $1\leq k\leq n-1$ and $m\geq1$, and use its notation $S_m$, $j$, and $\zeta_n$.  Let $g\in\operatorname{GL}_n(\C)$ be the ordinary permutation matrix of $c=(1\,2\,\cdots\,n)$.  On the dual canonical basis $\{F_T:T\in S_m\}$ of $S^{(m^k)}\C^n$, Rhoades~\cite{Rhoades}, Proposition~5.5, gives
\[
 gF_T=(-1)^{(k-1)T_n}F_{jT},
\]
where $T_i$ counts the entries of $T$ equal to $i$.  The author~\cite{RushGlobal}, Theorem~5.5, reproves this formula for the upper global basis.  By the restriction results in Sections~4.2--4.3 of that paper, the same argument applies to the lower global basis, and hence to the canonical basis at $q=1$.  For $m=1$, it is the ordinary wedge-reordering sign.

Suppose $0\leq d<n$ and $j^dT=T$.  Promotion cyclically rotates the content, so the content is constant on the cycles of addition by $d$ modulo $n$.  Writing $t:=\gcd(n,d)$, these are the residue classes modulo $t$.  Each occurs $d/t$ times among the last $d$ entries of $[n]$, and $T$ has $mk$ entries in total.  Thus
\[
 T_{n-d+1}+\cdots+T_n=\frac{dmk}{n}.
\]
The sign accumulated in $d$ applications of $g$ is consequently
\[
 (-1)^{(k-1)dmk/n}=\zeta_n^{dm\binom{k}{2}},
\]
independent of the fixed tableau.  Only fixed tableaux contribute to the trace, and $g$ is conjugate to $\operatorname{diag}(1,\zeta_n,\ldots,\zeta_n^{n-1})$.  It follows that
\begin{align*}
 \#\Fix(j^d)
 &=\zeta_n^{-dm\binom{k}{2}}\tr\!\left(g^d\mid S^{(m^k)}\C^n\right)\\
 &=\zeta_n^{-dm\binom{k}{2}}
 s_{(m^k)}(1,\zeta_n^d,\ldots,\zeta_n^{(n-1)d}).
\end{align*}
Periodicity gives the identity for all integers $d$.

\subsection{Order ideals and height-one RPPs}

For any finite poset $P$, the map $I\mapsto\pi_I:=\mathbf1_{P\setminus I}$ is a bijection from $J(P)$ to $\RPP_1(P)$.  It intertwines each order-ideal toggle with the corresponding piecewise-linear toggle, and $|\pi_I|=|P|-|I|$.  This is the convention used by Hopkins~\cite{Hopkins}, Section~4.2.

Now suppose that $P$ is self-dual, as every minuscule poset is, and choose an order-reversing bijection $\iota:P\to P$.  Then $\pi\mapsto m-\pi\circ\iota$ is a bijection of $\RPP_m(P)$ that replaces size $|\pi|$ by $m|P|-|\pi|$.  Hence
\begin{equation}\label{eq:rpp-reciprocity}
 F_{P,m}(q)=q^{m|P|}F_{P,m}(q^{-1}).
\end{equation}
In particular,
\begin{equation}\label{eq:height-one-grading}
 F_{P,1}(q)=\sum_{I\in J(P)}q^{|P|-|I|}=J(P;q).
\end{equation}
For minuscule $P$, the bijection therefore transfers the Rush--Shi theorem, with its generating polynomial, to height-one RPPs.

\subsection{Rectangular tableaux and threshold ideals}

Let $P=[k]\times[n-k]$, viewed as a rectangle with row lengths weakly decreasing from top to bottom.  For a $k$-subset $A=\{a_1<\cdots<a_k\}$ of $[n]$, let $I_A$ be the ideal whose row lengths are
\[
 (a_k-k,\ldots,a_1-1).
\]
This is a bijection from $k$-subsets to rectangular ideals, with $|I_A|=\sum_r(a_r-r)$.  Inclusion of ideals corresponds to componentwise comparison of the increasing subsets.

Given $\pi\in\RPP_m(P)$, form its threshold ideals $I_s:=\{p:\pi(p)<s\}$ for $1\leq s\leq m$.  Write $I_s=I_{A_s}$ and place $A_s$ in the $s$th column of a tableau $T$.  Each column is strictly increasing, and $I_1\subseteq\cdots\subseteq I_m$ makes the rows weakly increasing.  This gives a bijection with tableaux of shape $(m^k)$ and entries in $[n]$.

Label the cell in row $r$ and column $t$ of $P$ by $k-r+t$.  In these coordinates the correspondence can also be written
\[
 \pi(r,t)=\#\{s:T_{k+1-r,s}\leq k-r+t\}.
\]
Thus the RPP entries are cumulative row counts of tableau entries.  Label toggles are the piecewise-linear toggles on these counts, and their product $\tau_1\cdots\tau_{n-1}$, with the rightmost factor acting first, is promotion in our convention.  This is the Gelfand--Tsetlin description of the tableau correspondence in Hopkins~\cite{HopkinsPromotion}, Appendix~A, Propositions~A.7 and~A.9.  Consequently, $\gamma_{(1\,2\,\cdots\,n)}$ corresponds to $j$.

Finally, each $p\in P$ belongs to exactly $m-\pi(p)$ threshold ideals.  Therefore
\[
 \sum_{r=1}^k\sum_{s=1}^m(T_{rs}-r)
 =\sum_{s=1}^m|I_s|
 =mk(n-k)-|\pi|.
\]
The statistic on the left is the exponent in the shifted Schur polynomial~\eqref{eq:rhoades-polynomial}.  Equation~\eqref{eq:rpp-reciprocity} therefore gives
\[
 q^{-m\binom{k}{2}}s_{(m^k)}(1,q,\ldots,q^{n-1})
 =F_{[k]\times[n-k],m}(q),
\]
as asserted in Section~\ref{subsec:csp-history}.

\section{Sign calculations for the canonical obstruction}
\label{app:canonical-certificates}

This appendix supplies the two sign calculations used in the proof of Theorem~\ref{thm:canonical-obstruction}.  The half-spin calculation reduces preservation of the Cartan square to a condition on the inversions of a permutation.  In the exceptional types, four explicit canonical vectors suffice to exclude every orientation.

\subsection{The half-spin sign obstruction}

\begin{lemma}\label{lem:spin-sign-obstruction}
Let $n\geq4$, and realize type~$D_n$ in $\R^n$ with standard orthonormal basis $\varepsilon_1,\ldots,\varepsilon_n$ and simple roots $\alpha_i=\varepsilon_i-\varepsilon_{i+1}$ for $i<n$ and $\alpha_n=\varepsilon_{n-1}+\varepsilon_n$.  Let $\lambda=\omega_n$, and let $c$ be a Coxeter element.  Write
\[
 \dot c\,u_\mu=\epsilon(\mu)u_{c\mu}
 \qquad(\mu\in W\lambda),
\]
and let $\sigma\in\mathfrak S_n$ be the image of $c$ under the homomorphism that forgets the signs of a signed coordinate permutation.  Set $D_\epsilon u_\mu:=\epsilon(\mu)u_\mu$.  If $\Sym^2(D_\epsilon)$ preserves the Cartan square, then $\sigma$ belongs to the permutation group generated by a cyclic rotation of $[n]$ and a reversal.
\end{lemma}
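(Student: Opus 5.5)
The plan is to translate preservation of the Cartan square into a sign condition on four-index quadratic relations, and then to read off from these conditions that $\sigma$ preserves the cyclic crossing structure of $[n]$, in the style of Proposition~\ref{prop:type-a-canonical-symmetries}.

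\emph{Step 1: setup.} Index the weights of $V^{\omega_n}$ by subsets $K\subseteq[n]$ of fixed parity. The weight $\mu_K$ has coordinate $-\tfrac12$ on $K$ and $+\tfrac12$ off $K$. The signed permutation $c$ acts by $\sigma$ on positions, followed by flipping the signs at an even set $F$, so $c\mu_K=\mu_{\sigma(K)\triangle F}$. I will write $\epsilon(K)$ for the Tits sign. Since $\Sym^2(D_\epsilon)$ is diagonal in the monomials $u_Ku_L$, it preserves $V^{2\lambda}$ if and only if its transpose preserves the annihilator of $V^{2\lambda}$ in $\Sym^2(V^\lambda)^*$, weight space by weight space. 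That annihilator is the space of quadratic equations of the pure-spinor orbit.

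\emph{Step 2: the four-term relations.} Fix $a<b<c<d$ and a set $K$ disjoint from them. In the weight of $u_{K}u_{K\cup abcd}$, the monomials are
\[
 u_Ku_{K\cup abcd},\quad u_{K\cup ab}u_{K\cup cd},\quad u_{K\cup ac}u_{K\cup bd},\quad u_{K\cup ad}u_{K\cup bc}.
\]
I expect this weight space of $\Sym^2V^\lambda$ to be four-dimensional and the Cartan-square piece to be three-dimensional, by~\eqref{eq:dimension}, since the four-element chain-comparable configurations give three comparable pairs. So there is a single Wick/Pfaffian relation
\[
 \rho_{K;abcd}=x_{K}x_{K\cup abcd}-s_1x_{K\cup ab}x_{K\cup cd}+s_2x_{K\cup ac}x_{K\cup bd}-s_3x_{K\cup ad}x_{K\cup bc},
\]
whose signs $s_j=\pm1$ depend only on the positions of $a,b,c,d$ relative to $K$ in the standard basis $u_\mu$. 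I will pin these signs down by computing $\rho$ in one convenient case, namely $K=\varnothing$ with $a,b,c,d$ consecutive, using the Tits action~\eqref{eq:standard-tits} and the explicit Clifford-algebra model. I will then transport the result by simple reflections, which are themselves Tits-permutations of monomials.

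Preservation of the annihilator means that $\Sym^2(D_\epsilon)^T$ multiplies $\rho_{K;abcd}$ by a scalar. Equivalently, the three products
\[
 \epsilon(K)\epsilon(K\cup abcd)\,\epsilon(K\cup ab)\epsilon(K\cup cd),\quad
 \epsilon(K)\epsilon(K\cup abcd)\,\epsilon(K\cup ac)\epsilon(K\cup bd),\quad
 \epsilon(K)\epsilon(K\cup abcd)\,\epsilon(K\cup ad)\epsilon(K\cup bc)
\]
must all equal $+1$.

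\emph{Step 3: computing the sign ratios.} Next I will compute $\epsilon$ for the Tits representative $\dot c$. I will realize $\dot c$ on spinors as the signed permutation lift: $u_K$ goes to $\pm u_{\sigma(K)\triangle F}$, with the sign equal to $(-1)$ raised to the number of inversions of $\sigma$ restricted to $K$, times a factor depending only on $|K|$ and $K\cap\sigma^{-1}(F)$ that is affine in the indicator of $K$.

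The affine part cancels in each product above, because each pair of opposite monomials has the same total multiset of indices. The inversion part then yields the following. The relative order of $\sigma(a),\sigma(b),\sigma(c),\sigma(d)$, together with the number of $\sigma(K)$-elements lying between them, determines the sign ratios. These ratios equal $+1$ exactly when $\sigma$ carries the crossing pair $\{ac,bd\}$ of $\{a,b,c,d\}$ to the crossing pair of its image in cyclic order, and the two noncrossing pairs to noncrossing pairs. Choosing $K=\varnothing$ removes the dependence on $K$, so only four-subsets matter.

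\emph{Step 4: conclusion, and the main obstacle.} Once $\sigma$ preserves crossing pairs on all four-subsets, the argument of Proposition~\ref{prop:type-a-canonical-symmetries} applies verbatim. Cyclically adjacent pairs are exactly those with no crossing partner, so $\sigma$ preserves cyclic adjacency and lies in the dihedral group.

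The main obstacle is Step~3: getting the signs of the Tits lift on spinors right, in particular the contribution of the sign flips $F$ and of the fork vertex $\alpha_n=\varepsilon_{n-1}+\varepsilon_n$. I would first verify the sign formula for each generator $n_i$ directly from~\eqref{eq:standard-tits}, and then argue that the form ``inversions of $\sigma$ on $K$ times an affine character of $K$'' is closed under composition. This closure would give the formula for $\dot c$ regardless of the orientation.
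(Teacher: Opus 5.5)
Your proposal is correct. Steps 1--3 are essentially the paper's argument:
\begin{itemize}
\item the unique relation among $p_\varnothing p_{abcd}$, $p_{ab}p_{cd}$, $p_{ac}p_{bd}$, $p_{ad}p_{bc}$;
\item the sign formula ``inversions of $\sigma$ on $K$ plus an affine-linear term'';
\item cancellation of the affine term, leaving the $\mathbb F_2$ conditions $e_{ab}+e_{cd}=e_{ac}+e_{bd}=e_{ad}+e_{bc}$.
\end{itemize}

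The routes diverge in the last step. The paper solves these equations algebraically. Applying them to the quadruples $\{1,i,j,k\}$ gives $e_{ij}=v+u_i+u_j$, so the inversion set of $\sigma$ is a cut between two classes, possibly complemented. Non-interleaving of the two classes then forces $\sigma$ to be a rotation, or a rotation composed with the reversal.

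You instead read each condition as saying that the pattern of $\sigma$ on $\{a,b,c,d\}$ lies in the order-$8$ stabilizer of the crossing pairing $\{ac,bd\}$. You then reuse the cyclic-adjacency argument of Proposition~\ref{prop:type-a-canonical-symmetries}. That reading is right: checking the $24$ patterns shows that exactly $1234$, $1432$, $2143$, $2341$, $3214$, $3412$, $4123$, $4321$ satisfy the conditions. However, your proposal asserts this without proof, so you would need to include that finite check or a short argument for it.

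What each route buys: yours shows that the half-spin obstruction and the type~$A$ obstruction are the same phenomenon, namely preservation of crossings, at the cost of the $S_4$ verification. The paper's route is self-contained linear algebra over $\mathbb F_2$.

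Three smaller points.
\begin{itemize}
\item You never need the actual signs $s_j$ of the relation. You only need all four coefficients to be nonzero, which is immediate from $p_{abcd}=x_{ab}x_{cd}-x_{ac}x_{bd}+x_{ad}x_{bc}$ on the big cell.
\item General $K$ is unnecessary, and its contributions cancel anyway. Taking $K=\varnothing$ already covers every four-subset.
\item Your description of the affine factor as depending only on $|K|$ and $K\cap\sigma^{-1}(F)$ is wrong. For example, $n_1$ acts with sign $(-1)^{a_1a_2+a_2}$ while $F=\varnothing$. This does not affect the argument, since you only use affine-linearity, which is correct and closed under composition.
\end{itemize}
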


\begin{proof}
Index the half-spin weights by even subsets $A\subseteq[n]$.  Writing $a_i:=\mathbf1_{i\in A}$, set
\[
 \mu_A:=\frac12\sum_{i=1}^n(1-2a_i)\varepsilon_i.
\]
The corresponding standard weight vector is $u_{\mu_A}$.  For $i<n$, the simple lowering operator $f_i$ changes $(a_i,a_{i+1})=(0,1)$ to $(1,0)$; the operator $f_n$ changes $(a_{n-1},a_n)=(0,0)$ to $(1,1)$.  All these nonzero coefficients are $1$.  Thus, for the convention $n_i=\exp(e_i)\exp(-f_i)\exp(e_i)$, the signs $\epsilon_i(A):=\eps_i(\mu_A)$ of the simple Tits actions are
\begin{equation}\label{eq:spin-simple-signs}
 \epsilon_i(A)=
 \begin{cases}
 (-1)^{(1-a_i)a_{i+1}},&i<n,\\
 (-1)^{(1-a_{n-1})(1-a_n)},&i=n.
 \end{cases}
\end{equation}
On the tuple $(a_1,\ldots,a_n)$, the simple reflection swaps entries $i,i+1$ for $i<n$; for $i=n$ it swaps entries $n-1,n$ and replaces both by their complements.  Equations~\eqref{eq:spin-simple-signs} give
\begin{equation}\label{eq:spin-coxeter-sign}
 \dot c\,u_{\mu_A}=\epsilon(A)u_{c\mu_A},\qquad
 \epsilon(A)=(-1)^{\sum_{r<s}e_{rs}a_ra_s+\ell(A)},\qquad
 e_{rs}:=\mathbf1_{\sigma(r)>\sigma(s)},
\end{equation}
where $\epsilon(A):=\epsilon(\mu_A)$ and $\ell$ is affine-linear over $\mathbb F_2$.

To verify~\eqref{eq:spin-coxeter-sign}, observe that the quadratic part of each exponent in~\eqref{eq:spin-simple-signs} is $a_i a_{i+1}$, with positions $n-1,n$ used for the fork generator.  This is the sign exponent for the same adjacent transposition acting on an exterior monomial.  These exterior signs compose to the inversion exponent of $\sigma$.  Replacing any $a_i$ by $1-a_i$ in a quadratic polynomial changes it only by affine-linear terms, so the additional complements do not change this quadratic part.

By hypothesis, $\Sym^2(D_\epsilon)$ preserves the Cartan square.  Its dual therefore preserves the quadratic equations of the pure-spinor orbit in $\mathbb P(V^{\omega_n})$: the Cartan square is the span of the squares of vectors in the highest-weight orbit, so its annihilator consists precisely of these equations.

Write $p_A$ for the coordinate dual to $u_{\mu_A}$.  For any four indices $i<j<k<l$, consider the four monomials
\[
 p_\varnothing p_{ijkl},\qquad
 p_{ij}p_{kl},\qquad p_{ik}p_{jl},\qquad p_{il}p_{jk}.
\]
Their relation space on the pure-spinor orbit is one-dimensional, spanned by
\begin{equation}\label{eq:spin-four-index}
 p_\varnothing p_{ijkl}-p_{ij}p_{kl}
 +p_{ik}p_{jl}-p_{il}p_{jk}=0.
\end{equation}
Indeed, in the exterior-algebra realization of the half-spin module, identify $u_{\mu_A}$ with the ordered wedge of the vectors $\mathbf e_a$, $a\in A$, where $\mathbf e_1,\ldots,\mathbf e_n$ is the standard basis of $\C^n$ and $u_{\mu_\varnothing}=1$.  This agrees with the lowering coefficients above.  On the standard affine chart $\exp(\sum_{r<s}x_{rs}\mathbf e_r\wedge\mathbf e_s)$, one has
\[
 p_\varnothing=1,\qquad p_{ij}=x_{ij},\qquad
 p_{ijkl}=x_{ij}x_{kl}-x_{ik}x_{jl}+x_{il}x_{jk}.
\]
Since the $x_{rs}$ are independent coordinates, there is exactly the displayed relation.

Since $\Sym^2(D_\epsilon)$ is diagonal, its dual preserves the span of these four monomials.  It must therefore take their unique relation to a scalar multiple of itself, so the four products of signs agree.  The affine-linear terms $\ell$ cancel when any two products are compared, since each of the four indices occurs once in each product.  Consequently, in $\mathbb F_2$,
\begin{equation}\label{eq:spin-four-point}
 e_{ij}+e_{kl}=e_{ik}+e_{jl}=e_{il}+e_{jk}
 \qquad(i<j<k<l).
\end{equation}
To solve these equations, extend $e_{ij}$ symmetrically to unordered pairs.  Applying~\eqref{eq:spin-four-point} to $\{1,i,j,k\}$ shows that all sums $e_{1i}+e_{1j}+e_{ij}$, for distinct $i,j>1$, have the same value $v$.  Set $u_1:=0$ and $u_i:=e_{1i}+v$ for $i>1$.  Then
\begin{equation}\label{eq:spin-cut}
 e_{ij}=v+u_i+u_j\qquad(i\ne j).
\end{equation}

If $v=0$, the two classes defined by $u_i$ cannot interleave.  Otherwise there are $i<j<k$ with $u_i=u_k\ne u_j$, and $e_{ij}=e_{jk}=1$ but $e_{ik}=0$, contradicting $\sigma(i)>\sigma(j)>\sigma(k)$.  Thus the classes are consecutive blocks; $\sigma$ is increasing on each block and reverses their order, so it is a cyclic rotation of $[n]$.  If $v=1$, composing $\sigma$ with the full reversal replaces each $e_{ij}$ by $1-e_{ij}$ and reduces to the preceding case.  Hence~\eqref{eq:spin-four-point} implies that $\sigma$ belongs to the permutation group generated by a cyclic rotation and a reversal.
\end{proof}

\subsection{The exceptional sign certificates}

\begin{lemma}\label{lem:exceptional-sign-certificate}
Take $\lambda=\omega_1$ in type $E_6$ or $\lambda=\omega_7$ in type $E_7$.  In each type there are four canonical vectors, listed below, whose expansions in the products $u_\mu u_\nu$ each have exactly two terms, with equal positive coefficients.  For every orientation $Q$, the image of at least one of these vectors under $\dot c_Q$ has one positive and one negative coefficient in these products.
\end{lemma}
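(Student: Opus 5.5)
We will look for canonical vectors at the top of simple-root strings, imitating the vector-representation case of $D_n$. For each simple root $\alpha_i$ we consider the weight $\xi=2\mu-\alpha_i$, where $\mu\in W\lambda$ satisfies $(\mu,\alpha_i)=1$. The plan is to choose $\xi$ so that exactly two comparable pairs have sum $\xi$: the pair $(\mu,\mu-\alpha_i)$ and one other pair $(\eta,\theta)$ with $\eta+\theta=\xi$. Then $V^{2\lambda}_\xi$ is two-dimensional by~\eqref{eq:dimension}. We further want $V^{2\lambda}_{\xi+\alpha_i}$ to be one-dimensional, spanned by $u_\mu^2$, and $V^{2\lambda}_{\xi-\alpha_i}$ to be one-dimensional. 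The second condition forces one canonical line to be killed by $e_i$. By string compatibility of the lower global basis (Lusztig), a canonical vector lies at the top of an $i$-string. Concretely, the vector $b=u_\eta u_\theta$-type combination annihilated by $e_i$ is computed from the standard action. Its expansion has exactly two terms, $u_{\eta}u_\theta$ and $u_{\eta'}u_{\theta'}$, obtained by raising one factor along $\alpha_i$. Lemma~\ref{lem:canonical-sign} forces equal positive coefficients after normalization. I would find the four required vectors by a short search over the weight diagram of the $27$- and $56$-dimensional representations. Each certificate vector is a top-of-string check of this form, where $e_i$ applied to each term cancels because both terms raise to the same monomial with opposite pairings.

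Next, for a fixed orientation $Q$ with source order $i_1,\dots,i_r$, we will compute $\dot c_Q$ on the two monomials of each certificate vector. By~\eqref{eq:standard-tits}, $\dot c_Q u_\mu=\epsilon(\mu)u_{c\mu}$, with $\epsilon(\mu)=\prod_k\eps_{i_k}(\mu_{k-1})$ accumulated along the sweep. The sign on a monomial $u_\eta u_\theta$ is $\epsilon(\eta)\epsilon(\theta)$. The certificate fails for $Q$ exactly when $\epsilon(\eta)\epsilon(\theta)\epsilon(\eta')\epsilon(\theta')=-1$. Since $\eps_i(\mu)=-1$ iff $(\mu,\alpha_i)=1$, this product is a parity count over the sweep. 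It counts the steps at which an odd number of the four current weights pair to $1$ with the current simple root. This gives an explicit recurrence: propagate the four weights through $s_{i_1},s_{i_2},\dots$ and record the parity at each step.

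We then reduce to a finite check over orientations. The sign product depends only on $Q$, not on the chosen source order, because commuting nonadjacent $n_i$ commute exactly. There are $2^{r-1}$ orientations: $32$ for $E_6$ and $64$ for $E_7$. One could tabulate all of them. A better plan is to express each certificate's parity as an $\mathbb F_2$-affine function of the edge orientations, in the spirit of~\eqref{eq:spin-coxeter-sign}. The key observation is that reversing one edge $j\to j'$ changes the sweep only by reordering $s_j,s_{j'}$. The resulting change in parity is a local term involving the weights at that point. If each certificate parity is affine in the edge variables, then showing that the four affine functions have no common zero is a small linear-algebra check.

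The main obstacle is choosing four certificates whose parities jointly cover every orientation. Affineness may fail, since reordering early reflections changes the later weights. If it does, I would fall back on the complete sign recurrence over all orientations, organized by a leaf-removal induction on the tree. The candidates I would try first are vectors attached to the branch vertex and to the three arms, so that each arm's orientation is detected by a different certificate.
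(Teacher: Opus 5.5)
Your sign bookkeeping matches the paper's verification step. You use $\dot c_Q u_\nu=\epsilon_c(\nu)u_{c\nu}$ accumulated along a source sweep, the certificate fails exactly when the product of the four signs is $-1$, and the answer does not depend on the source order. The gaps are in the two places where the lemma has real content.

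\emph{First gap: how you produce canonical vectors.} Your mechanism for getting canonical two-term vectors is not valid.

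For the lower global basis, string compatibility says that $\{G(b):\varepsilon_i(b)\ge n\}$ spans $\sum_{k\ge n}f_i^{(k)}V$. It is the upper (dual) basis that is compatible with $\ker e_i^n$. So a canonical vector with $\varepsilon_i(b)=0$ need not be killed by $e_i$. In Example~\ref{ex:D4}, $G_2=z_2-z_3$ is canonical and does not lie in $f_1V$, yet $e_1G_2=e_1z_2\neq0$.

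The cancellation you describe also contradicts Lemma~\ref{lem:canonical-sign}. In the monomials $u_\eta u_\theta$, $e_i$ has nonnegative matrix coefficients. A vector killed by $e_i$ through cancellation of two terms must therefore have coefficients of opposite signs, so it is not canonical. At your weight $2\mu-\alpha_i$ the cancellation cannot even occur. The only monomial of weight $2\mu$ is $u_\mu^2$, so every monomial other than $u_\mu u_{\mu-\alpha_i}$ is already killed by $e_i$. The evident canonical vector at that weight, $f_iu_\mu^2$, is a single monomial and is useless as a certificate. Note also that Lemma~\ref{lem:canonical-sign} gives only nonnegativity, not equality of the two coefficients.

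What works, and what the paper does, is to go downward from an extremal square $u_\mu^2$. Apply $f_i$ only when every factor of every monomial has $i$-coordinate $0$ or $1$, so that $e_i$ kills the current vector term by term. Then $\varepsilon_i(b)=0$ and $f_iG(b)=G(\tilde f_ib)$. The paper's words are two-letter words on orthogonal nodes, plus $(3,1,6,5)$ and $(4,3,5,4)$. They give cube vectors $u_\mu u_{\mu-\beta-\gamma}+u_{\mu-\beta}u_{\mu-\gamma}$ with $\beta\perp\gamma$, at weights $2\mu-\beta-\gamma$ rather than $2\mu-\alpha_i$. The equal coefficients come directly from the derivation rule.

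\emph{Second gap: the covering.} You never show that four such vectors exclude all $32$ (resp.\ $64$) orientations. The choice of certificates is deferred to a search. The $\mathbb F_2$-affineness that would reduce the covering to linear algebra is not proved, and you note yourself that it may fail. The fallback is the exhaustive computation, which you do not carry out. That finite computation is the paper's proof. It gives explicit weights $\mu$ and words, and records each excluded set as a mask; the union of the masks is $2^{32}-1$, resp.\ $2^{64}-1$. Without explicit certificates and this check, the statement is not established.
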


\begin{proof}
Use Bourbaki numbering, with edges
\[
 (1,3),\ (2,4),\ (3,4),\ (4,5),\ (5,6)
 \quad\text{and, in }E_7,\quad (6,7).
\]
Weights below are given by their simple-coroot coordinates.  For $\mu\in W\lambda$ and a word $\mathbf i=(i_1,\ldots,i_s)$, let
\[
 G(\mu;\mathbf i):=f_{i_s}\cdots f_{i_1}u_\mu^2,
\]
considered up to a positive scalar.  The word records the operators in the order in which they act.

The eight vectors in the table are canonical basis vectors up to a positive scalar.  To check this, start with the extremal canonical vector of weight $2\mu$ in the quantum module, embedded in the tensor square of the minuscule module.  At each successive lowering step $F_i$, every factor in every tensor term has $i$-coordinate $0$ or $1$, so $E_i$ annihilates the vector term by term.  Since $F_i$ acts at the top of an $i$-string, it preserves the lower-global basis.  The quantum coproduct contributes powers of $q$ with positive coefficients, so the classical calculations below give the same supports.  Specializing at $q=1$ and multiplying the tensor factors gives $G(\mu;\mathbf i)$.

The top-of-string checks have only three forms.  The two-letter words use orthogonal nodes with initial coordinates $(1,1)$.  The word $(3,1,6,5)$ uses the two orthogonal $A_2$ chains $3$--$1$ and $6$--$5$, with initial coordinates $(1,0)$ on each chain.  Finally, $(4,3,5,4)$ uses the $A_3$ chain $3$--$4$--$5$, with initial coordinates $(0,1,0)$.  These checks also show that each resulting vector has exactly two ambient monomials with equal positive coefficients.  More explicitly, every row has the cube support
\[
 u_\mu u_{\mu-\beta-\gamma}+u_{\mu-\beta}u_{\mu-\gamma},
\]
where $(\beta,\gamma):=(\alpha_i,\alpha_j)$ for a two-letter word $(i,j)$, $(\alpha_1+\alpha_3,\alpha_5+\alpha_6)$ for $(3,1,6,5)$, and $(\alpha_4,\alpha_3+\alpha_4+\alpha_5)$ for $(4,3,5,4)$.  In each case the two roots are orthogonal.  For example, in the last case the first three operators give $f_5f_3f_4(u_\mu^2)=2u_\mu u_{\mu-\gamma}$.  Both factors have $4$-coordinate $1$, so the last operator gives
\[
 f_4f_5f_3f_4(u_\mu^2)
 =2\bigl(u_{\mu-\beta}u_{\mu-\gamma}
          +u_\mu u_{\mu-\beta-\gamma}\bigr).
\]

Set $r=6$ or $7$ in the respective cases.  Index orientations by binary words $b_1\cdots b_{r-1}$ using the displayed order of the edges: $0$ directs the smaller vertex toward the larger, and $1$ reverses that direction.  Set $t:=(b_1\cdots b_{r-1})_2$.  For each vector, let $S$ be the set of indices $t$ for which $\dot c_Q$ gives opposite signs to its two monomials.  The last column records the integer $\sum_{t\in S}2^t$ in hexadecimal.  It thus specifies the subset of the $32$ or $64$ orientations excluded by that vector.

\begin{center}
\small
\setlength{\tabcolsep}{5pt}
\begin{tabular}{ccll}
Type & $\mu$ & $\mathbf i$ & Excluded orientations\\\hline
$E_6$ & $(0,-1,0,1,-1,1)$ & $(4,6)$ & \texttt{64866446}\\
 & $(1,0,-1,1,-1,0)$ & $(1,4)$ & \texttt{0f0e30c1}\\
 & $(0,0,1,-1,0,1)$ & $(3,1,6,5)$ & \texttt{80308338}\\
 & $(0,-1,0,1,0,-1)$ & $(4,3,5,4)$ & \texttt{1c412c00}\\[3pt]
$E_7$ & $(0,1,1,-1,0,0,0)$ & $(2,3)$ & \texttt{f20d0df20d00f200}\\
 & $(1,1,-1,0,0,0,0)$ & $(1,2)$ & \texttt{000d00f2ff0dfff2}\\
 & $(0,1,0,-1,1,-1,1)$ & $(2,7)$ & \texttt{0e0ececece0e0ece}\\
 & $(-1,-1,0,1,0,0,0)$ & $(4,3,5,4)$ & \texttt{03f030030cf00001}\\\hline
\end{tabular}
\end{center}

The following recurrence checks the subsets recorded in the table.  Choose a source order $i_1,\ldots,i_r$ for the orientation and set $c:=c_Q=s_{i_r}\cdots s_{i_1}$.  For $\nu\in W\lambda$, compute
\[
 \nu^{(0)}:=\nu,\qquad
 \nu^{(j)}:=s_{i_j}\nu^{(j-1)}\quad(1\leq j\leq r),\qquad
 \epsilon_c(\nu):=(-1)^{\#\{j:(\nu^{(j-1)})_{i_j}=1\}}.
\]
In simple-coroot coordinates the reflection is $(s_i\nu)_k=\nu_k-\nu_i a_{ik}$, where $(a_{ik})$ is the Cartan matrix.  Thus $\dot c u_\nu=\epsilon_c(\nu)u_{c\nu}$ by~\eqref{eq:standard-tits}.  If the vector has support $u_{\nu_1}u_{\nu_2}+u_{\nu_3}u_{\nu_4}$, the orientation index $t$ belongs to $S$ precisely when
\[
 \epsilon_c(\nu_1)\epsilon_c(\nu_2)\epsilon_c(\nu_3)\epsilon_c(\nu_4)=-1.
\]
This recurrence uses only the given edge list and vectors.  The result is independent of the chosen source order, since any two source orders differ by interchanging consecutive vertices that are not joined by an edge, whose Tits representatives commute.  The union of the four subsets is encoded by $\texttt{ffffffff}=2^{32}-1$ in type $E_6$ and $\texttt{ffffffffffffffff}=2^{64}-1$ in type $E_7$.  Thus every orientation sends at least one canonical vector to a vector whose two nonzero coefficients in the products $u_\mu u_\nu$ have opposite signs.
\end{proof}

\section{Restriction to types \texorpdfstring{$B$ and $C$}{B and C}}\label{sec:folding}

For completeness, we record how the quiver bases restrict to the minuscule representations of types $B$ and $C$.  These representations introduce no new minuscule posets, and the restriction argument is not needed for the uniform result of Section~\ref{sec:csp}.  The relevant inclusions of fixed-point subgroups are:
\[
 B_n\subset D_{n+1}\quad\text{(spin inside half-spin)},
 \qquad
 C_n\subset A_{2n-1}\quad\text{(the vector representation)}.  
\]
Here $n\geq2$, with $D_3$ interpreted as $A_3$; the rank-one case is already of type~$A_1$.  The inclusions refer to the corresponding Lie algebras and simply connected groups.  In either inclusion, write $\widetilde\lambda$ for the highest weight of the indicated ambient representation and $\lambda$ for its restriction to the fixed Cartan subalgebra.  The ambient minuscule representation restricts irreducibly, and its underlying minuscule poset is unchanged; write $P$ for this common poset.  The same holds for the Cartan square $V^{2\widetilde\lambda}$: its highest vector generates a submodule of highest weight $2\lambda$, and the dimension formula~\eqref{eq:dimension}, applied to the common poset, gives equality of dimensions.  For the type~$B$ restriction, these identifications are also recorded by Lax~\cite{Lax}, Proposition~21.

The diagram involution combines the two fork vertices of $D_{n+1}$ into the short-root label of $B_n$, and combines vertices $i$ and $2n-i$ of $A_{2n-1}$ into a label of $C_n$.  Distinct vertices in each orbit are nonadjacent.  With compatible Chevalley generators, the generators of the fixed-point Lie algebra are the sums of the corresponding generators in each orbit.  Generators attached to distinct vertices of an orbit commute, so the folded Tits representative is the product of the Tits representatives in that orbit.  The folded label toggle is likewise the product of the commuting label toggles in its orbit.  

Lift an orientation of the folded diagram to the simply laced diagram.  Expanding each folded label into its orbit converts a folded source sweep into a simply laced source sweep, identifying both their Tits representatives and their Coxeter-motions.  The Coxeter numbers on both sides of each inclusion are $2n$.  Writing $\widetilde\rho^\vee$ and $\rho^\vee$ for the respective half-sums of positive coroots, the identity
\[
 \langle2\widetilde\lambda,\widetilde\rho^\vee\rangle
 =|P|=\langle2\lambda,\rho^\vee\rangle
\]
makes the scalar normalization agree on the two sides.  The simply laced quiver basis, regarded in the restricted Cartan square, therefore satisfies~\eqref{eq:intro-coxeter}.  Thus the normalized lifting formula also holds for these restricted representations.  To construct these basis vectors, one may lift the source order, apply reflect-and-peel in the simply laced diagram, and restrict the resulting cube vectors.

\section*{Acknowledgments}

The author is grateful to GPT-6 Astra for the productive mathematical collaboration that led to this article.  The abstract and introduction were written entirely by the author; the body was written jointly.  The author assumes full responsibility for all content.

The author also thanks Victor Reiner for introducing him, some fifteen years ago, to the work of Robert Proctor and John Stembridge on minuscule posets.

\end{document}